\documentclass[a4paper]{article}
\usepackage[english]{babel}
\usepackage[utf8]{inputenc}
\usepackage[T1]{fontenc}
\usepackage[usenames, dvipsnames]{xcolor}
\usepackage{tikz}
\usepackage{amsthm}
\usepackage{amsmath}
\usepackage{latexsym}
\usepackage{float}
\usepackage{stmaryrd}
\usepackage{fullpage}
\usepackage{multirow}
\usepackage{diagbox}
\usepackage{comment}
\usepackage{amssymb}

\everymath{\displaystyle}

\newtheorem{theorem}{Theorem}
\newtheorem{obs}[theorem]{Observation}
\newtheorem{lemma}[theorem]{Lemma}
\newtheorem{cor}[theorem]{Corollary}

\newtheorem{prop}[theorem]{Proposition}

\newtheorem{claim}{Claim}[theorem]

\newcommand{\outcomeP}{\mathcal{P}}
\newcommand{\outcomeN}{\mathcal{N}}
\newcommand{\grundy}{\mathcal{G}}

\newcommand{\sstar}[1]{S_{#1}}

\newcommand{\app}[3]{#1\tikz[baseline=-4]{\draw (0,0) node[below] {\footnotesize #2};\draw (0,0) node[circle,fill=black,minimum size=0,inner sep=1]{} -- (0.3,0) node[circle,fill=black, minimum size=0, inner sep=1] {};} P_{#3}} 
\newcommand{\csg}[1]{$CSG(#1)$} 

\DeclareMathOperator{\mex}{mex}

\tikzstyle{noeud}=[circle, fill=black, inner sep= 0, minimum size = 4]
\tikzstyle{gros_noeud}=[circle, fill=black, inner sep= 0, minimum size = 8]

\makeatletter
\renewcommand*{\@fnsymbol}[1]{\ensuremath{\ifcase#1 \or * \or \dagger
	 \else\@arabic{\numexpr#1-2\relax}\fi}}
\makeatother

\newcounter{savecntr}
\newcounter{restorecntr}

\title{Connected Subtraction Games on Subdivided Stars\thanks{This work has been supported by the ANR-14-CE25-0006 project of the French National Research Agency.}}

\date{}

\author{Antoine Dailly \thanks{Corresponding author} $^,$\setcounter{savecntr}{\value{footnote}}\thanks{Univ Lyon, Université Lyon 1, LIRIS UMR CNRS 5205, F-69621, Lyon, France.} \and
	Julien Moncel \thanks{LAAS-CNRS, Université de Toulouse, CNRS, Université Toulouse 1 Capitole - IUT Rodez, Toulouse, France.} $^,$\thanks{Fédération de Recherche Maths à Modeler, Institut Fourier, 100 rue des Maths, BP 74, 38402 Saint-Martin d'Hères Cedex, France.} \and
	Aline Parreau \setcounter{restorecntr}{\value{footnote}}%
	\setcounter{footnote}{\value{savecntr}}\footnotemark
	\setcounter{footnote}{\value{restorecntr}}}

\begin{document}
	
	\maketitle
	
	
	

		\begin{abstract}
			The present paper deals with connected subtraction games in graphs, which are generalization of take-away games. In a connected subtraction game, two players alternate removing a connected subgraph from a given connected game-graph, provided the resulting graph is connected, and provided the number of vertices of the removed subgraph belongs to a prescribed set of integers. We derive general periodicity results on such games, as well as specific results when played on subdivided stars.
		\end{abstract}
		
		\textbf{Keywords:}
			Combinatorial Games;
			Subtraction Games;
			Graphs
	
	\section{Connected subtraction games on graphs}
	
	\subsection{General description of the game}
	In this paper, we study \emph{connected subtraction games} on graphs, which are impartial combinatorial games where a player can remove a connected subgraph from a given connected game-graph, provided the moves lead to a new game-graph which remains connected, and provided the number of vertices that have been removed is legal, with respect to a given list of integers that characterizes the game. Such a game will be denoted a $CSG$ game.
	
	More precisely, let $L$ be a set of positive integers and $G$ a connected graph. The game \csg{L} on $G$ is a 2-player game where, starting from $G$, a player can remove a connected subgraph $H$ from the current graph, whenever the number of vertices of $H$ belongs to $L$ and the remaining graph is still connected. The first player unable to play loses the game. See Figure \ref{fig:example} for an example of a \csg{\{1,2,4\}} game.

	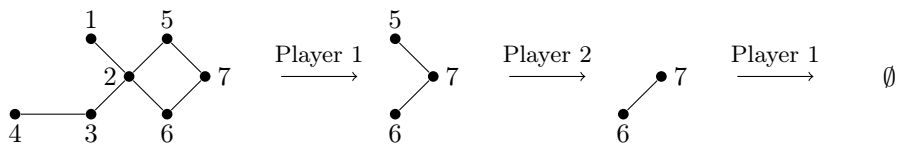
\begin{figure}[h]
		\begin{center}
			\begin{tikzpicture}
			\node[noeud] (1) at (-0.5,0.5) {};
			\draw (1) ++(0,0.25) node {1};
			\node[noeud] (2) at (0,0) {};
			\draw (2) ++(-0.25,0) node {2};
			\node[noeud] (3) at (-0.5,-0.5) {};
			\draw (3) ++(0,-0.25) node {3};
			\node[noeud] (4) at (-1.5,-0.5) {};
			\draw (4) ++(0,-0.25) node {4};
			\node[noeud] (5) at (0.5,0.5) {};
			\draw (5) ++(0,0.25) node {5};
			\node[noeud] (6) at (0.5,-0.5) {};
			\draw (6) ++(0,-0.25) node  {6};
			\node[noeud] (7) at (1,0) {};
			\draw (7) ++(0.25,0) node {7};
			\draw (1)--(2)--(3)--(4);
			\draw (2)--(5)--(7)--(6)--(2);
			
			\path[draw,->] (2,0) to node[above]{\small Player 1} (3,0) ;
			
			\begin{scope}[shift={(3,0)}]
			\node[noeud] (5) at (0.5,0.5) {};
			\draw (5) ++(0,0.25) node {5};
			\node[noeud] (6) at (0.5,-0.5) {};
			\draw (6) ++(0,-0.25) node  {6};
			\node[noeud] (7) at (1,0) {};
			\draw (7) ++(0.25,0) node {7};
			\draw (5)--(7)--(6);
			\end{scope}
			
			\path[draw,->] (5,0) to node[above]{\small Player 2} (6,0) ;
			
			\begin{scope}[shift={(6,0)}]
			\node[noeud] (6) at (0.5,-0.5) {};
			\draw (6) ++(0,-0.25) node  {6};
			\node[noeud] (7) at (1,0) {};
			\draw (7) ++(0.25,0) node {7};
			\draw (7)--(6);
			\end{scope}
			
			\path[draw,->] (8,0) to node[above]{\small Player 1} (9,0) ;
			
			\begin{scope}[shift={(10,0)}]
			\node at (0,0) {$\emptyset$};
			\end{scope}
			
			\end{tikzpicture}
			
		\end{center}
		\caption{An example of a \csg{\{1,2,4\}} game. The first player starts by taking the connected subgraph induced by vertices $\{1,2,3,4\}$. Then the second player answers by taking vertex $5$. Finally, the first player wins by taking the final edge $\{6,7\}$.}
		\label{fig:example}
	\end{figure}

	In some sense, this game is a natural generalization of popular take-away games, that are usually played with heaps of counters, in which a player can remove a number of counters belonging to a given set $L$. In the theory of combinatorial games, this is called a (simple) subtraction game (see, for instance, Chapter 4 in the first volume of \cite{winningways}). From a graph theory point of view, such subtraction games can be seen as games \csg{L} played on paths.
	
	Connected subtraction games on graphs have been addressed in \cite{033}, where \csg{\{1,2\}} is solved for subdivided stars and bistars. In the present paper we extend these results, by providing, on the first hand, insights for such games in general graphs, and, on the other hand, specific results for \csg{\{1,2,3\}} and \csg{\{1,2,4\}} in subdivided stars.
	
	\subsection{Notations for combinatorial games, outcomes and Grundy values}
	
	In this section, we recall basics of combinatorial games that we will need along the paper. For more about combinatorial games, the reader can refer to \cite{LIP, winningways, Siegel}.
	In the classical settings of combinatorial game theory, two players play in turn, without being able to pass. The game is finite (the number of positions is finite and we can not cycle through some positions), there are no random events, and the information is perfect (each player knows what is the situation and what are the possible moves). In addition, both players are identical, in the sense that they have the same available moves (such a game is called {\em impartial}). The first player having no more available moves loses the game (and the other player wins). Note that such games admit no draws.
	
	In this setting, a given game is either a winning position or a losing position for the first player. We can recursively define losing and winning positions as follows. A given game is a losing position if it is either a game in which there are no available moves, or if it is a game in which each possible move leads to a winning position. A given game is a winning position if there exists a move leading to a losing position. A losing position is denoted as a $\outcomeP$-position, whereas a winning one is denoted as a $\outcomeN$-position.
	
	We can also define recursively the Grundy value of a given game $G$, denoted $\grundy(G)$, as 0 if there are no possible moves, and as $\mex\{\grundy(G_1),\ldots,\grundy(G_k)\}$ otherwise, where $G_1,\ldots,G_k$ denote all the options of $G$ that are all the games we may obtain after playing one move on $G$. Recall that the $\mex$ of a finite sequence of nonnegative integers is defined as the smallest nonnegative integer not belonging to the sequence. Note that it follows from the definition that $G$ is a losing position if, and only if, $\grundy(G)=0$. Winning positions correspond to games $G$ for which we have  $\grundy(G)>0$. Indeed, a game has Grundy value $g>0$ if, and only if, no move leads to a game having Grundy value $g$, and for any $0\leq g'<g$, there exists a move leading to a game having Grundy value $g'$.
	Therefore, the Grundy value is a refinement of the notion of outcome, i.e winning or losing positions. We generally consider that a game is completely solved if we know its Grundy value.
	
	Grundy values are particularly useful when dealing with games that can be decomposed as sum of games. The sum of two games $G_1$ and $G_2$, denoted $G_1+G_2$, is the new game where a player can either play on $G_1$ (with any legal move on that game) or play on $G_2$ (with any legal move on that game).
	
	It turns out that the Grundy value of a sum of games can be rather easily computed from the Grundy values of the games. It is easy to see that, for instance, $\grundy(G+G)=0$ for any game $G$. Indeed, we can show recursively that any move the first player makes on $G+G$ can be mimicked by the second player so as to leave a game $G'+G'$. Since there are no moves in $\emptyset+\emptyset$, this shows that $\grundy(G+G)=0$. More generally, the so-called Sprague-Grundy theorem \cite{Sprague} states that the binary value of $\grundy(G_1+G_2)$ can be computed as $\grundy(G_1)\oplus\grundy(G_2)$, where $\oplus$ denotes the bitwise XOR of the binary values of both games. For instance, the Grundy value of $G_1+G_2$ is 3 if $\grundy(G_1)=5$ and $\grundy(G_2)=6$, since $5_2=101$, $6_2=110$, and $101\oplus 110=011$, that is to say 3. A consequence of the Sprague-Grundy theorem is that two games $G_1$ and $G_2$ have the same Grundy value if and only if $G_1+G_2$ is a losing position. In this case we say that $G_1$ and $G_2$ are equivalent, denoted by $G_1 \equiv G_2$. We will often use this result to compute the Grundy value of a game.
	
	In this paper, we will denote by $\grundy_L(G)$ the Grundy value of the game \csg{L} played on a graph $G$. If the context is clear, we will simply write $\grundy(G)$.
	
	\subsection{Literature review}
	
	It is well-known (see for instance \cite{winningways}) that any simple subtraction game with a finite set $L$ is ultimately periodic, in the sense that there exists a period $T>0$ and an integer $k_0$ such that, for any $k\geq k_0$, we have $\grundy(G_{k+T})=\grundy(G_{k})$, where $G_k$ denotes the game played with a heap of $k$ counters. Such ultimately periodic sequences of integers can be described using the $\overline{x_1,\ldots,x_k}$ notation, where $\overline{x_1,\ldots,x_k}$ denotes the infinite sequence $x_1,\ldots,x_k,x_1,\ldots,x_k,\ldots$. For instance, it is well-known and easy to check that $(\grundy(G_k))_{k\in \mathbb N}=\overline{0123}$ for the subtraction game with $L=\{1,2,3\}$, meaning that, for instance, $\grundy(G_2)=2$ and $\grundy(G_4)=0$ (the sequence starts with the value corresponding to $k=0)$. Note that, in general, the sequence is ultimately periodic, meaning we may have a preperiod \cite[Chapter 4]{winningways}. For instance, for the subtraction game with $L=\{2,4,7\}$, the Grundy sequence has preperiod 8 and period $3$, since we have:
	
	$$(\grundy(G_k))_{k\in\mathbb N}=00112203\overline{102}.$$
	
	Subtraction games are take-away games\footnote{Also called Nim-like games, or octal games.}, which is the family of games that includes more or less all combinatorial games plays on heaps of counters where players remove counters with some specific rules. The ultimate periodicity of such games has been conjectured  by Guy \cite{Guy96} and is a key problem in Combinatorial Game Theory. See \cite{winningways} for more details on take-away games and their periodicity.
	
	There exist several generalizations of take-away games in graphs. In \cite{Nim-graphs, Nim-graphs2}, the edges of the graphs are labeled with an integer and a token is moved along the edges. In Node-Kayles \cite{Node-Kayles}, a move consists in removing a vertex and all its neighbours from the graph. 
	If instead of removing vertices players remove edges, we obtain Arc-Kayles \cite{Node-Kayles}.
	In the game Grim defined and studied in \cite{Grim}, a move consists in removing a vertex, deleting its adjacent edges, and deleting the resulting isolated vertices. Finally, a general definition of octal games on graphs and specifically of subtraction games has been recently given in \cite{033}. 
	
	A natural question that arises in these generalizations in graphs is whether the (ultimate) periodicity of the Grundy sequences that appear in classical take-away games (that can be considered as a particular case of connected subtraction games played on paths) is still valid for more complicated graphs. This question has been studied for Node-Kayles \cite{fleischer} and Arc-Kayles \cite{Huggan} in the particular class of subdivided stars (which is the simplest generalization of paths). For both games, they are able to prove that the sequence is still (ultimately) periodic for specific subdivided stars with three rays. Huggan and Stevens \cite{Huggan} have conjectured that this is true for Arc-Kayles for all subdivided stars with three rays, one of which is of size~1, whereas Fleischer and Trippen \cite{fleischer} proved that this is not true for Node-Kayles.
	As for the game \csg{\{1,2\}} on graphs, it is proved in \cite{033} that it is periodic in all subdivided stars and bistars.
	More precisely, if we denote by $\sstar{\ell_1,\ldots,\ell_k}$ the subdivided star obtained by appending to a single vertex $k$ finite paths of length $\ell_1,\ldots,\ell_k$, it is shown in \cite{033} that
	$$\grundy_{1,2}(\sstar{\ell_1,\ldots,\ell_k}) = \grundy_{1,2}(\sstar{\ell_1 \bmod 3,\ldots,\ell_k \bmod 3})$$ for all $\ell_1,\ldots,\ell_k$. In other words, adding three vertices to any path of a subdivided star does not change its Grundy value. Note that this value 3 is also equal to the period of the subtraction game with $L=\{1,2\}$. Similar results for subdivided bistars are also derived in \cite{033}, where a bistar is a graph obtained by connecting with a path the centers of two subdivided stars. 
	
	\subsection{Outline of the paper}
	In this paper we are interested in deriving general results on $CSG$ games, with a particular emphasis on the correlation of the structure of the graphs with the structure of the Grundy values.
	
	Most of our results are about the behaviour of the Grundy value when one appends a path of varying size to a vertex of a graph. Let $G$ be a graph, $u$ one of its vertices and $k$ a positive integer. We denote by $\app{G}{u}{k}$ the graph $G$ where a path $P_k$ of $k$ vertices is appended to $u$ ($u$ not being a part of this path).
	We extend this notation to the case where $G$ is empty, by setting in this case $\app{G}{u}{k}=P_k$. We are interested by the periodicity of the function $$\begin{array}{ccccc}
	f_{L,G,u} & : & \mathbb{N} & \to & \mathbb{N} \\
	& & k & \mapsto & \grundy_L{(\app{G}{u}{k})}. \\
	\end{array}$$ for various sets $L$.

	In Section~\ref{sec:generalperiod}, we give results for any subtraction game on a finite set. In particular, we extend the result of ultimate periodicity on paths to any graph. In other words, the function $f_{L,G,u}$ is ultimately periodic for any finite set $L$, graph $G$ and vertex $u$ of $G$. 
	In the rest of the paper, we aim to prove periodicity without preperiod.
	In Section~\ref{sec:123}, we derive general results for the game \csg{\{1,2,\ldots,N\}}, that is the game where any connected subgraph of size up to $N$ can be removed. These results consider general graphs and subdivided stars to which paths are appended. They allow us to solve the particular case \csg{\{1,2,3\}} in subdivided stars.
	In Section~\ref{sec:124}, we derive periodicity results for \csg{\{1,2,4\}} in subdivided stars.

	\section{General results on finite subtraction games}\label{sec:generalperiod}

	As said in the introduction, simple subtraction games with a finite set $L$ have ultimately periodic Grundy sequences (see Chapter 4 of \cite{Siegel} for a proof). Using our terminology, this means that if $L$ is finite, the function $f_{L,G,u}$ is ultimately periodic when $G$ is a path and $u$ is an endpoint of $G$. The main argument to prove this result is that when a path is long enough, all the moves in $L$ are possible and the number of moves is finite. Thus the Grundy value of a long path is the mex over a finite set. This set can only have a finite number of values which proves the periodicity.  We extend this argument to any graph and any vertex.
	
	\begin{theorem}\label{thm:general}
		Let $L$ be a finite set of positive integers, $G$ a graph and $u$ a vertex of $G$. Then the function $f_{G,L,u}$ is ultimately periodic. 
	\end{theorem}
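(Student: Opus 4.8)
The plan is to extend the classical path argument by induction on the size of the ``head'' of the graph, reducing everything to a finite-state recurrence at the far end of the appended path. Fix $M=\max L$ and let $\mathcal{S}$ denote the (finite) set of all connected induced subgraphs $G'$ of $G$ containing $u$. For each $G'\in\mathcal{S}$ I would study the sequence $g_{G'}(k):=\grundy_L(\app{G'}{u}{k})$ and prove it is ultimately periodic, by induction on $|V(G')|$. The base case is $G'=\{u\}$, where $\app{\{u\}}{u}{k}=P_{k+1}$ is a path, so $g_{\{u\}}$ is exactly the Grundy sequence of the classical subtraction game on $L$, already known to be ultimately periodic; taking $G'=G$ at the end yields the ultimate periodicity of $f_{G,L,u}$.

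For the inductive step, the first task is to classify the legal moves on $\app{G'}{u}{k}$ for $k>M$. Labelling the appended path $v_1\cdots v_k$ with $v_1$ adjacent to $u$, any connected subgraph $H$ with $|V(H)|\in L$ whose removal keeps the graph connected falls into exactly one of three types: (Type 1) $H\subseteq G'$ with $u\notin H$, where connectivity of the remainder forces $G'\setminus H$ to be connected, leaving $\app{G'\setminus H}{u}{k}$; (Type 2) $H$ contains $u$, where one shows that connectivity forces $H$ to contain all of $G'$ together with a prefix $v_1,\ldots,v_i$ of the path (the suffix $v_{i+1},\ldots,v_k$ can reach $G'$ only through $u$), leaving a pure path $P_{k-i}$; and (Type 3) $H$ lies in the path and avoids $u$, where connectivity forces $H$ to be a suffix $v_{k-\ell+1},\ldots,v_k$, leaving $\app{G'}{u}{k-\ell}$. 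This classification is the heart of the argument and, I expect, the main obstacle: the delicate point is to argue rigorously that removing an internal chunk, or a prefix, of the path always disconnects the graph, so that the only interaction with the long tail is the removal of a suffix of length $\ell\in L$, exactly as on a path.

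Granting the classification, the recurrence reads
$$g_{G'}(k)=\mex\Big(A(k)\cup\{\,g_{G'}(k-\ell):\ell\in L,\ \ell\le k\,\}\Big),$$
where $A(k)$ collects the Grundy values of the Type 1 options $g_{G''}(k)$ (finitely many heads $G''$, each a \emph{strict} subgraph of $G'$, hence ultimately periodic by the induction hypothesis) together with the Type 2 options $\grundy_L(P_{k-i})$ (ultimately periodic path values). Consequently $A$ is an ultimately periodic sequence of finite integer sets, of bounded cardinality and bounded entries. I would then observe that $g_{G'}$ is itself bounded, since $g_{G'}(k)\le |A(k)|+|L|$ is at most a constant depending only on $G'$ and $L$ (the number of Type 1 and Type 2 options does not grow with $k$).

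Finally I would run the pigeonhole finish that powers the path case. Let $p$ be a period of $A$. For large $k$, the value $g_{G'}(k)$ is determined by the state $\big((g_{G'}(k-M),\ldots,g_{G'}(k-1)),\,k\bmod p\big)$, and the successor state is a deterministic function of the current one. Since the $g_{G'}$-values are bounded and $A$ is periodic, this state ranges over a finite set, so the sequence of states is eventually periodic; hence $g_{G'}$ is eventually periodic, closing the induction and establishing the theorem.
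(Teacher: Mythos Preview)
Your proposal is correct and follows essentially the same route as the paper: induction on the size of the head $G'$, the same three-type classification of moves (tail suffix, head-only with $u$ surviving, and $u$ removed forcing all of $G'$ plus a path prefix to go), a uniform bound on the Grundy values from the bounded option count, and a pigeonhole/finite-state argument on blocks of $M$ consecutive values synchronized with the common period of the lower-level sequences. The only cosmetic difference is packaging: you phrase the finish as a deterministic finite-state machine on $\big((g_{G'}(k-M),\ldots,g_{G'}(k-1)),\,k\bmod p\big)$, while the paper explicitly finds two equal $|L|$-tuples at indices congruent modulo the lcm of the inductive periods and then verifies the recurrence propagates; these are the same argument.
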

	
	\begin{proof}
		We proceed by induction on $|G|$, where $|G|$ denotes the number of vertices of $G$. The case $|G|=1$ corresponds to the path and to simple subtraction games.
		
		Let $G$ be a graph with at least two vertices and $u$ be a vertex of $G$. Let $G'\neq G$ be a connected subgraph of $G$ containing $u$. Assume that the function $f_{G',L,u}$ is ultimately periodic with period $T(G')$ and preperiod $k(G')$. Let $k_0$ and $T_0$ be the preperiod and the ultimate period of the Grundy sequence of the subtraction game with set $L$. Let $T$ be the lcm of $T_0$ and all the values $T(G')$, with $G'\neq G$ any connected subgraph of $G$ containing $u$, and let $k_{max}$ be the maximum of the preperiods among the preperiod $k(G')$ and $k_0$.
		
		When $k\geq \max{L}$, then the set of available moves is fixed, and there are three kinds of moves from the position $\app{G}{u}{k}$:
		\begin{enumerate}
			\item playing to $\app{G}{u}{k-i}$ with $i\geq1$ (at most $|L|$ moves);
			\item playing to $\app{G'}{u}{k}$, with $G'\neq G$ a connected subgraph of $G$ containing $u$ (at most $2^{|G|-1}$ moves);
			\item playing to $P_{k-i}$ with $i\geq 1$ (at most $|L|$ moves).
		\end{enumerate}
		
		Thus, the total number of moves is bounded by a constant $M$ only depending of $L$ and $G$ (but not depending of $k$). Thus the Grundy value of $\app{G}{u}{k}$, $f_{G,L,u}(k)$, is bounded by $M$.
		
		Let $k\in \mathbb N$ and let $A(k)\in \{0,\ldots,M\}^{|L|}$ be the $|L|$ consecutive values of the Grundy values of $\app{G}{u}{i}$ from $i=k+1$ to $i=k+|L|$: $$A(k)=(f_{G,L,u}(k+1),f_{G,L,u}(k+1),...,f_{G,L,u}(k+|L|)).$$
		
		The vector $A(k)$ is an $|L|$-uplet with values in $\{0,...,M\}$. Since there are a finite number of such $|L|$-uplets, and since the number of such vectors $A(k)$ is infinite, then there exist $k_1$ and $k_2$ such that $A(k_1)=A(k_2)$. Moreover, we can assume that $k_{max}+|L|\leq k_1 \leq k_2$ and that $k_1\equiv k_2 \equiv 0 \bmod{T}$. Indeed, there are also infinitely many vectors $A(k)$ with $k\geq k_{max}+|T|$ and $k\equiv0\bmod{T}$, hence at least two of them must be equal.
		
		Let $T_f=k_2-k_1$. We prove by induction that for any $k> k_1$, $f_{G,L,u}(k+T_f)=f_{G,L,u}(k)$ which will conclude the proof. By definition of $k_1$ and $k_2$, the result is true for $k_1< k \leq k_1+|L|$. Let $k>k_1+|L|$ and assume that the result is true for the $|L|$ values preceding $k$. We prove that this is still true for $k$.
		Remembering that, since $k \geq \max L$, then the set of available moves is fixed, and the value $f_{G,L,u}(k+T_f)$ is the mex of the following values:
		\begin{enumerate}
			\item $f_{G,L,u}(k+T_f-i)$ with $\leq i\leq |L|$;
			\item $f_{G',L,u}(k+T_f)$, with $G'\neq G$ a connected subgraph of $G$ containing $u$;
			\item $\grundy(P_{k+T_f-i})$ with $1\leq i\leq |L|$.
		\end{enumerate}
		
		By induction hypothesis, $f_{G,L,u}(k+T_f-i)=f_{G,L,u}(k-i)$.
		Furthermore, $T_f$ is a multiple of $T$ and $k \geq k_{max}$, and thus $f_{G',L,u}(k+T_f)=f_{G',L,u}(k)$ and $\grundy(P_{k+T_f-i})=\grundy(P_{k-i})$.
		Finally, since the set of available moves from $\app{G}{u}{k+T_f}$ and from $\app{G}{u}{k}$ are identical, then $f_{G,L,u}(k+T_f)$ and $f_{G,L,u}(k)$ are actually both a mex computed on the same set of values, and thus are equal.
	\end{proof}
	
	The preperiod and the period obtained in Theorem \ref{thm:general} can be arbitrary large, even compared to the period of the Grundy sequence of the subtraction game played on a path. However, in all the particular cases we have considered, the period is the same than for the path. We wonder if this is true for all subtraction games. Concerning the preperiod, for some simple subtraction games there is no preperiod and the Grundy sequence is purely periodic. This is the case for example when the set $L$ contains all the integers from $1$ to $N$. In the rest of the paper, we aim at proving "pure periodicity" theorems.
	
	
	Some of our results are using the following lemma.
	
	\begin{lemma}\label{lem:general}
		Let us consider the game $CSG(L)$, with $L$ a finite set, played on some family of graphs $\mathcal{F}$ which has the property that the Grundy value of the game when played on any graph $G$ of $\mathcal{F}$ depends only on the cardinality of $G$ modulo a given period. More precisely, let us assume that there exists $T>0$ and integers $\alpha_0,\ldots,\alpha_{T-1}$ satisfying $\{\alpha_0,\ldots,\alpha_{T-1}\} = \{0,\ldots,T-1\}$ such that $\grundy(G)=\alpha_{\Vert G\Vert}$ for any $G \in \mathcal{F}$, with $\Vert G\Vert$ defined as $|G| \bmod T$.
		Let $\mathcal{F}'$ be a family of graphs such that:
		\begin{itemize}
			\item any legal move played on a graph $G \in \mathcal{F}'$ leads to a graph of $\mathcal{F}$
			\item the set of legal moves played on a graph $G \in \mathcal{F}'$ satisfies
			\begin{equation}\label{eq:legal_moves}
			\{k \bmod T \mid \mbox{there exists a legal move removing } k \mbox{ vertices to } G\} = \{1,\ldots,T-1\}
			\end{equation}
		\end{itemize}
		Then, for any graph $G \in \mathcal{F}'$, we also have $\grundy(G)=\alpha_{\Vert G\Vert}$.
	\end{lemma}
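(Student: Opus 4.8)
The plan is to compute $\grundy(G)$ directly as a $\mex$, with no induction required: by the first hypothesis every option of a graph $G \in \mathcal{F}'$ already lies in $\mathcal{F}$, so its Grundy value is known explicitly. Fix $G \in \mathcal{F}'$ and recall $\grundy(G) = \mex S$, where $S$ is the set of Grundy values of the options of $G$. Every option arises from a legal move removing some number $k$ of vertices, and the resulting graph $G'$ satisfies $|G'| = |G| - k$ and $G' \in \mathcal{F}$; hence $\grundy(G') = \alpha_{(|G|-k)\bmod T}$. Thus $S = \{\alpha_{(|G|-k)\bmod T} \mid k \text{ is a legal move size on } G\}$.

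The first real step is to identify which residues $(|G|-k) \bmod T$ occur. Writing $j = k \bmod T$, we have $(|G|-k) \bmod T = (\Vert G\Vert - j) \bmod T$. By hypothesis~(\ref{eq:legal_moves}), $j$ ranges exactly over $\{1,\ldots,T-1\}$ as $k$ ranges over the legal move sizes. Since $j \mapsto (\Vert G\Vert - j) \bmod T$ is a bijection of $\mathbb{Z}/T\mathbb{Z}$, letting $j$ run through $\{1,\ldots,T-1\}$ produces every residue except the one obtained at $j=0$, namely $\Vert G\Vert$. Hence the index set is exactly $\{0,\ldots,T-1\} \setminus \{\Vert G\Vert\}$, independently of how many distinct legal moves happen to share a given residue (they contribute the same $\alpha$-value and so do not enlarge $S$).

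The second step uses the permutation hypothesis. Since $\{\alpha_0,\ldots,\alpha_{T-1}\} = \{0,\ldots,T-1\}$, applying $\alpha$ to this index set gives $S = \{\alpha_r \mid r \neq \Vert G\Vert\} = \{0,\ldots,T-1\} \setminus \{\alpha_{\Vert G\Vert}\}$. It remains to evaluate the $\mex$: the set $S$ contains every integer of $\{0,\ldots,T-1\}$ except $\alpha_{\Vert G\Vert}$, so in particular it contains $0,1,\ldots,\alpha_{\Vert G\Vert}-1$ but not $\alpha_{\Vert G\Vert}$, whence $\mex S = \alpha_{\Vert G\Vert}$. This yields $\grundy(G) = \alpha_{\Vert G\Vert}$ for every $G \in \mathcal{F}'$, as claimed.

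The argument is in essence a bookkeeping computation, and I do not expect a genuine obstacle; the only point requiring care is to show clearly that the two hypotheses on $\mathcal{F}'$ are precisely what make the $\mex$ collapse to $\alpha_{\Vert G\Vert}$. Condition~(\ref{eq:legal_moves}) is exactly what forces the options to cover all residues except $\Vert G\Vert$, and the permutation property of $\alpha$ is exactly what guarantees that the single Grundy value missing from the options equals $\alpha_{\Vert G\Vert}$, which is thereby forced to be the $\mex$. I would write the proof so as to make visible where each hypothesis is invoked, since dropping either one breaks the conclusion.
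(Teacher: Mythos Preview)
Your proof is correct and follows essentially the same approach as the paper: both compute $\grundy(G)$ directly as the $\mex$ of the option values, use the hypothesis on $\mathcal{F}$ to reduce these to $\alpha_{(\Vert G\Vert - k)\bmod T}$, invoke condition~(\ref{eq:legal_moves}) to see that the residues hit are exactly $\{0,\ldots,T-1\}\setminus\{\Vert G\Vert\}$, and then use the permutation property of the $\alpha_i$ to conclude. Your write-up is a bit more explicit about the bijection and the $\mex$ computation, but the argument is the same.
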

	
	\begin{proof}
		Let us denote $H^k$ the set of graphs of $\mathcal{F}$ that are obtained after playing any legal move consisting in removing $k$ vertices to a graph $G$ of $\mathcal{F}'$. By definition of $\mathcal{F}$, we have $\grundy(H) = \alpha_{\Vert H \Vert}$ for any $H \in H^k$. Since $|H| = |G|-k$, then we have $\grundy(H) = \alpha_{\Vert G \Vert-k \bmod T}$  for any such graph. Now, since~(\ref{eq:legal_moves}) is assumed to be true, we have
		\begin{eqnarray*}
			\grundy(G) &=& \mex\{\grundy(H)\mid H\mbox{ is obtained by a legal move from }G\}\\
			&=& \mex\{\alpha_{\Vert G \Vert-k\bmod T} \mid \mbox{there exists a legal move removing }k\mbox{ vertices to }G\}\\
			&=&\mex\{\alpha_{\Vert G \Vert-1\bmod T},\alpha_{\Vert G \Vert-2\bmod T},\ldots,\alpha_{\Vert G \Vert-T+1\bmod T}\}
		\end{eqnarray*}
		Since we assumed moreover that $\{\alpha_0,\ldots,\alpha_{T-1}\} = \{0,\ldots,T-1\}$, we conclude that $\grundy(G)=\alpha_{\Vert G \Vert}$, which is the desired result.
	\end{proof}
	
	By inductively using this lemma, we will derive periodicity results for some specific families of subdivided stars, that will be useful for the study of $CSG(\{1,3\})$ and $CSG(\{1,2,4\})$ in subdivided stars.
	
	In the sequel, this lemma will mainly be used in the special case where $\alpha_i=i$ for all $i$. In other words, $\mathcal{F}$ is such that $\grundy(G)=|G| \bmod T$ for all $G \in \mathcal{F}$.

	We end this section with a simple observation.
	
	\begin{obs}\label{obs:|V|_bound}
		For any graph $G$ and any set $L$ of integers, we have $\grundy_{L}(G) \leq |G|$.
	\end{obs}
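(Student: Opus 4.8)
For any graph $G$ and any set $L$ of integers, $\grundy_L(G) \leq |G|$.

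The plan is to bound the Grundy value by the number of available options, since the mex of any set of $k$ nonnegative integers is at most $k$. I would proceed by induction on $|G|$. The base case $|G| = 0$ (the empty graph) gives $\grundy_L(G) = 0 = |G|$ directly, since there are no moves and the mex of the empty set is $0$.

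For the inductive step, the key observation is that from a position on a graph $G$, every legal move removes a connected subgraph of size $k \in L$ with $k \geq 1$, leaving a graph $H$ with $|H| = |G| - k \leq |G| - 1$. By the induction hypothesis applied to each option $H$, we have $\grundy_L(H) \leq |H| \leq |G| - 1$. Thus every option of $G$ has Grundy value in the set $\{0, 1, \ldots, |G|-1\}$. Since $\grundy_L(G)$ is the mex of these option values, and the mex of any subset of $\{0, 1, \ldots, |G|-1\}$ is at most $|G|$ (the mex of a set avoiding no value below $|G|$ is exactly $|G|$, and it cannot exceed this because $|G|$ itself would be the first missing value once all smaller values are present), we conclude $\grundy_L(G) \leq |G|$.

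I expect no genuine obstacle here, as this is a standard bound: the mex of a set all of whose elements are at most $|G|-1$ is at most $|G|$. The only point requiring mild care is to confirm that every move strictly decreases the vertex count, which holds because $L$ consists of positive integers, so at least one vertex is always removed. This guarantees the induction is well-founded and that the option values all lie below $|G|$.
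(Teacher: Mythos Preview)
Your proof is correct and follows essentially the same approach as the paper: induction on $|G|$, with the observation that every option $H$ satisfies $|H| < |G|$, so by induction $\grundy_L(H) < |G|$, and hence the mex is at most $|G|$. The paper's argument is slightly more terse but identical in structure.
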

	
	\begin{proof}
		We prove the result by induction on $G$. This is true if $G$ is empty since the Grundy value of the empty graph is 0.
		Let $G'$ be a graph obtained after a legal move on $G$. $G'$ has strictly less vertices than $G$. Thus by induction, $\grundy_L(G')\leq |G'|<|G|$. Finally, since $\grundy_L(G)=\mex\{\grundy_L(G') | G' \text{ option of } G\}$, we have $\grundy_L(G)\leq |G|$.
	\end{proof}
	
	\section{Connected subtraction games where we can remove up to $N$ vertices}\label{sec:123}
	
	For a general finite set $L$, the period and the preperiod of the Grundy sequence of Theorem \ref{thm:general} can be very large, as it is already the case for paths. 
	Since the subtraction game on paths for the set $L=\{1,...,N\}=I_N$ is very simple, we focus on this game in this section and give periodicity results with small period, and without --- or with a very short --- preperiod. We first give general results, valid for any $N$, and any graph $G$. Then we focus on two families of subdivided stars and prove results that are still valid for any $N$. Finally, we apply these results to $N=3$ and completely solve it for subdivided stars. Note that the case $N=2$ is solved in \cite{033}.

	\subsection{Results on general graphs}
	
	We first recall the Grundy sequence of the game \csg{I_N} on paths:

	\begin{lemma}[{\em Folklore}]\label{lem:In-paths}
		Let $N$ and $k$ be two integers. We have $\grundy_{I_N}(P_k) = k \bmod (N+1)$, i.e. $(\grundy_{I_N}(P_k))_{k\in\mathbb N} = \overline{012\ldots N}$, where $P_k$ denotes the path on $k$ vertices.
	\end{lemma}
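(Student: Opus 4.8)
The plan is to reduce the game on a path to the classical one-dimensional subtraction recurrence and then argue by induction on $k$. First I would analyze the structure of legal moves. Any connected subgraph of the path $P_k = v_1 v_2 \cdots v_k$ is a subpath $v_a \cdots v_b$, and deleting it leaves the remaining graph connected only when $a = 1$ or $b = k$; removing an interior block $v_a \cdots v_b$ with $1 < a \le b < k$ would split $P_k$ into two nonempty paths and is therefore illegal. Hence every legal move erases $i$ vertices from one of the two ends, where $1 \le i \le \min(N,k)$, the upper bound $k$ accounting for the move that removes the whole path when $k \le N$. In each case the resulting position is again a path $P_{k-i}$, so the options of $P_k$ are exactly $\{P_{k-i} : 1 \le i \le \min(N,k)\}$, which is precisely the move set of the simple subtraction game with $L = I_N$.

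Then I would prove $\grundy_{I_N}(P_k) = k \bmod (N+1)$ by induction on $k$. The base case $k = 0$ is immediate, since the empty graph admits no move and has Grundy value $0$. For the inductive step, assuming the formula for all shorter paths, the recurrence together with the induction hypothesis gives
\[
\grundy_{I_N}(P_k) = \mex\{(k-i) \bmod (N+1) : 1 \le i \le \min(N,k)\}.
\]
The heart of the argument is the evaluation of this $\mex$, which I would split into two cases. When $k \ge N$, the indices $k-1, k-2, \ldots, k-N$ are $N$ consecutive integers, whose residues modulo $N+1$ are the $N$ distinct values $\{0,1,\ldots,N\} \setminus \{k \bmod (N+1)\}$; the single missing residue is $k \bmod (N+1)$, so the $\mex$ is exactly $k \bmod (N+1)$. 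When $1 \le k \le N$, the indices run through $0, 1, \ldots, k-1$, all strictly below $N+1$, so the option values are precisely $0, 1, \ldots, k-1$ and the $\mex$ is $k$, which equals $k \bmod (N+1)$ since $k \le N$. Both cases yield the claimed value, completing the induction.

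I do not expect a serious obstacle here, as this is a folklore computation. The only points requiring care are the justification that removing an interior subpath is illegal — which is exactly what forces the moves to mimic the one-dimensional subtraction game — and the boundary bookkeeping when $k \le N$, where the whole path may be removed in a single move.
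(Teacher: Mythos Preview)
Your proof is correct and is the standard argument for this well-known fact. The paper itself does not supply a proof for this lemma, marking it as \emph{Folklore}; your induction on $k$ together with the observation that removing an interior subpath disconnects $P_k$ is exactly the expected justification.
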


	We now give some results on $\grundy_{I_N}(G)$ that are just depending on the number of vertices of $G$, when this last one is small enough. We will generalize this result when considering some specific stars (see Lemma \ref{lem:01s1kl}).

	\begin{lemma}\label{lem:sizeOfGraph}
		For any graph $G$ and any integer $N\geq 1$, we have $\grundy_{I_N}(G) = |G|$ if $|G|\in \{0,1\}$, $\grundy_{I_N}(G) \geq 2$ if $|G|\in \{2,\ldots,N\}$, $\grundy_{I_N}(G) = 0$ if $|G|=N+1$, and $\grundy_{I_N}(G) = 1$ if $|G|=N+2$.
	\end{lemma}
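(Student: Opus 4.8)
The plan is to establish the four cases in increasing order of $|G|$, so that when analysing a graph of a given size I may invoke the conclusions already proved for all strictly smaller connected graphs. Beyond the $\mex$-characterisation of the Grundy value, the only tools I need are the values from the first case and one standard structural fact: every connected graph on at least two vertices has a \emph{non-cut vertex}, i.e.\ a vertex $v$ with $G-v$ connected (take $v$ to be a leaf of a spanning tree of $G$). This fact is what lets me certify that certain subtraction moves are legal, since removing $G-v$ is allowed precisely when $G-v$ is connected and $|G|-1\in I_N$.

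For $|G|\in\{0,1\}$ the claim is immediate: the empty graph has no move and value $0$, while a single vertex has its own removal as unique option, giving $\grundy_{I_N}(G)=\mex\{0\}=1$. For $|G|\in\{2,\dots,N\}$ I want $\grundy_{I_N}(G)\ge 2$, for which it suffices to exhibit one option of value $0$ and one of value $1$. Removing all of $G$ is legal because $G$ is connected and $|G|\le N$, and leaves the empty position of value $0$ (this is the terminal move illustrated in Figure~\ref{fig:example}). Removing $G-v$ for a non-cut vertex $v$ is legal because $G-v$ is connected and $|G|-1\in\{1,\dots,N-1\}\subseteq I_N$, and it leaves a single vertex, of value $1$ by the first case. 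Hence both $0$ and $1$ occur among the options and $\mex\ge 2$.

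For $|G|=N+1$ I want $\grundy_{I_N}(G)=0$, i.e.\ that no option has value $0$. Any legal move removes $k\in\{1,\dots,N\}$ vertices and leaves a connected graph of size $N+1-k\in\{1,\dots,N\}$; by the first two cases every connected graph of size in $\{1,\dots,N\}$ has value at least $1$, so $\mex=0$. For $|G|=N+2$ I want $\grundy_{I_N}(G)=1$, i.e.\ that some option has value $0$ and none has value $1$. Removing a single non-cut vertex leaves a connected graph of size $N+1$, of value $0$ by the previous case, giving the needed option. Moreover every option leaves a connected graph of size $N+2-k\in\{2,\dots,N+1\}$, and such sizes give value $0$ (size $N+1$) or value at least $2$ (sizes $2,\dots,N$), so no option has value $1$ and $\mex=1$.

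I do not anticipate a genuine difficulty here; the care required is purely in the bookkeeping. The essential point is that the whole argument depends on the earlier cases holding for \emph{all} connected graphs and not merely for paths, since the options reached by a move are arbitrary connected graphs; this is why the cases must be proved uniformly over the entire graph family and in increasing order of $|G|$. The remaining vigilance is to verify connectivity whenever a move is declared legal (exactly where the non-cut-vertex fact is invoked) and to track precisely the attainable residual sizes $N+1-k$ and $N+2-k$ in the last two cases, since the conclusions $\mex=0$ and $\mex=1$ hinge on which Grundy values can and, crucially, cannot appear among the options.
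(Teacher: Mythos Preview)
Your proof is correct and follows essentially the same route as the paper's: treat the four size ranges in order, use a non-cut vertex (the paper picks an endpoint of a diameter, you pick a leaf of a spanning tree) to produce the options of value $0$ and $1$ in the range $\{2,\dots,N\}$, and then read off the $\mex$ in the cases $|G|=N+1$ and $|G|=N+2$ from the values already established for smaller graphs.
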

	
	\begin{proof}
		There is no possible move in the empty graph and since $1\in I_N$, the only possible move from a single vertex is to take it to play to the empty graph. Thus it is clear that $\grundy_{I_N}(G)=|G|$ for $G$ with at most one vertex.
		
		Assume now that $|G|\in \{2,\ldots,N\}$. There exists a vertex $u$ of $G$ such that $G-u$ is connected (take, for instance, $u$ as an endpoint of a diameter of $G$). Then removing the whole graph or removing $G-u$ are legal moves leading to a graph with at most one vertex. Thus $\grundy_{I_N}(G)\geq 2$.
		
		Now, if $G$ has $N+1$ vertices, then every legal move leads to a graph $G'$ having between $1$ and $N$ vertices, and thus having a positive Grundy value. Hence $\grundy_{I_N}(G)=0$.
		
		Similarly, if $G$ has $N+2$ vertices, one can play to a graph $G'$ with $N+1$ vertices since there exists a vertex not disconnecting the graph, thus to a graph with Grundy value $0$. But there is no possible move to a graph of Grundy value $1$. Finally $\grundy_{I_N}(G)=1$.
	\end{proof}
	
	This lemma can be extended when one appends a path of size $N+1$ to $G$.
	\begin{lemma}\label{lem:general_bounds}
		Let $G$ be a graph, $u$ a vertex of $G$ and $G'=\app{G}{u}{N+1}$.
		We have $\grundy_{I_N}(G') = |G|$ if $|G|\in \{0,1\}$, $\grundy_{I_N}(G') \geq 2$ if $|G|\in \{2,\ldots,N\}$, $\grundy_{I_N}(G') = 0$ if $|G|=N+1$, and $\grundy_{I_N}(G') = 1$ if $|G|=N+2$.
	\end{lemma}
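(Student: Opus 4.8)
The plan is to prove all four statements simultaneously by strong induction on $|G|$, quantified over all graphs $G$ and all vertices $u$; throughout I would write $v_1,\ldots,v_{N+1}$ for the appended path, with $v_1$ adjacent to $u$. The two cases $|G|\in\{0,1\}$ are immediate base cases: when $|G|=0$ the graph $G'$ is the path $P_{N+1}$, and when $|G|=1$ it is the path $P_{N+2}$, so Lemma~\ref{lem:In-paths} gives $\grundy_{I_N}(G')=(N+1)\bmod(N+1)=0=|G|$ and $(N+2)\bmod(N+1)=1=|G|$ respectively.

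The backbone of every remaining case is a classification of the legal moves from $G'$. First I would observe that no legal move can remove a set $S$ meeting both $V(G)$ and the appended path: since $G$ connects to the path only through the edge $uv_1$, such a connected $S$ must contain $u$ together with a prefix $v_1,\ldots,v_j$, and keeping $G'-S$ connected then forces either the whole path ($j=N+1$) or all of $V(G)$ into $S$, so that $|S|\geq N+1>N$. Hence every legal move is of one of two types: a \emph{path move} removing a suffix $v_i,\ldots,v_{N+1}$, which lands on $\app{G}{u}{i-1}$ with $i-1\in\{1,\ldots,N\}$; or a \emph{$G$-move} removing a connected $S\subseteq V(G)$ with $u\notin S$ and $G-S$ connected, which lands on $\app{(G-S)}{u}{N+1}$ with $|G-S|=|G|-|S|$. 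The crucial point is that the $G$-move options are themselves instances of the lemma with a strictly smaller first graph, so the induction hypothesis applies to them.

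For $|G|\in\{2,\ldots,N\}$ the claim $\grundy_{I_N}(G')\geq 2$ is then direct: removing a path suffix of length $|G|$ (resp. $|G|-1$) is legal and yields a graph of total size $N+1$ (resp. $N+2$), which by Lemma~\ref{lem:sizeOfGraph} has Grundy value $0$ (resp. $1$); since both $0$ and $1$ occur among the options, the mex is at least $2$. For $|G|=N+1$ I would show every option is an $\outcomeN$-position. A $G$-move option $\app{(G-S)}{u}{N+1}$ has $|G-S|\in\{1,\ldots,N\}$, so by induction its Grundy value is $|G-S|$ or $\geq 2$, in any case positive. A path-move option $\app{G}{u}{m}$ (with $1\le m\le N$) is an $\outcomeN$-position because deleting its whole pendant path is a legal move reaching $G$, a graph of size $N+1$ whose Grundy value is $0$ by Lemma~\ref{lem:sizeOfGraph}. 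As no option has Grundy value $0$, we conclude $\grundy_{I_N}(G')=0$.

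The last case $|G|=N+2$, yielding $\grundy_{I_N}(G')=1$, is the delicate one and I expect it to be the main obstacle, since it requires both producing a Grundy-$0$ option and ruling out every Grundy-$1$ option. For the former, I would pick a non-cut vertex $w\neq u$ of $G$ (one exists, as a connected graph on at least two vertices has at least two non-cut vertices); deleting $w$ is a $G$-move to $\app{(G-w)}{u}{N+1}$ with $|G-w|=N+1$, whose Grundy value is $0$ by the case just proved. For the latter, I treat the two move-types: a $G$-move option has $|G-S|\in\{2,\ldots,N+1\}$, so by induction its Grundy value is $\geq 2$ or $0$, never $1$; and a path-move option $\app{G}{u}{m}$ cannot have Grundy value $1$ because deleting its entire pendant path gives $G$, of size $N+2$ and hence Grundy value $1$ by Lemma~\ref{lem:sizeOfGraph}, so $1$ already appears among \emph{its own} options and the mex of a set containing $1$ is never $1$. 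Combining, $0$ occurs but $1$ does not among the options of $G'$, whence $\grundy_{I_N}(G')=1$.
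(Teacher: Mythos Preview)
Your argument is correct and follows essentially the same route as the paper's: classify the options of $G'$, then combine Lemma~\ref{lem:sizeOfGraph} with the previously established cases of the present lemma. One small wrinkle worth tightening: your blanket move classification (``no legal move meets both $V(G)$ and the appended path, hence every move is a path-suffix move or a $G$-move with $u\notin S$'') is asserted for all remaining $|G|\ge 2$, but it actually fails when $|G|\le N$ --- removing all of $V(G)$ (possibly together with a prefix $v_1,\dots,v_j$) is then a legal move, and in the ``all of $V(G)$ in $S$'' branch your bound only gives $|S|\ge |G|+1$, not $|S|\ge N+1$. This does not damage the proof, since you use the classification exhaustively only in the cases $|G|=N+1$ and $|G|=N+2$, where it is valid; just restrict the statement of the classification to those two cases.
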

	
	\begin{proof}
		Let $n$ be the number of vertices of $G$.
		If $n\leq 1$, then $G'$ is a path and the result is true.
		
		If $n \in \{2,\ldots,N\}$, then one can play from $G'$ to $P_{N+1}$ by removing $G$ and to $\app{G}{u}{N+2-n}$ which has $N+2$ vertices. By Lemma \ref{lem:sizeOfGraph}, these two graphs have Grundy values 0 and 1, thus $\grundy_{I_N}(G')\geq 2$.
		
		Assume now that $n=N+1$. Let $H'$ be an option of $G'$. Either $H'=\app{G}{u}{i}$ with $i\in \{1,...,N\}$ or $H'=\app{H}{u}{i}$ where $H$ is an option of $G$ not containing $u$ (taking $u$ but not the whole graph $G$ would disconnect $G'$).
		In the first case, one can play from $H'$ to $G$ which has Grundy value $O$. Thus $\grundy_{I_N}(H')\neq 0$. In the second case, as seen before, since $H$ has between $1$ and $N$ vertices, we also have $\grundy_{I_N}(H')\neq 0$. Finally, $\grundy_{I_N}(G')=0$.
		
		Finally, assume that $n=N+2$. As before, the options $H'$ of $G'$ are either of the form $H'=\app{G}{u}{i}$ with $i\in \{1,...,N\}$ or $H'=\app{H}{u}{i}$ where $H$ is a option of $G$ not containing $u$. In particular, one can take for $H$ in the second case the graph $G$ where a vertex not disconnecting $G$ and different from $u$ is removed. This option has Grundy value $0$. Thus $\grundy_{I_N}(G')>0$. In the first case, one can play to $G$ with Grundy value $1$ (by the previous lemma) and thus $\grundy_{I_N}(H')\neq 1$. In the second case, since $H$ has between $2$ and $N+1$ vertices, we also have $\grundy_{I_N}(H')\neq 1$. Finally, $\grundy_{I_N}(G')=1$.
	\end{proof}

	In order to derive periodicity results, we will often prove that $G$ and $\app{G}{u}{N+1}$ have the same Grundy value. To prove such a result, a classical way is to prove the equivalent statement that $G+\app{G}{u}{N+1}$ is a $\outcomeP$-position. We will generally prove this by proving that any move of the first player can be answered by the second player to a move to a $\outcomeP$-position. The following proposition shows that, somehow, the only case to be addressed is the one where the first player removes vertices in $G$, and in particular removes the vertex $u$.
	
	\begin{prop}\label{prop:winning_move}
		Let $N$ be an integer. Let $G$ be a graph and $u$ be a vertex of $G$. Assume $G$ is minimal in the sense that $\grundy_{I_N}(G)\neq\grundy_{I_N}(\app{G}{u}{N+1})$ but $\grundy_{I_N}(G')=\grundy_{I_N}(\app{G'}{u}{N+1})$ for any connected subgraph $G'$ of $G$ containing $u$. Since $\grundy_{I_N}(G+\app{G}{u}{N+1})\neq 0$, then there exists a winning move on $G+\app{G}{u}{N+1}$. Then every winning move of $G+\app{G}{u}{N+1}$ is in the component $G$, removes the vertex $u$ and leaves at least two vertices.
	\end{prop}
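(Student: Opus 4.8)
The plan is to recast the statement in Sprague--Grundy terms and then to classify all moves on the disjoint sum $G+\app{G}{u}{N+1}$. A move is \emph{winning} precisely when it reaches an $\outcomeP$-position, i.e.\ a position whose two summands have equal Grundy values. So it suffices to take any move that is \emph{not} of the claimed form (not in $G$, or in $G$ but not removing $u$, or removing $u$ but leaving at most one vertex) and show that the position it produces is an $\outcomeN$-position; I would do this either by exhibiting from that position a reply to a $\outcomeP$-position, or by a one-line comparison of Grundy values. Throughout I write $a=\grundy(G)$ and $b=\grundy(\app{G}{u}{N+1})$, so $a\neq b$ and the starting position has nonzero Grundy value.

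The workhorse is a mirroring (Tweedledum--Tweedledee) argument for every move that leaves the cut vertex $u$ untouched. If the first player removes a connected $H\subseteq G-u$ from the copy of $G$, so that $G'=G-H$ is a \emph{proper} connected subgraph of $G$ containing $u$, then from the resulting $G'+\app{G}{u}{N+1}$ I reply by deleting the \emph{same} $H$ from the $G$-part of $\app{G}{u}{N+1}$, reaching $G'+\app{G'}{u}{N+1}$. This reply is legal (it preserves connectivity and its size lies in $I_N$), and by the minimality hypothesis $\grundy(G')=\grundy(\app{G'}{u}{N+1})$, so the position is a $\outcomeP$-position; hence the original move was not winning. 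The symmetric move, removing such an $H$ from the $G$-part of the $\app{G}{u}{N+1}$ component, is answered identically in the $G$ component. This shows at once that no winning move avoids $u$, so every winning move played in $G$ must remove $u$.

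It then remains to dispatch the moves in the $\app{G}{u}{N+1}$ component that touch the path or delete $u$, and the moves in $G$ that remove $u$ but leave at most one vertex. If a move merely shortens the appended path to $\app{G}{u}{j}$ with $1\le j\le N$, I reply by deleting the whole remaining path $P_j$ (legal since $j\le N$), landing on $G+G$, a $\outcomeP$-position. If a move deletes all of $G$ inside that component (possible only when $|G|\le N$), the residue is $G+P_{N+1}$, and since $\grundy(P_{N+1})=0$ by Lemma~\ref{lem:In-paths} while $\grundy(G)\ge 1$ by Lemma~\ref{lem:sizeOfGraph}, this is an $\outcomeN$-position; a move deleting $G$ together with a path-prefix of length $m$ gives $G+P_{N+1-m}$, where the constraint $|G|+m\le N$ forces $\grundy(P_{N+1-m})=N+1-m>|G|\ge\grundy(G)$ using Observation~\ref{obs:|V|_bound}, again $\outcomeN$. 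Finally, a move in $G$ removing $u$ and leaving $0$ or $1$ vertex forces $|G|\le N+1$, and Lemma~\ref{lem:general_bounds} then pins $b$ down enough ($b\ge 1$ when $|G|\le N$, and $b=0$ when $|G|=N+1$) to show the residual position $\app{G}{u}{N+1}$ or $P_1+\app{G}{u}{N+1}$ has nonzero Grundy value. Combining all of this, a winning move must lie in $G$, remove $u$, and leave at least two vertices.

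The one genuinely delicate point, and the main obstacle, is the exhaustive and correct enumeration of the legal moves on $\app{G}{u}{N+1}$: I must argue that a connected subgraph whose deletion keeps the graph connected is necessarily a path-suffix, an $H\subseteq G-u$, or all of $G$ possibly extended by a path-prefix. This hinges on the fact that $u$ is the unique cut vertex joining $G$ to the appended path, so removing $u$ without removing an entire side disconnects the graph, and on the size constraint $|L|\le N$ that forbids swallowing both $u$ and the whole path of $N+1$ vertices. Once this classification is secured, every case falls to the mirror reply or to a short Grundy comparison via Lemmas~\ref{lem:In-paths}, \ref{lem:sizeOfGraph}, \ref{lem:general_bounds} and Observation~\ref{obs:|V|_bound}.
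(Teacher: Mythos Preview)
Your proof is correct and follows essentially the same case analysis as the paper's: classify the moves on $G+\app{G}{u}{N+1}$ and show that every move not of the claimed form can be answered to a $\outcomeP$-position (or is seen directly to land in an $\outcomeN$-position). The mirroring argument for moves avoiding $u$, the ``complete the path to reach $G+G$'' reply, and the use of Observation~\ref{obs:|V|_bound} for the $G+P_{N+1-m}$ positions all match the paper.

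The one place you diverge is the final case, a move in $G$ that removes $u$ and leaves at most one vertex. The paper handles this by an explicit reply: the second player removes $|G|-1$ vertices from the end of the appended $P_{N+1}$, leaving in the large component a graph on $N+2$ vertices, which has Grundy value $1$ by Lemma~\ref{lem:sizeOfGraph}, matching the $P_1$ on the other side. You instead read off $b=\grundy(\app{G}{u}{N+1})$ directly from Lemma~\ref{lem:general_bounds} and compare. Both routes work; yours is a touch cleaner since it avoids constructing the reply. One small tightening: in the one-vertex-left subcase you need $b\neq 1$, and your parenthetical ``$b\ge 1$ when $|G|\le N$'' is not quite enough for that; what Lemma~\ref{lem:general_bounds} actually gives there is $b\ge 2$ (since leaving one vertex forces $|G|\ge 2$), which is what you need.
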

	
	\begin{proof}
		Any move in $P_{N+1}$ can be completed to leave $G + G$ to the first player, which is a $\outcomeP$-position. Any move in $\app{G}{u}{N+1}$ leading to $\app{G'}{u}{N+1}$ with $G'$ a connected subgraph of $G$ containing $u$ can be replicated so as to leave $G'+\app{G'}{u}{N+1}$, which is a $\outcomeP$-position since $G$ is assumed to be minimal. Now let us consider a move in $\app{G}{u}{N+1}$ leading to $P_{N+1-i}$. This implies that $|G|\leq N-i$. By Observation~\ref{obs:|V|_bound}, this implies $g=\grundy_{I_N}(G) \leq N-i < N+1-i$, hence there exists a move from $P_{N+1-i}$ to $P_g$. In the whole game, the first player gets $G+P_g$ which has Grundy value $0$. Thus, if a winning move on $G+\app{G}{u}{N+1}$ exists, then this move is on $G$, and leads to a connected subgraph $G'$ of $G$. Moreover, if $u$ is still in $G'$ or if the move is taking the whole graph $G$, then this move can be replicated by the second player, so as to leave $G' +\app{G'}{u}{N+1}$ or $P_{N+1}$, which is in both cases a $\outcomeP$-position. Finally, if $G'$ is a single vertex $v$, then the second player can take at the end of $P_{N+1}$ the same number of vertices, that is $|G|-1$, leaving a graph with $N+2$ vertices. Thus at the end, using Lemma \ref{lem:sizeOfGraph}, the two components of the sum have Grundy value 1 and thus the sum is a $\outcomeP$-position.
	\end{proof}
	

	
	
	
	\subsection{Simple stars}
	
	In this subsection, we consider the case when $G$ is a simple star to which a path is appended. We call a simple star a graph with a central vertex connected to some vertices of degree 1, that we call leaves. We will denote by $\sstar{1^t}$ the simple star with $t$ leaves. We assume that $N\geq 3$ since the case $N=2$ is solved in \cite{033}.
	
	The periodicity of the Grundy values is an easy consequence of Proposition \ref{prop:winning_move}.
	
	\begin{prop}\label{prop:Stk_general}
		Let $t$ be a positive integer, $u$ the central vertex of $\sstar{1^t}$ and $N\geq 3$. For any $k\geq0$, we have $\grundy_{I_N}(\app{\sstar{1^t}}{u}{k})=\grundy_{I_N}(\app{\sstar{1^t}}{u}{k \bmod N+1})$. In particular, the function $f_{I_N,\sstar{1^t},u}$ is purely periodic with period $N+1$.
	\end{prop}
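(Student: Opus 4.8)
The plan is to reduce the proposition to the single equality $\grundy_{I_N}(\app{\sstar{1^t}}{u}{k}) = \grundy_{I_N}(\app{\sstar{1^t}}{u}{k+N+1})$ for every $t\geq 1$ and every $k\geq 0$. Because this equality holds already from $k=0$ onwards, pure periodicity with period $N+1$, and hence the stated identity $\grundy_{I_N}(\app{\sstar{1^t}}{u}{k})=\grundy_{I_N}(\app{\sstar{1^t}}{u}{k\bmod N+1})$, follow at once. To prove the equality I would argue by contradiction, picking among all pairs $(t,k)$ violating it one whose graph $\app{\sstar{1^t}}{u}{k}$ has the fewest vertices, and then feeding this graph into Proposition \ref{prop:winning_move}.

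First I would write $G=\app{\sstar{1^t}}{u}{k}$ and let $v$ be the endpoint of the appended path (with $v=u$ when $k=0$), so that $\app{G}{v}{N+1}=\app{\sstar{1^t}}{u}{k+N+1}$. The key preparatory step is to verify that $G$ is minimal in the sense demanded by Proposition \ref{prop:winning_move} relative to $v$, i.e. that every proper connected subgraph $G'$ of $G$ containing $v$ satisfies $\grundy_{I_N}(G')=\grundy_{I_N}(\app{G'}{v}{N+1})$. Here I would enumerate these subgraphs: either $G'$ avoids the centre $u$, in which case connectedness forces it to be a suffix path of the ray, and the required equality is precisely Lemma \ref{lem:In-paths} (appending $N+1$ vertices to a path leaves its value unchanged mod $N+1$); or $G'$ contains $u$, in which case connectedness together with $v\in G'$ forces it to contain the whole ray, so $G'=\app{\sstar{1^s}}{u}{k}$ for some $s<t$, a graph with strictly fewer vertices, which is therefore not a counterexample by minimality. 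This enumeration is the step I expect to require the most care.

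With minimality in hand, Proposition \ref{prop:winning_move} applies: since $\grundy_{I_N}(G)\neq\grundy_{I_N}(\app{G}{v}{N+1})$, the sum $G+\app{G}{v}{N+1}$ is an $\outcomeN$-position, and every winning move lies in the $G$-component, removes $v$, and leaves at least two vertices. I would then observe that in $\app{\sstar{1^t}}{u}{k}$ such a move must delete a ray-suffix of some length $j$ with $1\leq j\leq\min(N,k)$, leading to $\app{\sstar{1^t}}{u}{k-j}$: any connected removal containing both $v$ and $u$ would delete the entire ray together with $u$, leaving only isolated leaves, hence never two connected vertices. In particular, when $k=0$ no move of this form exists at all, which already contradicts the guaranteed existence of a winning move, so $k=0$ is never a minimal counterexample.

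Finally I would chase the equalities to a contradiction. A winning move to $\app{\sstar{1^t}}{u}{k-j}$ makes $\app{\sstar{1^t}}{u}{k-j}+\app{\sstar{1^t}}{u}{k+N+1}$ a $\outcomeP$-position, so $\grundy_{I_N}(\app{\sstar{1^t}}{u}{k-j})=\grundy_{I_N}(\app{\sstar{1^t}}{u}{k+N+1})$. Since $\app{\sstar{1^t}}{u}{k-j}$ has fewer vertices than $G$, minimality gives $\grundy_{I_N}(\app{\sstar{1^t}}{u}{k-j})=\grundy_{I_N}(\app{\sstar{1^t}}{u}{k+N+1-j})$. Combining the two yields $\grundy_{I_N}(\app{\sstar{1^t}}{u}{k+N+1-j})=\grundy_{I_N}(\app{\sstar{1^t}}{u}{k+N+1})$; but $\app{\sstar{1^t}}{u}{k+N+1-j}$ is obtained from $\app{\sstar{1^t}}{u}{k+N+1}$ by removing a ray-suffix of length $j\leq N$, so it is one of its options, and a Grundy value computed as a $\mex$ can never coincide with that of a single option. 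This contradiction shows no minimal counterexample exists, which completes the proof.
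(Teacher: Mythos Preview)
Your proof is correct and follows essentially the same line as the paper's: set up a minimal counterexample, invoke Proposition~\ref{prop:winning_move} to pin the winning move down to a ray-shortening in the $G$-component, and derive a contradiction. The only cosmetic difference is in the last step, where the paper exhibits the second player's explicit mirroring reply (remove the same number of vertices from the long ray in the other component, landing in a $\outcomeP$-position by minimality), while you reach the same contradiction via Grundy values and the $\mex$ rule; the two arguments are equivalent.
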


	\begin{proof}
		
		Let $G=\app{\sstar{1^t}}{u}{k}$ and $u'$ be the endpoint of $P_k$ in $G$ ($u'=u$ if $k=0$). 
		Then $\app{\sstar{1^t}}{u}{k+N+1}$ can be written $\app{G}{u'}{N+1}$. We need to prove that $G+\app{G}{u'}{N+1}$ is a $\outcomeP$-position for any $k$ and $t$.
		
		Assume that this is not true and let $(k,t)$ be the smallest integers (in lexicographic order) such that
		$G+\app{G}{u'}{N+1}$ is $\outcomeN$-position.
		
		Let $G'$ be a connected subgraph of $G$ containing $u'$. Either $G'$ is a path with endpoint $u'$, or $G'=\app{\sstar{1^{t'}}}{u}{k}$ with $t'<t$. 
		By minimality of $(k,t)$, and since the Grundy values of paths are periodic, then $G'+\app{G'}{u'}{N+1}$ is a $\outcomeP$-position.

		Thus we can apply Proposition \ref{prop:winning_move} on $G$, and the only possible winning move in $G+\app{G}{u'}{N+1}$ is a move in the component $G$ that is taking $u'$ and leaving at most two vertices.
		This corresponds to a move from $G$ to $G'=\app{\sstar{1^{t}}}{u}{k'}$ with $k'<k$. Then the second player can play in the component $\app{G}{u'}{N+1}$ to $\app{\sstar{1^t}}{u}{k'+N+1}$ by taking $k-k'$ vertices on the path.
		By minimality of $k$, the resulting position is a $\outcomeP$-position and thus the move from $G$ to $G'$ was not a winning move, a contradiction.
		
	\end{proof}

	Since the function is purely periodic, if one wants to know the Grundy value of $\app{\sstar{1^t}}{u}{k}$ we just need to compute the values for $k\leq N$.
	The values for $k=0$, that are actually the Grundy values of $\sstar{1^t}$ are given in the following lemma.

	\begin{lemma}
		\label{lem:csg1nstarsareP}
		Let $t\geq 1$ be a positive integer and $N\geq 3$. We have:
		$$\grundy_{I_N}(\sstar{1^t})=\left\{
		\begin{array}{ll}
		2 & \mbox{if } t \leq N-1 \mbox{ and } t \mbox{ odd} \\
		3 & \mbox{if } t \leq N-1 \mbox{ and } t \mbox{ even} \\
		0 & \mbox{if } t = N+i \mbox{ with } i \mbox{ even} \\
		1 & \mbox{if } t = N+i \mbox{ with } i \mbox{ odd.}
		\end{array}
		\right.$$
	\end{lemma}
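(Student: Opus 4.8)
The plan is to compute the Grundy value directly from the definition, by first enumerating \emph{all} legal moves from $\sstar{1^t}$ and then evaluating a simple $\mex$ recurrence by induction on $t$. Write $g_t = \grundy_{I_N}(\sstar{1^t})$ and recall that $|\sstar{1^t}| = t+1$, that $\grundy_{I_N}(P_1)=1$, and that the empty graph has Grundy value $0$. The target is to show $g_t=2$ (resp.\ $3$) for odd (resp.\ even) $t\le N-1$, and $g_{N+i}=0$ (resp.\ $1$) for even (resp.\ odd) $i\ge0$.

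First I would settle the structure of the game on a star, which is the crux of the argument. Let $u$ be the centre. Any connected subgraph on at least two vertices must contain $u$, since the leaves form an independent set; hence a move either removes a single leaf, or removes $u$ together with a set $S$ of leaves. In the first case the remaining graph is $\sstar{1^{t-1}}$. In the second case the surviving leaves become isolated, so for the result to stay connected at most one leaf may remain, i.e.\ $|S| \in \{t-1, t\}$. Removing $u$ and $t-1$ leaves deletes $t$ vertices and leaves $P_1$; this is legal precisely when $t \le N$. Removing the whole star deletes $t+1$ vertices and leaves $\emptyset$; this is legal precisely when $t+1 \le N$. Thus, for $t \ge 1$, the complete list of options of $\sstar{1^t}$ is: $\sstar{1^{t-1}}$ (always); $P_1$ (iff $t \le N$); and $\emptyset$ (iff $t \le N-1$). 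This case analysis, especially the connectivity constraint forcing $|S| \ge t-1$, is the main obstacle; everything afterwards is a routine $\mex$ computation.

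It then remains to run the induction, using $g_0 = \grundy_{I_N}(P_1) = 1$. For $1 \le t \le N-1$ all three options are available, so $g_t = \mex\{g_{t-1}, 0, 1\}$; since $g_1 = \mex\{1,0,1\} = 2$ and thereafter $\mex\{2,0,1\} = 3$ while $\mex\{3,0,1\} = 2$, the values alternate, giving $g_t = 2$ for $t$ odd and $g_t = 3$ for $t$ even, as claimed (this also uses $N\ge3$, which guarantees $2\le N-1$ so that $g_2=3$ is reached). At $t = N$ only the first two options survive, so $g_N = \mex\{g_{N-1}, 1\} = 0$ because $g_{N-1} \in \{2,3\}$. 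For $t \ge N+1$ only the move to $\sstar{1^{t-1}}$ remains, whence $g_t = \mex\{g_{t-1}\}$, which alternates $0, 1, 0, 1, \ldots$ starting from $g_N = 0$. Writing $t = N+i$, this yields $g_t = 0$ for $i$ even and $g_t = 1$ for $i$ odd, completing the proof.
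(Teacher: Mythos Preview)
Your proof is correct and, in fact, more direct than the paper's. The key structural observation---that any connected subgraph of $\sstar{1^t}$ on at least two vertices must contain the centre, and that removing the centre forces leaving at most one leaf---is exactly the right insight, and your enumeration of the three possible options ($\sstar{1^{t-1}}$, $P_1$, $\emptyset$) with their respective legality ranges is complete and accurate. The $\mex$ recurrence then falls out cleanly.

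The paper takes a different route for the range $3\le t\le N-1$: rather than listing options and computing the $\mex$ directly, it uses the sum-game criterion, showing that $\sstar{1^t}+P_2$ (for $t$ odd) or $\sstar{1^t}+P_3$ (for $t$ even) is a $\outcomeP$-position by exhibiting a second-player strategy against every first-player move. For $t\ge N$ the paper's argument essentially coincides with yours (it invokes Lemma~\ref{lem:sizeOfGraph} for $t=N$ and then notes the single remaining move for larger $t$). Your approach is more elementary and self-contained: once the option set is pinned down, no strategy analysis is needed, and the alternation pattern is immediate from the recursion $g_t=\mex\{g_{t-1},0,1\}$. The paper's sum-game argument, while standard in combinatorial game theory, requires more casework here and does not expose the underlying recursion as transparently.
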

	
	\begin{proof}
		We use induction on $t$. The base cases are $t=1$ and $t=2$ for which $\sstar{1^t}$ is simply $P_2$ and $P_3$ respectively and the result holds.
		
		Assume now that $t\geq 3$. There are three cases:
		
		\noindent\textbf{Case 1:} $t$ is odd, $3 \leq t \leq N-1$
		
		We prove that $\sstar{1^t}+P_2$ is a $\outcomeP$-position. We examine every option of the first player:
		\begin{itemize}
			\item If the first player plays to either $\sstar{1^t} + P_1$ or $P_1 + P_2$, then the second player answers by playing to $P_1 + P_1$ which is a $\outcomeP$-position;
			\item If the first player plays to either $P_2$ or $\sstar{1^t}$, then the second player answers by playing by emptying the remaining graph;
			\item If the first player takes one leaf from $\sstar{1^t}$, then the second player answers by taking another leaf from $\sstar{1^t}$, leaving $\sstar{1^{t-2}} + P_2$. By induction hypothesis, this is a $\outcomeP$-position.
		\end{itemize}
		
		\noindent\textbf{Case 2:} $t$ is even, $4 \leq t \leq N-1$
		
		We prove that $\sstar{1^t}+P_3$ is a $\outcomeP$-position. We examine every option of the first player:
		\begin{itemize}
			\item If the first player plays to $\sstar{1^t} + P_2$ (resp. to $\sstar{1^{t-1}}+P_3$), then the second player answers by taking one leaf from $\sstar{1^t}$ (resp. by reducing $P_3$ to $P_2$), leaving $\sstar{1^{t-1}}+P_2$, which is a $\outcomeP$-position by induction hypothesis;
			\item If the first player plays to either $P_3$ or $\sstar{1^t}$, then the second player answers by playing by emptying the remaining graph (this is always possible since $t\geq4$ and $t\leq N-1$ implies $N\geq3$); 
			\item If the first player plays to either $\sstar{1^t} + P_1$ or $P_1 + P_3$, then the second player answers by playing to $P_1 + P_1$ which is a $\outcomeP$-position
		\end{itemize}
		
		\noindent\textbf{Case 3:} $t\geq N$
		
		We write $t=N+i$ with $i\geq 0$. We use induction on $i$ to prove that $\grundy(\sstar{1^t})=i \bmod 2$.
		
		If $i=0$, then $\sstar{1^t}$ has $N+1$ vertices. By Lemma~\ref{lem:sizeOfGraph}, this graph has Grundy value 0.
		
		If $i>0$, then there is only one move from $\sstar{1^t}$, which is removing one leaf, leaving $\sstar{1^{t-1}}$. Thus, we have:
		$$\begin{array}{lll}
		\grundy(\sstar{1^t}) & = \mex(\grundy(\sstar{1^{t-1}})) & \mbox{ by definition of } \grundy \\
		& = \mex(i-1 \bmod 2) & \mbox{ by induction hypothesis} \\
		& = i \bmod 2&
		\end{array}
		$$
	\end{proof}
	
	One can get the values for $k>0$ from these first values by recursive computation. For instance, table \ref{tab:grundyS1tkI4} gives the Grundy values for \csg{I_4} on $\sstar{1^t,k}$ for $t\leq 10$ and $k\leq 8$.
	
	
	\begin{table}[!h]
		\centering
		\begin{tabular}{c|c|c|c|c|c|c|c|c|c|c|c|}
			\diagbox{$k$}{$t$} & 0 & 1 & 2 & 3 & 4 & 5 & 6 & 7 & 8 & 9 & 10  \\ \hline
			0 & 1 & 2 &  3 &  2 &  0 &  1 &  0 &  1 &  0 &  1 &  0  \\ \hline
			1 & 2 & 3 & 2 & 0 & 1 & 0 & 1 & 0 & 1 & 0 & 1 \\ \hline
			2 &3 & 4 & 0 & 1 & 2 & 3 & 2 & 3 & 2 & 3 & 2  \\ \hline
			3 &4 & 0 & 1 & 4 & 3 & 2 & 3 & 2 & 3 & 2 & 3  \\ \hline
			4&0 & 1 & 5 & 3 & 4 & 5 & 4 & 5 & 4 & 5 & 4  \\ \hline 
			5&1 & 2 & 3 & 2 & 0 & 1 & 0 & 1 & 0 & 1 & 0  \\ \hline
			6&2 & 3 & 2 & 0 & 1 & 0 & 1 & 0 & 1 & 0 & 1  \\ \hline
			7&3 & 4 & 0 & 1 & 2 & 3 & 2 & 3 & 2 & 3 & 2  \\ \hline
			8&4 & 0 & 1 & 4 & 3 & 2 & 3 & 2 & 3 & 2 & 3  \\ \hline
		\end{tabular}
		\caption{The Grundy values of $\sstar{1^t,k}$ for the game \csg{I_4}, for $t \leq 10$ and $k \leq 8$.}
		\label{tab:grundyS1tkI4}
	\end{table}

	\subsection{Subdivided stars with three paths}
	
	We now focus on subdivided stars with three paths, with one of the path of size~$1$. We recall that we denote by $S_{1,k,\ell}$ the subdivided star with three paths of size respectively 1, $k$ and $\ell$. Technically speaking, all degree 1 vertices of $S_{1,k,\ell}$ are leaves, but, for the sake of simplicity, in the sequel, we will denote the degree 1 vertex corresponding to the path of size 1 as \emph{the} leaf. Conversely, we will use the word \emph{path} only to denote either $P_k$ or $P_{\ell}$, even if, technically speaking, $P_1$ is also a path. We prove that the Grundy values are purely periodic with period $N+1$ except for some values for which there is a preperiod $N+1$. More precisely, we will prove in this subsection the following theorem.

	\begin{theorem}
		\label{thm:S1lk}
		Let $N\geq 3$, $k,\ell$ be two nonnegative integers. The Grundy value of the subdivided star $\sstar{1,k,\ell}$ for the game \csg{I_n} is $$\grundy_{I_N}(\sstar{1,k,\ell})=\left\{
		\begin{array}{ll}
		\grundy_{I_N}(\sstar{1,(k \bmod (N+1)) +N+1,N}) & \mbox{if } k>N \mbox{ and } \ell \equiv N \bmod (N+1) \\
		\grundy_{I_N}(\sstar{1,N,(\ell \bmod (N+1)) +N+1}) & \mbox{if } \ell>N \mbox{ and } k \equiv N \bmod (N+1) \\
		\grundy_{I_N}(\sstar{1,k \bmod (N+1),\ell \bmod (N+1)}) & \mbox{otherwise}
		\end{array}
		\right.$$
	\end{theorem}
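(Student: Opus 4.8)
The plan is to prove the three-case formula by repeatedly establishing equivalences of the form $\sstar{1,m,\ell} \equiv \sstar{1,m+N+1,\ell}$, i.e. that appending $N+1$ vertices to the tip of one branch preserves the Grundy value, and by locating precisely the one configuration in which such a step fails. By the symmetry $k \leftrightarrow \ell$ it suffices to study reductions of a single branch. The engine for one step is Proposition~\ref{prop:winning_move}: to show $\grundy_{I_N}(\sstar{1,m,\ell}) = \grundy_{I_N}(\sstar{1,m+N+1,\ell})$ I would take a minimal counterexample to $\sstar{1,m,\ell} + \sstar{1,m+N+1,\ell}$ being a $\outcomeP$-position and invoke the proposition to conclude that the only winning move lies in the smaller component, removes its branch-tip $u$, and leaves at least two vertices. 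The minimality hypothesis of Proposition~\ref{prop:winning_move} will be supplied by a double induction (outer on $\ell$, inner on $m$): the proper connected subgraphs of $\sstar{1,m,\ell}$ containing $u$ are either paths, for which the equivalence holds by Lemma~\ref{lem:In-paths}, or sub-stars $\sstar{1,m,\ell'}$ with $\ell'<\ell$, covered by the outer induction.

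In the generic situation the second player answers by \emph{mirroring}: if the first player eats the top $i\le N$ vertices of the $m$-branch, reaching $\sstar{1,m-i,\ell}$, the second player removes $i$ vertices from the tip of the appended branch, reaching $\sstar{1,m-i,\ell} + \sstar{1,m-i+N+1,\ell}$, a $\outcomeP$-position by the inner induction. Iterating this step reduces each branch modulo $N+1$ towards $\{0,\dots,N\}$. The whole point is that this mirroring remains available exactly as long as the reduction never lands on one particular degenerate configuration, identified next.

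The crux, and the main obstacle, is the boundary step that would push a branch from length $N+1$ down into $\{0,\dots,N\}$, amounting to the \emph{critical equivalence} $\sstar{1,0,B} \equiv \sstar{1,N+1,B}$, where $\sstar{1,0,B}=P_{B+2}$. Here the vertex onto which the $N+1$ path is appended is the \emph{internal} pseudo-centre of $P_{B+2}$, so mirroring breaks down: removing that centre in the companion star $\sstar{1,N+1,B}$ would disconnect its two long branches and cannot be replicated. I would therefore analyse directly the dangerous moves of the first player — those eating the leaf together with a prefix of the $B$-branch — and construct compensating responses \emph{inside} $\sstar{1,N+1,B}$, using the path Grundy values of Lemma~\ref{lem:In-paths} to balance the two components. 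The expected outcome is that a valid response exists whenever $B\neq N$, while for $B=N$ one computes $\grundy_{I_N}(\sstar{1,N+1,N}) \neq 1 = \grundy_{I_N}(P_{N+2})$, so the two positions are genuinely inequivalent. This single exception is what forces the preperiod. A consequence is that a branch of residue $N$ can be carried down to exactly $N$, but then, sitting at the value $N$, it blocks the final descent of the other branch.

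Finally I would assemble the three cases. A branch of length $\not\equiv N \pmod{N+1}$ always reaches its representative in $\{0,\dots,N\}$, since at the decisive step the other branch never holds the forbidden value $N$; a branch of length $\equiv N$ reduces to exactly $N$, after which the failure of the critical equivalence freezes the other branch at its representative $(k \bmod (N+1)) + N+1$ in $\{N+1,\dots,2N+1\}$. Splitting according to whether $k$ or $\ell$ is $\equiv N$, and checking that when both are $\equiv N$ the two symmetric descriptions coincide (namely $\sstar{1,2N+1,N}=\sstar{1,N,2N+1}$), yields exactly the stated formula. The only delicate part of the argument is the boundary analysis of the preceding paragraph; everything else is bookkeeping of the iterated mirroring reductions.
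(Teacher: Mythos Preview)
Your outline has two genuine gaps.

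\textbf{The mirroring step is incomplete.} In the inductive step for $m\ge 1$, Proposition~\ref{prop:winning_move} only tells you that the winning move in $\sstar{1,m,\ell}$ removes the branch-tip $u$; it does \emph{not} confine that move to the $m$-branch. Whenever $m+2\le N$, the first player may take the whole $m$-branch, the centre, the leaf and part of the $\ell$-branch, leaving a path $P_j$. Your mirror — removing $i$ vertices from the long branch of $\sstar{1,m+N+1,\ell}$ — then leaves a star $\sstar{1,m+N+1-i,\ell}$, and to certify a $\outcomeP$-position you must compare $\grundy(P_j)$ with the Grundy value of that star. Your inner induction only gives equivalences between stars, never their actual values, so this comparison cannot be made without explicit computation. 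In the paper this is exactly Subcase~2.3 of Lemma~\ref{lem:S1lkInternalPeriodicity}, which is resolved only by invoking the explicit formulae of Lemma~\ref{lem:S1lkWithSmalllk}.

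\textbf{The exceptional configuration is misidentified.} You assert that the critical equivalence $P_{B+2}\equiv\sstar{1,N+1,B}$ fails precisely at $B=N$. In fact it \emph{holds} there: $|\sstar{1,N+1,N}|=2N+3\equiv 1\pmod{N+1}$, so by Lemma~\ref{lem:01s1kl} both sides have Grundy value $1$. The failures of $\sstar{1,m,N}\equiv\sstar{1,m+N+1,N}$ actually occur at various interior values $m\in\{1,\dots,N-2\}$ and depend on $N$; the paper's own example is $\grundy_{I_8}(\sstar{1,2,8})=10\neq 6=\grundy_{I_8}(\sstar{1,11,8})$, a failure at $m=2$. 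Pinning these down is precisely the content of Claims~\ref{clm:valuesOfColumn} and~\ref{claim:2N} in Lemma~\ref{lem:S1lkExternalPeriodicity}, which rely again on the explicit small-case values. Because the exceptional set is not the single point you describe, the bookkeeping in your final paragraph does not reproduce the theorem, and the ``boundary analysis'' you sketch cannot be carried out using only Lemma~\ref{lem:In-paths}.

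In short, the paper's route through the explicit table of Lemma~\ref{lem:S1lkWithSmalllk} is not an avoidable detour: both the centre-removal responses and the location of the preperiod depend on those values.
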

	
	Thanks to this theorem, for a fixed $N$ there are only $O(N^2)$ values to compute.
	
	\begin{cor}
		\label{cor:S1lk-polAlgo}
		Let $k,\ell$ be two nonnegative integers. Then there exists an algorithm computing the Grundy value of the subdivided star $\sstar{1,k,\ell}$ for the game \csg{I_N}, and this algorithm runs in time $\theta(N^2)$.
	\end{cor}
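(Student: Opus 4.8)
The plan is to read Corollary~\ref{cor:S1lk-polAlgo} as an immediate algorithmic consequence of Theorem~\ref{thm:S1lk}, so that the structural content is already established and what remains is a bookkeeping and complexity argument. First I would make the reduction of Theorem~\ref{thm:S1lk} explicit as a constant-time preprocessing step: given arbitrary nonnegative integers $k$ and $\ell$, a bounded number of comparisons and modular reductions (computing $k \bmod (N+1)$ and $\ell \bmod (N+1)$, and testing the two congruence-to-$N$ conditions) rewrites $\grundy_{I_N}(\sstar{1,k,\ell})$ as $\grundy_{I_N}(\sstar{1,k',\ell'})$ for a pair $(k',\ell')$ whose entries are each at most $2N+1$. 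Indeed, in the two exceptional branches the replaced coordinate has the form $(k \bmod (N+1))+N+1 \le N+N+1 = 2N+1$ while the other coordinate equals $N$, and in the generic branch both coordinates are at most $N$. Hence for every input the problem reduces, in $O(1)$ time, to evaluating the Grundy value of a star whose two path-lengths lie in $\{0,1,\dots,2N+1\}$.

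Next I would bound the number of such \emph{base} stars and describe how to tabulate them. There are at most $(2N+2)^2 = \Theta(N^2)$ pairs $(i,j)$ with $0\le i,j\le 2N+1$, and I would compute $\grundy_{I_N}(\sstar{1,i,j})$ for all of them by dynamic programming, processing the pairs in nondecreasing order of $i+j$. Each value is the $\mex$ of the Grundy values of the options of $\sstar{1,i,j}$; by the connectivity rule of the game these options are of only two kinds, namely shorter stars $\sstar{1,i',j'}$ with $i'+j'<i+j$ (hence already present in the table), or paths obtained by removing the central leaf or by removing a connected piece running through the center, whose Grundy values are given in closed form by Lemma~\ref{lem:In-paths}. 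Thus every table entry is filled using only previously-tabulated stars together with $O(1)$-time path evaluations, and a single $O(1)$ lookup after the reduction above answers any query $(k,\ell)$. Since the paper has already observed that there are only $\Theta(N^2)$ values to compute and each is obtained from already-known data in bounded time, the total running time is $\Theta(N^2)$, the matching lower bound being immediate from the fact that the algorithm explicitly computes a table with $\Theta(N^2)$ entries.

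The step requiring the most care, rather than a genuine obstacle, is the accounting of the per-entry cost: one must verify that every option of a base star is correctly classified as either a strictly smaller base star or a path, so that its Grundy value is available immediately, and that the $\mex$ at each cell is therefore taken over an already-known set of values. Observation~\ref{obs:|V|_bound} and Lemma~\ref{lem:In-paths} are exactly what guarantee that these option values are determined without any further recursion, which is what allows the count of $\Theta(N^2)$ base values to govern the running time. Everything else is routine, since Theorem~\ref{thm:S1lk} already performs the essential reduction from the unbounded parameters $k,\ell$ to the bounded range $\{0,\dots,2N+1\}$.
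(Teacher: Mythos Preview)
Your overall approach matches the paper's: the corollary is stated there without a separate proof, relying only on the remark that Theorem~\ref{thm:S1lk} reduces any $(k,\ell)$ to one of $O(N^2)$ bounded pairs. Your constant-time reduction step and the idea of tabulating the base values are exactly this.

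There is, however, a genuine gap in your complexity accounting. You assert that ``each [table entry] is obtained from already-known data in bounded time'', but a base star $\sstar{1,i,j}$ with $i,j\le 2N+1$ has $\Theta(N)$ options, not $O(1)$: one may remove any of $1,\dots,N$ vertices from either path, plus the moves through the center. Computing the $\mex$ over these $\Theta(N)$ already-tabulated values costs $\Theta(N)$ per cell, so your generic dynamic programme runs in $\Theta(N^3)$, not $\Theta(N^2)$. Observation~\ref{obs:|V|_bound} and Lemma~\ref{lem:In-paths} ensure the option values are \emph{available}, but they do not make the set of options small.

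To actually reach $\Theta(N^2)$ you should use the closed-form results proved in the paper rather than a blind recursion: Lemma~\ref{lem:S1lkWithSmalllk} gives $\grundy_{I_N}(\sstar{1,k,\ell})$ in $O(1)$ for all $k,\ell<N$ (that is $\Theta(N^2)$ entries at $O(1)$ each), Lemma~\ref{lem:01s1kl} handles the cases with a coordinate equal to $N$, and only the $O(N)$ exceptional values $\grundy_{I_N}(\sstar{1,k,N})$ for $N<k\le 2N+1$ (cf.\ Claim~\ref{claim:2N}) require scanning $O(N)$ previously computed values apiece. This yields the claimed $\Theta(N^2)$ bound; your argument as written does not.
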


	However, all of these values are needed to prove Theorem \ref{thm:S1lk}. First, we give the Grundy values of $\sstar{1,k,\ell}$ when $\ell,k<N$.
	
	\begin{lemma}
		\label{lem:S1lkWithSmalllk}
		Let $k,\ell$ be nonnegative integers. 
		If $k,\ell<N$, then the Grundy value of the subdivided star $\sstar{1,k,\ell}$ for the game \csg{I_n} is  $$\grundy_{I_N}(\sstar{1,k,\ell})=\left\{
		\begin{array}{ll}
		k+\ell & \mbox{if } k+\ell \leq N-2 \mbox{ and } k,\ell \mbox{ odd} \\
		k+1 & \mbox{if } k+\ell \leq N-2 \mbox{ and } k=\ell\neq0 \mbox{ even} \\
		k+\ell+2 \bmod (N+1) & \mbox{otherwise}
		\end{array}
		\right.$$
	\end{lemma}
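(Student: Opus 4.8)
The plan is to compute $\grundy_{I_N}(\sstar{1,k,\ell})$ directly as a minimum excludant (mex) over its options, proceeding by induction on $k+\ell$. Since $\sstar{1,k,\ell}\cong\sstar{1,\ell,k}$ and the claimed formula is symmetric in $k$ and $\ell$ (the second line only fires when $k=\ell$), I may assume throughout that $k\le\ell$. The base case $k=0$ gives the path $P_{\ell+2}$, whose Grundy value $(\ell+2)\bmod(N+1)$ matches the third line by Lemma~\ref{lem:In-paths}, so I may take $1\le k\le\ell$.

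First I would classify the legal moves from $\sstar{1,k,\ell}$, writing $c$ for its centre, $a$ for its leaf, and $b_1\cdots b_k$, $d_1\cdots d_\ell$ for its two rays. A move keeping the graph connected is one of: (1) shortening the $k$-ray, reaching $\sstar{1,k-i,\ell}$; (2) shortening the $\ell$-ray, reaching $\sstar{1,k,\ell-i}$; (3) deleting the leaf $a$, reaching the path $P_{k+\ell+1}$; or (4)/(5) deleting a connected set through $c$ that wipes out two whole branches together with a prefix of the third, leaving a path $P_{k-j}$ (legal iff $\ell+j+2\le N$) or $P_{\ell-j'}$ (legal iff $k+j'+2\le N$). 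Since $k,\ell<N$ all moves of types (1),(2) are available, their options have strictly smaller $k+\ell$ and so are covered by the induction hypothesis, while every path option has, by Lemma~\ref{lem:In-paths}, Grundy value equal to its length mod $N+1$.

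Because $|\sstar{1,k,\ell}|=k+\ell+2$, I split on the size. When $k+\ell=N-1$ or $k+\ell=N$ (so $|G|=N+1$ or $N+2$), Lemma~\ref{lem:sizeOfGraph} gives $\grundy=0$, resp.\ $1$, matching the third line. In the large regime $k+\ell\ge N+1$ I would argue by a ``staircase'': writing $r=(k+\ell+2)\bmod(N+1)$, the type-(1) options $\sstar{1,k',\ell}$ with $k'=k-1,k-2,\dots,N-1-\ell$ are all legal, and as $k'$ decreases their value $k'+\ell$ passes through the large regime, then through $|G|=N+2$ and $|G|=N+1$, so by the induction hypothesis and Lemma~\ref{lem:sizeOfGraph} their Grundy values are exactly $r-1,r-2,\dots,1,0$; meanwhile no option equals $r$ (same-regime options differ from $k+\ell+2$ by a nonzero amount that is $<N+1$, the leaf move gives $r-1$, and the remaining path moves give values $>r$), so the mex is $r$.

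The main obstacle is the small regime $k+\ell\le N-2$, where the mex can fall strictly below $|G|\bmod(N+1)=k+\ell+2$ and the parity conditions surface. Here the type-(4)/(5) path options already realise all of $\{0,\dots,\ell\}$, and I must both exclude the claimed value and cover all smaller ones. For the first line ($k,\ell$ odd, target $k+\ell$) the key is a parity argument: the dangerous option $\sstar{1,k-2,\ell}$ still has two odd rays, so by induction it has value $(k-2)+\ell\ne k+\ell$, and a uniform parity check shows every option misses $k+\ell$; coverage of $\{\ell+1,\dots,k+\ell-1\}$ then comes from type-(1) options of one parity together with type-(2) options of the other. For the second line ($k=\ell$ even, target $k+1$) coverage of $\{0,\dots,k\}$ is immediate from the path options, and exclusion of $k+1$ follows because each $\sstar{1,k-i,k}$ falls in the third line with value $2k-i+2\ne k+1$. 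For the remaining (third-line) small cases one verifies, again through the induction hypothesis and short parity bookkeeping, that types (1),(2) supply exactly the missing values $\ell+1,\dots,k+\ell$ while none reaches $k+\ell+2$. Carrying out this parity bookkeeping uniformly across the sub-cases is the delicate part of the argument.
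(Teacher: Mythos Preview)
Your outline is essentially correct, and for the small regime $k+\ell\le N-2$ it follows the paper almost verbatim: both proceed by induction, list the same three move types, and then do the same parity case-split (odd/odd, even diagonal, and the three remaining ``otherwise'' parities). The paper spells out those five subcases in full, which is exactly the ``delicate bookkeeping'' you flag.

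Where you diverge is in the range $k+\ell\ge N-1$. You split off the two values handled by Lemma~\ref{lem:sizeOfGraph} and then, for $k+\ell\ge N+1$, compute the mex directly via your staircase of type-(1) options. The paper instead treats all of $k+\ell\ge N-1$ uniformly by showing that $\sstar{1,k,\ell}+P_j$ with $j=k+\ell+2-(N+1)$ is a $\outcomeP$-position: any move of size $c$ in the star is answered by a move of size $N+1-c$ back to $P_j$, and any move in $P_j$ is mirrored on the longer ray. This sum argument is short and avoids the option-by-option exclusion you need.

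Your mex route does work, but as written there is a small hole in the exclusion step. You rule out ``same-regime options'', the leaf move, and the path moves, but you do not address the star options that fall into the \emph{small} regime, namely $\sstar{1,k',\ell}$ with $1\le k'\le N-2-\ell$ (and symmetrically on the $\ell$-ray). These exist whenever $\ell\le N-3$, and their Grundy values are governed by the parity formulas, not by $|G|\bmod(N+1)$. They are easy to dismiss---e.g.\ in the odd/odd line one would need $k'+\ell=r$, forcing $k'=k-N+1\le 0$; the even-diagonal line would force $k=N$; and the third line would force $k'<0$---but the sentence ``same-regime options differ by a nonzero amount $<N+1$'' does not cover them and should be supplemented.
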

	
	These values are compiled in Table~\ref{tab:grundyValuesS1lk}.
	
	\begin{table}[!h]
		\centering
		\scalebox{0.8}{
			\begin{tabular}{|c|c|c|c|c|c|c|c|c|c|c|c|}
				\hline
				\diagbox{$k$}{$\ell$} & 0 & 1 & 2 & 3 & 4 & 5 & \ldots & $N-4$ & $N-3$ & $N-2$ & $N-1$ \\ \hline
				0 & 2 & 3 & 4 & 5 & 6 & 7 & \ldots & $N-2$ & $N-1$ & $N$ & 0 \\ \hline
				\multirow{2}{*}{1} & \multirow{2}{*}{3} & \multirow{2}{*}{\textbf{2}} & \multirow{2}{*}{5} & \multirow{2}{*}{\textbf{4}} & \multirow{2}{*}{7} & \multirow{2}{*}{\textbf{6}} & \multirow{2}{*}{\ldots} & \textbf{$\mathbf{N-3}$} if $N$ even & $N$ if $N$ even & \multirow{2}{*}{0} & \multirow{2}{*}{1}  \\ 
				& & & & & & & & $N-1$ if $N$ odd & $\mathbf{N-2}$ if $N$ odd & & \\ \hline
				2 & 4 & 5 & \textbf{3} & 7 & 8 & 9 & \ldots & $N$ & 0 & 1 & 2 \\ \hline
				3 & 5 & \textbf{4} & 7 & \textbf{6} & 9 & \textbf{8} & \ldots & 0 & 1 & 2 & 3 \\ \hline
				4 & 6 & 7 & 8 & 9 & \textbf{5} & 11 & \ldots & 1 & 2 & 3 & 4 \\ \hline
				\ldots & \ldots & \ldots & \ldots & \ldots & \ldots & \ldots & \ldots & \ldots & \ldots & \ldots & \ldots \\ \hline
				$N-2$ & $N$ & 0 & 1 & 2 & 3 & 4 & \ldots & $N-5$ & $N-4$ & $N-3$ & $N-2$ \\ \hline
				$N-1$ & 0 & 1 & 2 & 3 & 4 & 5 & \ldots & $N-4$ & $N-3$ & $N-2$ & $N-1$ \\ \hline
			\end{tabular}
		}
		\caption{Grundy values of $\sstar{1,k,\ell}$ for the game \csg{I_n}. The values where $\grundy_{I_N}(\sstar{1,k,\ell})\neq|\sstar{1,k,\ell}|\bmod(N+1)$ are bolded.}
		\label{tab:grundyValuesS1lk}
	\end{table}
	
	\begin{proof}
		In order to prove this result, we use induction on $(k,\ell)$ with lexicographic order.
		If $k$ or $\ell=0$, $\sstar{1,k,\ell}$ is simply the path $P_{\ell+k+2}$ on $\ell+k+2$ vertices, having Grundy value $k+\ell+2 \bmod (N+1)$.
		
		Assume now that $k,\ell \neq 0$, and assume that the formula holds for all $(i,j) < (k,\ell)$. We prove that the Grundy value of $\sstar{1,k,\ell}$ satisfies the formula of Lemma~\ref{lem:S1lkWithSmalllk}.
		
		By symmetry, we can assume that $k\leq \ell$.
		
		\medskip
		
		\noindent\textbf{Case 1:} $k+\ell \leq N-2$.
		
		From $\sstar{1,k,\ell}$, then there are exactly three types of possible moves:
		
		\begin{itemize}
			\item Type 1 move: removing the leaf, leaving $P_{k+\ell+1}$ which has Grundy value $k+\ell+1$ (only one such move).
			\item Type 2 move: removing the leaf, the central vertex, one of the two paths and some vertices from the other path, leaving a path $P_{i}$, for $i \in \llbracket 0,\ell \rrbracket$ (hence Type 2 moves can have all the Grundy values between $0$ and $\ell$).
			\item Type 3 move: removing some vertices from one of the paths, leaving $\sstar{1,i,\ell}$ for $i \in \llbracket 0,k-1 \rrbracket$, or  leaving $\sstar{1,k,j}$ for $j \in \llbracket 0,\ell-1 \rrbracket$, whose Grundy values may be computed by induction.
		\end{itemize}
		
		Depending on the parities of $k$ and $\ell$, the Grundy values of Type 3 moves may differ. The following subcases are dedicated to computing these Grundy values.
		
		\medskip

		\textbf{Subcase 1.1:} $k,\ell$ are odd.
		
		By induction hypothesis, the options $\sstar{1,i,\ell}$ with $i\in \llbracket 0,k-1\rrbracket$ have Grundy values:
		\begin{itemize}
			\item $\ell+i$ when $i$ is odd, giving all even values between $\ell+1$ and $\ell+k-2$;
			\item $\ell+i+2$ when $i$ is even, giving all odd values between $\ell+2$ and $\ell+k+1$.
		\end{itemize}
		
		That is to say, these options have all the values between $\ell+1$ and $\ell+k-1$, as well as value $\ell+k+1$.
		Similarly, the options $\sstar{1,k,j}$ with $j\in \llbracket 0,\ell-1 \rrbracket$ have all the Grundy values between $k+1$ and $\ell+k-1$, as well as value $\ell+k+1$.
		
		Finally, among the options of $\sstar{1,k,\ell}$, we have all the values between $0$ and $\ell+k-1$, except the value $k+\ell$, hence $\grundy(\sstar{1,k,\ell})=k+\ell$.
		
		\medskip

		\textbf{Subcase 1.2:} $k=\ell$ and $k$ is even.
		
		By induction hypothesis, the option $\sstar{1,i,k}$ have Grundy values $k+i+2>k+1$, for all $i\in \llbracket 0,k-1 \rrbracket$.
		Type 2 moves imply that $\grundy(\sstar{1,k,k}) \geq k+1$. Since no option of $\sstar{1,k,k}$ has Grundy value $k+1$, this means that $\grundy(\sstar{1,k,k})=k+1$.
		
		\medskip

		\textbf{Subcase 1.3:} $k+\ell \leq N-2$, $k$ odd, $\ell$ even.
		
		By induction hypothesis, the options $\sstar{1,i,\ell}$ with $i\in \llbracket 0,k-1\rrbracket$ have Grundy values $\ell+i+2$, giving all the values between $\ell+2$ and $\ell+k+1$ (since $i<k<\ell$ the case $\ell=i$ does not happen). As before, the options $\sstar{1,k,j}$ with $j\in \llbracket 0,j-1 \rrbracket$ give all the values between $k+1$ and $k+\ell$ and in particular the value $\ell+1$ since $k\geq 1$.
		Finally, all the values up to $\ell+k+1$ appear as options, except the value $\ell+k+2$, leading to $\grundy(\sstar{1,k,\ell})=k+\ell+2$.
		
		\medskip
		
		\textbf{Subcase 1.4:} $k+\ell \leq N-2$, $k$ even, $\ell$ odd.
		
		By induction, the options $\sstar{1,i,\ell}$  give all the values between $\ell+1$ and $\ell+k$. Hence all the values between $0$ and $\ell+k$ are present in the options. Remember that value $k+\ell+1$ is done by the Type 1 move. Finally, the Grundy value of options $\sstar{1,k,j}$ are smaller than $k+\ell+1$, and we have again $\grundy(\sstar{1,k,\ell})=k+\ell+2$.
		
		\medskip
		
		\textbf{Subcase 1.5:} $k+\ell \leq N-2$, $k$ even, $\ell$ even, $k\neq \ell$.
		
		The options $\sstar{1,i,\ell}$  give all the values between $\ell+2$ and $\ell+k+1$. The value $\ell+1$ is given by option $\sstar{1,k,\ell-k-1}$ (which exists since $k<\ell$). Thus all the values between $0$ and $\ell+k+1$ are present in the options. Finally, the Grundy value of options $\sstar{1,k,j}$ are smaller than $k+\ell+1$. Thus, we have again $\grundy(\sstar{1,k,\ell})=k+\ell+2$.

		\medskip
		
		\noindent\textbf{Case 2:} $k+\ell \geq N-1$ and $1 \leq k,\ell \leq N-1$.

		Let $j=k+\ell+2 - N+1$. Then $0\leq j \leq N-1$. We prove that $\sstar{1,k,\ell}+P_j$ is a $\outcomeP$-position.
		
		Assume that this is not true and take the smallest $j$ for which this result is not true.
		
		If the first player takes $c$ vertices in $\sstar{1,k,\ell}$ then the second player can always take $N+1-c$ vertices in the same component and move to $P_j$. Then the sum is a $\outcomeP$-position.
		Indeed, if the first player takes $c$ vertices on the path of size $k$ and $c\leq k$, then the second player can take the $k-c\leq N-2$ vertices remaining on the path of size $k$, the central vertex, the leaf, and some vertices on the path of size $\ell$ to obtain $P_j$. If the first player plays from $\sstar{1,k,\ell}$ to a path, this one has $k+\ell+2-c>j$ vertices and the second player can take $N+1-c$ vertices on it to go to $P_j$.
		
		Assume now that the first player plays $c$ vertices on $P_j$ to $P_{j'}$. By definition of $j$, we must have either $k\geq j$ or $\ell\geq j$. Assume that $k\geq j$. Then the first player takes $c$ vertices on the path of size $k$ to play to $\sstar{1,k',j}$. We still have $k'+\ell \geq N-1$ and by minimality, $\sstar{1,k',\ell}+P_{j'}$ is a losing position.

	\end{proof}

	The following lemma gives the periodicity of values 0 and 1. It completes Lemma \ref{lem:sizeOfGraph} for $S_{1,k,l}$.

	\begin{lemma}\label{lem:01s1kl}
		Let $k,\ell$  be two integers.
		
		If $|S_{1,k,\ell}|=0 \bmod (N+1)$ then $\grundy_{I_N}(S_{1,k,\ell})=0$.
		
		If $|S_{1,k,\ell}|=1 \bmod (N+1)$ then $\grundy_{I_N}(S_{1,k,\ell})=1$.
		
		Otherwise, $\grundy_{I_N}(S_{1,k,\ell})>1$.
	\end{lemma}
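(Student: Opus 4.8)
The plan is to argue by strong induction on the number of vertices $n=|S_{1,k,\ell}|=k+\ell+2$, establishing the three conclusions simultaneously. First I would dispose of the degenerate situations. If $k=0$ or $\ell=0$ then $S_{1,k,\ell}$ is the path $P_n$ and all three statements follow from the path formula $\grundy_{I_N}(P_m)=m\bmod(N+1)$ (Lemma~\ref{lem:In-paths}); and whenever $n\leq N+2$ the claim is exactly what Lemma~\ref{lem:sizeOfGraph} gives (value $0$ for $n\equiv 0$, i.e. $n=N+1$; value $1$ for $n\equiv 1$, i.e. $n=N+2$; value $\geq 2$ for $2\leq n\leq N$). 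So I may assume $k,\ell\geq 1$ and $n\geq N+3$, and, by symmetry, $k\leq\ell$. The structural facts I rely on throughout are that every option of $S_{1,k,\ell}$ is again either a path $P_m$ or a subdivided star $S_{1,k',\ell'}$, so the induction stays inside the class where the statement is meaningful, and that every legal move deletes between $1$ and $N$ vertices, hence shifts the size by a fixed nonzero residue modulo $N+1$.

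Writing $r=n\bmod(N+1)$, I would then split into three cases. For $r=0$: since a move removes $j\in\{1,\dots,N\}$ vertices, every option has size $\equiv -j\not\equiv 0\pmod{N+1}$, so by the induction hypothesis (for star-options) and the path formula (for path-options) every option has positive Grundy value; hence the mex is $0$. For $r=1$: the same residue count shows every option has size in $\{0,2,3,\dots,N\}\pmod{N+1}$, never $\equiv 1$, so no option has Grundy value $1$; moreover deleting the single leaf is always legal and yields $P_{n-1}$ with $n-1\equiv 0$, hence Grundy value $0$ by the path formula. Thus $0$ is an option value and $1$ is not, giving mex $1$.

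The case $r\in\{2,\dots,N\}$ is where the real work lies, since now I must exhibit both an option of Grundy value $0$ and one of Grundy value $1$, i.e. reach a subgraph of size $\equiv 0$ and one of size $\equiv 1$. Here I would split on the length of the longer path. If $k,\ell<N$, the graph falls under Lemma~\ref{lem:S1lkWithSmalllk} (or Lemma~\ref{lem:sizeOfGraph} when $n\leq N$), which gives the exact value; one checks that in each of its sub-formulas the value is at least $2$ whenever $r\geq 2$, so $\grundy>1$ as required. If instead $\ell\geq N$, I can delete any number $1,\dots,N$ of vertices from the end of $P_\ell$: deleting exactly $r$ vertices leaves $S_{1,k,\ell-r}$ of size $\equiv 0$, and deleting exactly $r-1$ vertices leaves a subgraph of size $\equiv 1$ — both moves are legal since $r,r-1\in\{1,\dots,N\}\subseteq\{1,\dots,\ell\}$ — and by induction (or the path formula) these options have Grundy values $0$ and $1$ respectively, forcing $\grundy\geq 2$.

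The step I expect to be the main obstacle is precisely this last case: when both paths are short one cannot always realize a deletion of exactly $r$ (or $r-1$) vertices inside a single path, and a direct reachability argument would require a delicate subcase analysis of the ``centre-removing'' moves. The clean way around it is to lean on the already-computed exact values of Lemma~\ref{lem:S1lkWithSmalllk} in the short-paths regime and to reserve the elementary reachability argument for the regime $\max(k,\ell)\geq N$, where removing from the long path realises every residue. The remaining routine points to verify are that the enumeration of option types is complete — in particular that a connected deletion leaving the graph connected can only shave suffixes of the branches or swallow the centre together with two whole branches — and that the base cases line up with the stated residues.
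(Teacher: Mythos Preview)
Your proof is correct, and in the cases $r=0$ and $r=1$ it matches the paper's argument essentially verbatim. The difference lies in the case $r\geq 2$. You split according to whether $\max(k,\ell)\geq N$ (remove $r$ and $r-1$ vertices from the long branch) or $k,\ell<N$ (fall back on the explicit values of Lemma~\ref{lem:S1lkWithSmalllk}, which in the regime $k+\ell\geq N+1$ gives $\grundy=r\geq 2$). The paper instead gives a single self-contained argument: it shows directly that in $S_{1,k,\ell}$ with $n>N+2$ one can \emph{always} remove exactly $r$ vertices and exactly $r-1$ vertices. The key observation you were missing is that if a connected removal of $i$ vertices is impossible, then necessarily $k=\ell=i-1$ (otherwise either one branch is long enough, or one can take the leaf, the centre, one whole short branch, and a prefix of the other); plugging this into $n\equiv r\pmod{N+1}$ yields $2r\equiv r$, a contradiction for $i=r$, and forces $r=2$, $n=2$ for $i=r-1$, again a contradiction. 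So the paper avoids the appeal to Lemma~\ref{lem:S1lkWithSmalllk} altogether. Your route is perfectly valid since that lemma is proved earlier and independently, but the paper's argument is more elementary and would survive even without the exact small-case computation.
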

	
	\begin{proof}
		
		We prove the whole lemma by induction on $(k,l)$ (in lexicographic order). Note that this result is trivially true for paths. By Lemma \ref{lem:sizeOfGraph}, the result is true if $|S_{1,k,\ell}|=k+\ell+2\leq N+2$.
		
		Let $k,\ell$ with $k+\ell+2>N+2$.
		Assume first that $k+\ell+2=0\bmod (N+1)$. All the moves lead to a graph which is a path or a subdivided star with a number of vertices not congruent to 0 modulo $N+1$, thus, by induction the Grundy value is positive. Hence, $\grundy_{I_N}(S_{1,k,\ell})=0$.
		
		Assume now that $k+\ell+2=1\bmod (N+1)$. As in the previous case, all the moves lead to a graph which is a path or a subdivided star with a number of vertices not congruent to 1 modulo $N+1$, and by induction the Grundy value is not equal to 1. Furthermore, it is always possible to remove one vertex, leading to a graph with 0 vertices modulo $N+1$ that has by induction Grundy value 0. Hence, $\grundy_{I_N}(S_{1,k,\ell})=1$.
		
		Finally, assume that $k+\ell+2=i\bmod (N+1)$ with $i>1$.
		If one cannot remove $i$ vertices, it means that $k=\ell=i-1$. Hence the total number of vertices is $2i$ and must be congruent to $i$ modulo $N+1$. Since $i\neq 0$, this is not possible. Hence one can remove $i$ vertices and then the resulting graph has Grundy value 0.
		Similarly, if one cannot remove $i-1$ vertices, it means that $k=\ell=i-2$. Hence the total number of vertices is $2i-2$ and must be congruent to $i$ modulo $N+1$. It means that $i=2 \bmod (N+1)$ and so $i=2$. But then the graph is just $K_2$, which is not possible since there are at least $N+2$ vertices. Hence one can  always remove $i-1$ vertices and the resulting graph has Grundy value 1. Finally, $\grundy_{I_N}(S_{1,k,\ell})>1$.
	\end{proof}

	We now prove the "otherwise" part of Theorem \ref{thm:S1lk} which is the simpliest case when both paths can be reduced modulo $N+1$.

	\begin{lemma}
		\label{lem:S1lkInternalPeriodicity}
		Let $N\geq 3$, $k,\ell$ be two nonnegative integers satisfying the property:
		
		\begin{center}
			$(P)$ \quad $k\leq N$ or $\ell\leq N$ or $\ell$ and $k$ are not congruent to $N$ modulo $N+1.$
		\end{center}
		Then, we have $\grundy_{I_N}(\sstar{1,k,\ell})=
		\grundy_{I_N}(\sstar{1,k \bmod (N+1),\ell \bmod (N+1)})$.
	\end{lemma}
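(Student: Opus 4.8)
The plan is to reduce the statement to a single \emph{reduction step}: appending $N+1$ vertices to one ray leaves the Grundy value unchanged, i.e. for $\ell\ge N+1$, $\grundy_{I_N}(\sstar{1,k,\ell})=\grundy_{I_N}(\sstar{1,k,\ell-(N+1)})$, and symmetrically in $k$. Granting this, the lemma follows by iteration. Reducing a ray by $N+1$ preserves its residue modulo $N+1$, hence preserves the target $\sstar{1,k\bmod(N+1),\ell\bmod(N+1)}$; it also preserves property $(P)$, since if $(k,\ell-(N+1))$ violated $(P)$ we would have $k,\ell-(N+1)>N$ with $k\equiv N$ or $\ell-(N+1)\equiv N$, forcing $(k,\ell)$ to violate $(P)$ as well. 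So starting from any $(k,\ell)$ satisfying $(P)$ I first bring $\ell$ down to $\ell\bmod(N+1)\le N$ and then $k$ down to $k\bmod(N+1)\le N$, staying inside $(P)$ throughout, which yields the claimed equality.

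To prove one reduction step I follow the template of Proposition~\ref{prop:Stk_general}. Setting $G=\sstar{1,k,\ell-(N+1)}$ and letting $u'$ be the endpoint of the reduced ray, so that $\sstar{1,k,\ell}=\app{G}{u'}{N+1}$, it suffices to show $G+\app{G}{u'}{N+1}$ is a $\outcomeP$-position. I argue by minimal counterexample in lexicographic order on the pair of ray lengths. Every connected subgraph of $G$ containing $u'$ is either a path, for which $(N+1)$-periodicity is Lemma~\ref{lem:In-paths}, or a smaller subdivided star $\sstar{1,k',\cdot}$ with $k'<k$, for which the reduction step holds by minimality; thus Proposition~\ref{prop:winning_move} applies and any winning move must be played in the copy of $G$, delete $u'$, and leave at least two vertices. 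A move merely shortening the reduced ray, sending $G$ to $\sstar{1,k,\ell-(N+1)-j}$ with $j$ at most that ray's length, is answered by the mirror move deleting $j$ vertices from the appended ray; the resulting position has the form $G''+\app{G''}{u'}{N+1}$ with $G''$ strictly smaller, hence is a $\outcomeP$-position by minimality.

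The crux, with no analogue in the simple-star argument of Proposition~\ref{prop:Stk_general}, is the family of moves deleting the central vertex: because a ray of $\sstar{1,k,\ell-(N+1)}$ is a genuine path, a single legal move of size at most $N$ can remove the leaf, the centre and one whole ray, leaving a path $P_{k'}$, and such a move cannot be mirrored since the appended ray is too long to be swallowed in one move. For these I must show directly that the reached position $P_{k'}+\app{G}{u'}{N+1}$ is an $\outcomeN$-position, i.e. that $\grundy_{I_N}(\app{G}{u'}{N+1})\ne k'\bmod(N+1)$. Such center-deleting moves can occur only when one of the two rays has at most $N-2$ vertices, and verifying this inequality in that regime is what I expect to be the main obstacle. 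Two ingredients should close it: Lemma~\ref{lem:01s1kl} fixes the Grundy value to $0$ or $1$ whenever the number of vertices is $\equiv 0,1\pmod{N+1}$ (so those residues are immediate and in the others the appended star has Grundy value at least $2$, settling every path-target $P_{k'}$ with $k'\equiv 0,1$), while the explicit values of the reached small stars from Lemma~\ref{lem:S1lkWithSmalllk} separate the remaining cases. It is precisely the exclusion carried by $(P)$ — the forbidden configuration being $k,\ell>N$ with $k\equiv N$ or $\ell\equiv N$ — that rules out the coincidences in which this inequality would fail, which is also exactly why full reduction breaks down there and must be treated outside $(P)$.
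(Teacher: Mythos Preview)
Your plan differs from the paper's: the paper shows directly that $\sstar{1,k,\ell}+\sstar{1,k_0,\ell_0}$ (both rays reduced at once) is a $\outcomeP$-position, running through all first-player moves in either component and exhibiting explicit replies, with Lemma~\ref{lem:S1lkWithSmalllk} invoked only in the one subcase where the first player collapses the small component to a path. Your route---reducing one ray at a time and leaning on Proposition~\ref{prop:winning_move}---is a reasonable idea, but as written it has two genuine gaps.

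First, and most seriously, the ``crux'' is not proved. You correctly isolate the centre-deleting moves as the obstruction that has no analogue in Proposition~\ref{prop:Stk_general}, and then say that Lemmas~\ref{lem:01s1kl} and~\ref{lem:S1lkWithSmalllk} ``should close it''. That is an outline, not an argument. This is exactly where $(P)$ does real work, and the verification is not automatic: beyond the residues $0$ and $1$ handled by Lemma~\ref{lem:01s1kl}, you must exhibit a concrete reply in $\app{G}{u'}{N+1}$ whose Grundy value matches the reached path, and check that the explicit formula of Lemma~\ref{lem:S1lkWithSmalllk} gives what you need. The paper carries this out (its Subcase~2.3), and it is the heart of the proof.

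Second, your invocation of Proposition~\ref{prop:winning_move} is not justified. That proposition requires $\grundy(G')=\grundy(\app{G'}{u'}{N+1})$ for \emph{every} proper connected subgraph $G'$ of $G$ containing $u'$. For $G=\sstar{1,k,\ell-(N+1)}$ these include $\sstar{1,k',\ell-(N+1)}$ for all $k'<k$, so you need the reduction step for every pair $(k',\ell)$. But $(k',\ell)$ need not satisfy $(P)$ even when $(k,\ell)$ does: take $k,\ell>N$ with $k\not\equiv N$ and $\ell\not\equiv N$, and $k'=2N+1<k$. For such pairs the one-step reduction can genuinely fail (this is precisely the phenomenon that forces Lemma~\ref{lem:S1lkExternalPeriodicity}), so ``by minimality'' is not available and Proposition~\ref{prop:winning_move} cannot be applied as you do. The same issue recurs in your mirroring reply to ray-shortening moves, where the resulting pair $(k,\ell-j)$ may land outside $(P)$. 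The paper's both-rays-at-once argument sidesteps this because its induction always returns to pairs with the same residues, for which $(P)$ is inherited.
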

	
	\begin{proof}
		Let $k,\ell,N$ be integers satisfying $(P)$. Let $k_0=k \bmod{N+1}$ and $\ell_0=\ell \bmod{N+1}$.
		
		We want to prove that $\sstar{1,k,\ell}+\sstar{1,k_0,\ell_0}$ is a $\outcomeP$-position. Consider a minimal counterexample, when minimizing $k+\ell$. We consider all the possible winning moves for the first player.
		
		\medskip
		
		{\bf Case 1:} The first player plays in the first component $\sstar{1,k,\ell}$.
		
		\medskip
		
		\noindent {\bf Subcase 1.1:} The first player takes $c$ vertices in the path of size $\ell$ leading to $\sstar{1,k,\ell'}$ with $\ell'<\ell$.

		If $\ell>\ell_0$, then the second player can answer in the first component by $\sstar{1,k,\ell-(N+1)}$. Since $\ell-(N+1)=\ell_0 \bmod{N+1}$ and since $k,\ell-(N+1)$ satisfy $(P)$, by minimality the sum is a $\outcomeP$-position.
		
		Assume now that $\ell=\ell_0$. Then the second player answers in the second component to $\sstar{1,k_0,\ell'}$. Since $k,\ell'$ satisfy $(P)$ ($\ell'\leq N$), then the sum is by minimality a $\outcomeP$-position.
		
		By permuting $k$ and $\ell$, we can prove the symmetric case.
		\medskip
		
		\noindent {\bf Subcase 1.2:} The first player removes the leaf and thus move the first component to $P_{k+\ell+1}$. Then the second player can move the second component to $P_{k_0+\ell_0+1}$. Since $k=k_0 \bmod (N+1)$ and $\ell=\ell_0 \bmod (N+1)$, then these two paths have the same Grundy value and their sum is a $\outcomeP$-position.
		
		\medskip
		
		\noindent {\bf Subcase 1.3:} The first player removes the leaf, the path of size $k$, the central vertex and some vertices on the path of size $\ell$, leading the first component to $P_i$ with $i\leq \ell$. Then $k=k_0$ and the second player can remove the same number of vertices, leading to $P_{i-(\ell-\ell_0)}$, which has the same Grundy value as $P_i$.
		
		By permuting $k$ and $\ell$ we get the symmetric case.
		
		\medskip
		
		{\bf Case 2:} The first player plays in the second component $\sstar{1,k_0,\ell_0}$.
		
		\medskip
		
		\noindent {\bf Subcase 2.1:} The first player takes $c$ vertices in the path of size $\ell_0$ leading to $\sstar{1,k_0,\ell'_0}$ with $\ell'_0<\ell_0$.

		Let $\ell'=\ell-c$. Then $k,\ell'$ satisfy $(P)$. Indeed, $\ell'\neq N \bmod{N+1}$ since $\ell'=\ell'_0\bmod {N+1}$ and $0\leq \ell'_0<N$.
		Therefore, if the second player answer by playing in the first component to $\sstar{1,k,\ell'}$ we get a $\outcomeP$-position by minimality.
		
		By permuting $k$ and $\ell$, we can prove the symmetric case.
		\medskip
		
		\noindent {\bf Subcase 2.2:} The first player removes the leaf and thus moves the second component to $P_{k_0+\ell_0+1}$. Then the second player can move the first component to $P_{k+\ell+1}$. Since $k=k_0 \bmod (N+1)$ and $\ell=\ell_0 \bmod (N+1)$, then these two paths have the same Grundy value and their sum is a $\outcomeP$-position.
		
		\medskip
		
		\noindent {\bf Subcase 2.3:} The first player removes the leaf, the path of size $k_0$, the central vertex and some vertices on the path of size $\ell_0$, leading the first component to $P_i$ with $i\leq \ell_0$. Let $c$ be the total number of vertices taken by the first player.
		If $k=k_0$, then the second player can play in the first component to $P_{i+\ell-\ell_0}$, which has the same Grundy value as $P_i$.
		Hence we can assume that $k>k_0$. 
		Consider the position $\sstar{1,k-c,\ell}$.
		
		Let $k'_0=k-c \bmod (N+1)$. We  have $k'_0=k_0-c+N+1$ and $i+c=k_0+\ell_0+2$.
		Then $k'_0+\ell_0=k_0-c+N+1+\ell_0 =i+N+1$. In particular, $k'_0+\ell_0\geq N-1$.
		Hence, we are in the last case of Lemma \ref{lem:S1lkWithSmalllk}, and, by induction, $$\grundy(\sstar{1,k-c,\ell})=\grundy(\sstar{1,k'_0,\ell_0})=k'_0+\ell_0+2 \bmod (N+1)=i=\grundy(P_i).$$
		
		By permuting $k$ and $\ell$, we can prove the symmetric case.
	\end{proof}
	
	We will now study the case where $(P)$ is not satisfied, which are the "if" parts of Theorem~\ref{thm:S1lk}. Since both parts are symmetrical, we only need to prove one of them.
	
	\begin{lemma}
		\label{lem:S1lkExternalPeriodicity}
		Let $N\geq 3$, $k,\ell$ be two nonnegative integers such that $k>N$ and $\ell \equiv N \bmod (N+1)$. Then, we have $\grundy_{I_N}(\sstar{1,k,\ell})=\grundy_{I_N}(\sstar{1,(k \bmod (N+1))+N+1,N})$.
	\end{lemma}
	
	\begin{proof}
		
		We will first prove that the options $\sstar{1,i,N}$ with $i<N-2$ of $\sstar{1,k,N}$ (obtained by playing on the branch of size $N$) are not useful to compute $\grundy(\sstar{1,k,N})$, when $N<k<2N$ . In order to prove this, we show how to compute the Grundy value of $\sstar{1,k,N}$ for $k \in \llbracket 1,2N \rrbracket$. 
		
		Let $r_N = \left \lfloor \frac{N}{2} \right \rfloor$ if $N \equiv 2,3 \bmod 4$, and $\left \lfloor \frac{N-2}{2} \right \rfloor$ otherwise. 
		
		This first technical claim will let us make explicit the Grundy values of the options of $\sstar{1,k,N}$ reached when playing on the path of length $N$:
		
		
		
		
		\begin{claim}
			\label{clm:valuesOfColumn}
			Let $k \in \llbracket 1,2N-1 \rrbracket$ be a positive integer with $k \not\in \{N-1,N,N+1\}$, and $k_0 = k \bmod (N+1)$.
			\begin{enumerate}
				\item If $k_0 \in \llbracket 1,r_N \rrbracket$ is even, then $\sstar{1,k,N}$ has options of all Grundy values in the interval $\llbracket 0,N \rrbracket$, except $2k_0+2$, by removing vertices from the path of length $N$.
				\item If $k_0 \in \llbracket r_N+1,N-2 \rrbracket$ is even, then $\sstar{1,k,N}$ has options of all Grundy values in the interval $\llbracket 0,N \rrbracket$, except $k_0+1$, by removing vertices from the path of length $N$.
				\item If $k_0$ is odd, then $\sstar{1,k,N}$ has options of all Grundy values in the interval $\llbracket 0,N \rrbracket$, except $N-1$ (if $N$ is odd), or $N$ (if $N$ is even), by removing vertices from the path of length $N$.
			\end{enumerate}
		\end{claim}
		
		\begin{proof}
			Let $k \in \llbracket 1,N-2 \rrbracket$ be a positive integer.
			We will make use of the Grundy values given in Lemma~\ref{lem:S1lkWithSmalllk}.
			
			If $k \in \llbracket 1,r_N \rrbracket$ is even, then every subdivided star $\sstar{1,k,i}$ (with $i \in \llbracket 0,N-1 \rrbracket$) has Grundy value $i+k+2 \bmod (N+1)$, except the subdivided star $\sstar{1,k,k}$ which has Grundy value $k+1$. Note that we have $2k+2 \leq N-2$ (by definition of $r_N$ and since $k$ is even).
			Thus, the subdivided star $\sstar{1,k,N}$ has options with all the Grundy values in the set $\{k+2 \bmod (N+1), \ldots , k+1+N \bmod (N+1)\}$, except $2k+2$, and an option of Grundy value $k+1$. This proves the first part of the claim.
			
			Now, if $k \in \llbracket r_N+1,N-2 \rrbracket$ is even, then every subdivided star $\sstar{1,k,i}$ (with $i \in \llbracket 0,N-1 \rrbracket$) has Grundy value $i+k+2 \bmod (N+1)$.
			Thus, the subdivided star $\sstar{1,k,N}$ has options with all the Grundy values in the set $\{k+2 \bmod (N+1), \ldots , k+1+N \bmod (N+1)\}$, proving the second part of the claim.
			
			Finally, if $k$ is odd, then the subdivided stars $\sstar{1,k,i}$ have the following Grundy values: $i+k$ if $i < N-k-1$ and $i$ is odd; and $i+k+2 \bmod (N+1)$ otherwise.
			Thus, the subdivided star $\sstar{1,k,N}$ has options with all the Grundy values in the set $\{0,\ldots,k\}$ (when $i \geq N-k-1$), and in the set $\{k+1, \ldots, N-2\}$. Furthermore, if $N$ is even then it has an option of Grundy value $N-1$, and if $N$ is odd then it has an option of Grundy value $N$. Indeed, the Grundy values of $\sstar{1,k,0},\sstar{1,k,1},\sstar{1,k,2},\ldots,\sstar{1,k,N-k-4},\sstar{1,k,N-k-3},\sstar{1,k,N-k-2}$ are $k+2,k+1,k+4,\ldots$ and either $\ldots,N-4,N-1,N-2$ if $N$ is even or $\ldots,N-2,N-3,N$ if $N$ is odd. This proves the third part of the claim.
			
			If $k \in \llbracket N+2,2N-1 \rrbracket$, then the result holds by Lemma~\ref{lem:S1lkInternalPeriodicity}: the options of $\sstar{1,k,N}$ when removing vertices from the path of length $N$ have the same Grundy values as the equivalent options of $\sstar{1,k-(N+1),N}$.
		\end{proof}
		
		We are now ready to compute the values of $\grundy(\sstar{1,k,N})$ for $k\in \llbracket 1,2N \rrbracket$. 
		
		If $k=a(N+1)+b$ with $a \geq 0$ and $b \in \llbracket 1,N-1 \rrbracket$, then let $x_k$ be the number of subdivided stars $\sstar{1,i,N}$ with $i \in \llbracket a(N+1)+1,k-1 \rrbracket$ such that $\grundy(\sstar{1,i,N})>N$.
		
		\begin{claim}\label{claim:2N}
			Let $N\geq 3$ be a nonnegative integer, and $k \in \llbracket 1,2N \rrbracket$, $k \notin \{N,N+1\}$. Let $m = \mex(\{\grundy(\sstar{1,k,i}) | i<N\})$.
			We have the following:
			$$
			\grundy(\sstar{1,k,N}) = \left\{
			\begin{array}{ll}
			N & \mbox{if } k=N-1 \mbox{ or } k=2N \\
			N-1 & \mbox{if } k=N-2 \mbox{ or } k=2N-1 \\
			N+x_k+1 & \mbox{if } k \in \llbracket 1,r_N \rrbracket \mbox{ or } m \geq N-1 \\
			m & \mbox{otherwise} 
			\end{array}
			\right.
			$$
		\end{claim}
		
		\begin{proof}
			We prove the result by way of contradiction. Assume that $k \in \llbracket 1,2N \rrbracket$ is the smallest positive integer such that the result does not hold. Let $k_0 = k \bmod (N+1)$.
			From $\sstar{1,k,N}$, there are four kinds of moves:
			\begin{itemize}
				\item Type 1 move: removing the leaf, leaving $P_{N+1+k}$ which has Grundy value $k \bmod (N+1)$ (thus this option will never lead to a contradiction);
				\item Type 2 move: removing vertices from the path of length $N$, leaving stars $\sstar{1,k,i}$ for $i \in \llbracket 0,N-1 \rrbracket$;
				\item Type 3 move: removing vertices from the path of length $k$, leaving stars $\sstar{1,i,N}$ for $i \in \llbracket max(0,k-N),k-1 \rrbracket$;
				\item Type 4 move: if $k \leq N-2$, removing the path of length $k$, the leaf, the central vertex, and some vertices from the path of length $N$.
			\end{itemize}
			
			First, note that if $k \geq N+1$, then $x_k \leq x_{k-(N+1)}$. Indeed, by minimality of $k$, there are at most as many subdivided stars $\sstar{1,i,N}$ with Grundy values greater than $N$ when $i \in \llbracket N+2,k-1 \rrbracket$ than when $i \in \llbracket 1,k-(N+1)-1 \rrbracket$.
			
			Observe that if $k=N$ (resp. $k=N+1$) then we have $|\sstar{1,N,k}|=2N+2$ (resp. $|\sstar{1,N,k}|=2N+3$) and by Lemma~\ref{lem:01s1kl} we have $\grundy(\sstar{1,N,k})=0$ (resp. $\grundy(\sstar{1,N,k})=1$). So $k \notin \{N,N+1\}$.
			
			Assume that $k_0=N-1$, then by Lemmas~\ref{lem:S1lkWithSmalllk} and~\ref{lem:S1lkInternalPeriodicity}, Type~2 moves leave subdivided stars with all Grundy values in $\llbracket 0,N-1 \rrbracket$, so $\grundy(\sstar{1,k,N}) \geq N$. Furthermore, Type~3 moves will leave subdivided stars with Grundy values either smaller or greater than $N$ by minimality of $k$. Thus, $\grundy(\sstar{1,k,N})=N$, a contradiction. So $k \notin \{N-1,2N\}$.
			
			Now let $k \in \llbracket 1,2N-1 \rrbracket$, $k \notin \{N-1,N,N+1\}$. Assume first that $k_0$ is odd. Then, by Claim~\ref{clm:valuesOfColumn}, the Type 2 moves leave subdivided stars with all Grundy values in $\llbracket 0,N \rrbracket$, except $N-1$ or $N$, depending on the parity of $N$.
			
			Now if $k_0 < N-2$, then there are two cases:
			\begin{enumerate}
				\item If $k < N-2$ then $\sstar{1,k,N}$ has options of Grundy values $N-1$ and $N$ by Type~4 moves: leaving $P_{N-1}$ requires removing $k+2+1 \leq N$ vertices, and leaving $P_N$ requires removing $k+2 < N$ vertices.
				\item If $k \in \llbracket N+2,2N-2 \rrbracket$, then Type 3 moves allow to reach $\sstar{1,N-2,N}$ and $\sstar{1,N-1,N}$, which have Grundy values $N-1$ and $N$.
			\end{enumerate}
			Thus, the Grundy value of $\sstar{1,k,N}$ is greater than $N$. Furthermore, the Type~3 moves enable to reach $x_k$ options of Grundy values $N+1,\ldots,N+x_k$ by definition of $x_k$. And, if $k \in \llbracket N+2,2N-1 \rrbracket$, since $x_k \leq x_{k-(N+1)}$, the subdivided star $\sstar{1,i,N}$ such that $\grundy(\sstar{1,i,N})=N+x_k+1$ verifies $i \leq k-(N+1)$, and as such is not an option of $\sstar{1,k,N}$. This implies that $\grundy(\sstar{1,k,N})=N+x_k+1$, a contradiction. Thus if $k_0$ is odd then $k_0 = N-2$.
			
			If $k_0 = N-2$, then $N$ is odd and $\sstar{1,k,N}$ has no option of Grundy value $N-1$ from Type~2 moves. Furthermore, Type~3 moves cannot lead to an option of Grundy value $N-1$.
			Indeed, those options have either $P_{N-1}$ (if $k=N-2$) or $\sstar{1,N-2,N}$ (if $k=2N-1$) as an option, and those have Grundy value $N-1$ by minimality of $k$. Those cannot be options of $\sstar{1,k,N}$ since it would require removing $N+1$ vertices.
			Finally, if $k=N-2$ then Type 4~moves can only leave $P_N$. Thus, $\grundy(\sstar{1,k,N})=N-1$, a contradiction.
			
			This implies that $k_0$ is even. Assume first that $k_0 \in \llbracket 1,r_N \rrbracket$. Then by Claim~\ref{clm:valuesOfColumn}, Type~2 leave subdivided stars with all Grundy values in $\llbracket 0,N \rrbracket$, except $2k+2$, thus $\grundy(\sstar{1,k,N}) \geq 2k+2$. We recall that $2k+2 \leq N-2$.
			There are two cases:
			\begin{enumerate}
				\item If $k \in \llbracket 1,r_N \rrbracket$, then the subdivided star $\sstar{1,k,N}$ has $P_{2k+2}$ as an option, by removing the path of length $k$, the leaf, the central vertex and $N-(2k+2)$ vertices from the path of length $N$. This move removes $N-k$ vertices, so this option is always available. Thus, the star $\sstar{1,k,N}$ has options of all Grundy values in $\llbracket 0,N \rrbracket$. Since Type~3 moves enable to reach $x_k$ options of Grundy values $N+1,\ldots,N+x_k$ by definition of $x_k$, we have $\grundy(\sstar{1,k,N})=N+x_k+1$, a contradiction.
				\item If $k \in \llbracket N-2,N+1+r_N \rrbracket$, then by minimality of $k$, no option of $\sstar{1,k,N}$ has Grundy value $2k_0+2$: the only options with a Grundy value smaller than $N$ have Grundy values either of the form $2i+2$ and are smaller than $2k_0+2$, or of the form $i+1$ with $i$ even, and thus are odd and different from $2k_0+2$. All this implies that $\grundy(\sstar{1,k,N}) = 2k_0+2$, a contradiction.
			\end{enumerate}
			
			Thus, we have $k_0 \in \llbracket r_N+1,N-2 \rrbracket$. By Claim~\ref{clm:valuesOfColumn} the second kind of move leaves stars with all Grundy values in $\llbracket 0,N \rrbracket$, except $k+1$, thus $\grundy(\sstar{1,k,N}) \geq k+1$. The third kind of move can never leave a star with Grundy value $k+1$. Indeed, if $k \in \llbracket r_N+1,N-2 \rrbracket$ then by minimality of $k$, if an option by the third kind of move has a Grundy value smaller than $N$ then it is also smaller than $k+1$. Furthermore, if $k \in \llbracket N+2+r_N,2N-1 \rrbracket$ then any option by the third kind of move has $\sstar{1,k_0,N}$ as an option, which implies that all these options have a Grundy value different from $k_0+1$. Finally, if $k \in \llbracket r_N+1,N-2 \rrbracket$ then the fourth kind of move cannot leave $P_{k+1}$ since it would require removing $k+2+N-(k+1)=N+1$ vertices, which is impossible. Thus, the star $\sstar{1,k,N}$ has no option of Grundy value $k+1$, and as such we have $\grundy(\sstar{1,k,N})=k+1$, a contradiction.
			
			All of this implies that no such $k$ exists, so the claim is proven.
		\end{proof}

		We now prove the lemma. Let $k,\ell$ be two nonnegative integers such that $k>N$ and $\ell \equiv N \bmod (N+1)$. Let $k_0=k\bmod (N+1) + N+1$.
		
		A consequence of the previous proof is that the options $\sstar{1,i,N}$ with $i<N-2$ of $\sstar{1,k_0,N}$ (obtained by playing on the path of size $k_0$) are not useful to compute $\grundy(\sstar{1,k_0,N})$.
		
		We prove the result by induction on $k+\ell$. If $\ell=N$ and $k=N+1$ the result is clearly true.
		
		Compare the options of $\sstar{1,k,\ell}$ and $\sstar{1,k_0,N}$.
		By Lemma \ref{lem:S1lkInternalPeriodicity}, options $\sstar{1,k,\ell-i}$ and $\sstar{1,k_0, N-i}$ have the same Grundy values.
		By induction and thanks to Claim \ref{claim:2N}, options $\sstar{1,k-i,\ell}$ and $\sstar{1,k_0-i,N}$, if $i\leq k_0+2$, have the same Grundy values.
		Options $P_{k+\ell+1}$ and $P_{k_0+N+1}$ have the same Grundy values by periodicity on the path (Lemma~\ref{lem:In-paths}).
		
		Thus the only different options are $\sstar{1,k-i,\ell}$ and $\sstar{1,k_0-i,N}$ for $k_0+2<i\leq N$, if $k>k_0$ (if $k=k_0$, then they are the same by Lemma \ref{lem:S1lkInternalPeriodicity} and thus we are done). By the previous remark, the second ones are not used to compute $\grundy(\sstar{1,k_0,N})$. The first ones are different from $\grundy(\sstar{1,k-(N+1),\ell})$ since one can play from $\sstar{1,k-i,\ell}$ to $\sstar{1,k-(N+1),\ell}$. By induction, $\grundy(\sstar{1,k-(N+1),\ell})=\grundy(\sstar{1,k_0,N})$ and thus, we also have $\grundy(\sstar{1,k,\ell})=\grundy(\sstar{1,k_0,N})$.

	\end{proof}

	The proof of Theorem~\ref{thm:S1lk} is given by Lemma~\ref{lem:S1lkInternalPeriodicity} and Lemma~\ref{lem:S1lkExternalPeriodicity}.
	Note that the Grundy sequence is not purely periodic in general since for specific values of $N$ and $i$, we have  $\grundy_{I_N}(\sstar{1,N,N+1+i})\neq \grundy_{I_N}(\sstar{1,N,i})$ (for example, $\grundy_{I_8}(\sstar{1,8,2})=10$ whereas $\grundy_{I_8}(\sstar{1,8,11})=6$). However, when $N$ is small enough, the "bad cases" do not happen and the sequence is purely periodic. This is the case for $N=3$. We will see in the next section, that actually the game \csg{I_3} is purely periodic on all the subdivided stars.

	\subsection{Application: study of \csg{1,2,3} on subdivided stars}
	
	In this subsection, we use the previous general results to completely solve the game \csg{1,2,3} on subdivided stars.

	\begin{theorem}\label{thm:123}
		For all $\ell_1,\ldots,\ell_t \geq0$, $t\geq0$, we have:
		$$\grundy_{I_3}(\sstar{\ell_1,\ldots,\ell_t}) = \grundy_{I_3}(\sstar{\ell_1 \bmod 4,\ldots,\ell_t \bmod 4}).$$ 
	\end{theorem}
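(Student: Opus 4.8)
The plan is to prove the equivalent statement that appending a path $P_4$ (here $N+1=4$) to the outer end of any branch of a subdivided star leaves its Grundy value unchanged, and then to iterate. Concretely, I would argue by induction on the number of vertices that for every subdivided star $S=\sstar{\ell_1,\dots,\ell_t}$ and every index $j$ with $\ell_j\geq 4$ one has $\grundy_{I_3}(\sstar{\dots,\ell_j,\dots})=\grundy_{I_3}(\sstar{\dots,\ell_j-4,\dots})$; applying this repeatedly, together with the symmetry between the branches, reduces every $\ell_j$ modulo $4$ and yields the theorem (the degenerate cases $t\leq 1$ being paths, already handled by Lemma~\ref{lem:In-paths}). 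Writing $G$ for the star with branch $j$ shortened by $4$ and $u$ for the endpoint of that branch in $G$ (the centre, when $\ell_j=4$), we have $S=\app{G}{u}{4}$, so by the Sprague--Grundy theorem it suffices to show that $G+\app{G}{u}{4}$ is a $\outcomeP$-position.

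Assume, for contradiction, that $S$ is a counterexample with $|S|$ minimal. First I would verify that $G$ satisfies the minimality hypothesis of Proposition~\ref{prop:winning_move}: any proper connected subgraph $G'$ of $G$ containing $u$ is again a subdivided star or a path on fewer than $|S|$ vertices, and since $G'$ and $\app{G'}{u}{4}$ have the same reduction modulo $4$, the induction hypothesis applied to both gives $\grundy_{I_3}(G')=\grundy_{I_3}(\app{G'}{u}{4})$. Proposition~\ref{prop:winning_move} then forces every winning move on $G+\app{G}{u}{4}$ to lie in the component $G$, to remove $u$, and to leave at least two vertices; such a move removes $c\in\{1,2,3\}$ vertices from the outer end of branch $j$. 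In the generic case $c\leq \ell_j-4$ the removal stays inside branch $j$, reaching $H=\sstar{\dots,\ell_j-4-c,\dots}$, and the second player mirrors by deleting the same $c$ vertices from branch $j$ of $\app{G}{u}{4}$, whose branch $j$ has length $\ell_j=(\ell_j-4-c)+4+c$; this leaves a position of the form $H+\app{H}{u'}{4}$, which is a $\outcomeP$-position by the induction hypothesis since $|H|<|S|$, contradicting that the first move was winning.

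The heart of the argument --- and the step I expect to be the main obstacle --- is the boundary case $c>\ell_j-4$, which can only occur when $\ell_j\in\{4,5,6\}$ and in which the removed set eats through all of branch $j$ and reaches the centre. Since the centre is the cut vertex of the star, deleting it disconnects the remaining branches, so for the move to be legal at most one branch may retain vertices; as the whole removed set has size at most $3$, this rigidly constrains $G$. I would carry out a short exhaustive analysis over the pairs $(\ell_j,c)$, in each subcase reading off that the residual graph forces $G$ to be either a path or a three-branch star $\sstar{1,1,m}$ (once the inner parts of the absorbed small branches are accounted for). When $G$ is a path the required equality follows from the pure periodicity of Lemma~\ref{lem:In-paths}; when $G$ is $\sstar{1,1,m}$ it follows from Theorem~\ref{thm:S1lk}, which for $N=3$ is purely periodic since the ``bad cases'' of that theorem do not arise (and the fully degenerate stars $\sstar{1^3}$ are covered by Proposition~\ref{prop:Stk_general}); and when no legal move of the prescribed shape exists, Proposition~\ref{prop:winning_move} yields no winning move at all. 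In every subcase $G+\app{G}{u}{4}$ is therefore a $\outcomeP$-position, contradicting the minimality of $S$ and completing the induction.
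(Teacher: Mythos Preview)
Your overall architecture matches the paper's: reduce to showing $G+\app{G}{u}{4}$ is a $\outcomeP$-position, invoke Proposition~\ref{prop:winning_move} to pin down the shape of any winning move, mirror the move in the generic case, and treat the boundary cases separately. The generic case and the enumeration of the boundary shapes ($G$ a path or $G=\sstar{1,1,m}$) are correct.

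The gap is in how you dispose of the boundary cases when $\ell_j=4$, i.e.\ when $u$ is the \emph{centre} of $G$. In that situation $\app{G}{u}{4}$ does not extend an existing branch but \emph{adds} a new branch of length $4$, so the prior periodicity results you invoke no longer apply. Concretely: if $G=P_k$ with $u$ at distance $2$ from a leaf, then $\app{G}{u}{4}=\sstar{2,k-3,4}$, which is not a path (so Lemma~\ref{lem:In-paths} is silent about it) and has no length-$1$ branch (so Theorem~\ref{thm:S1lk} does not apply either). Likewise, if $G=\sstar{1,1,m}$ with $u$ the centre (and the constraint ``leaves at least two vertices'' forces $m\geq 2$), then $\app{G}{u}{4}=\sstar{1,1,m,4}$ is a four-branch star, which is outside the scope of Theorem~\ref{thm:S1lk}; and Proposition~\ref{prop:Stk_general} only handles $\sstar{1^t,k}$, so it does not cover this unless $m=1$, which is precisely the case that does not arise. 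Hence the sentence ``the required equality follows from Lemma~\ref{lem:In-paths} / Theorem~\ref{thm:S1lk}'' is unjustified in these two subcases.

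The paper does not try to prove $\grundy(G)=\grundy(\app{G}{u}{4})$ directly in these subcases. Instead it exhibits, for each admissible winning move in $G$ (which leaves some $P_r$), an explicit reply in $S'=\app{G}{u}{4}$---namely removing three vertices from the new length-$4$ branch---and then checks that the resulting star ($\sstar{1,2,k-3}$ in the first subcase, $\sstar{1^3,m}$ in the second) has the same Grundy value as $P_r$. These checks use Lemma~\ref{lem:S1lkWithSmalllk}, Proposition~\ref{prop:Stk_general}, Lemma~\ref{lem:csg1nstarsareP}, and a couple of direct $\mex$ computations. You will need this extra layer (reply move plus Grundy computation) to close the argument.
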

	
	\begin{proof}
		We just need to prove that we can remove four vertices on any branch.
		Let $S=\sstar{\ell_1,\ldots,\ell_t,m}$ and $S'=\sstar{\ell_1,\ldots,\ell_t,m+4}$. We prove that $S+S'$ is a $\outcomeP$-position. If $S \in \{ \emptyset, P_1, P_2 \}$, then $S' \in \{ P_4, P_5, P_6 \}$ and the result holds by Lemma~\ref{lem:In-paths}. Hence we will suppose that $|S| \geq 3$.
		
		We prove the result by way of contradiction. Let $u$ be the leaf of the path of length $m$ in $S$ (if $m=0$, then $u$ is the central vertex of $S$). Assume $S$ is minimal in the sense that $S \not\equiv S'$ but $S_1 \equiv \app{S_1}{u}{4}$ for every sub-subdivided star $S_1$ of $S$. Note that, by sub-subdivided star, we mean a subdivided star $S_1 = \sstar{\ell_1',\ldots,\ell_t'}$ with $\ell_i' \leq \ell_i$ for all $i$, and with ant least one $i$ such that $\ell_i' < \ell_i$. By Proposition~\ref{prop:winning_move} the only winning move in $S+S'$ is in $S$, removes the vertex $u$, and leaves at least two vertices in $S$.
		
		If $u$ is the central vertex of $S$, then there are two cases:
		\begin{enumerate}
			\item $S$ is a path of length $k \geq 3$. In this case, $u$ can be a leaf, or at distance 1 or 2 from a leaf:
			\begin{enumerate}
				\item If $u$ is a leaf, then $S'$ is a path and $S \equiv S'$ by Lemma~\ref{lem:In-paths}, a contradiction.
				\item If $u$ is at distance 1 from a leaf, then $S'=\sstar{1,k-2,4}$. If $k-2 \not\equiv 3 \bmod 4$,  then by Theorem~\ref{thm:S1lk}, we have $\grundy(\sstar{1,k-2,4}) = \grundy(P_{k})$ and thus $S' \equiv S$.
				Otherwise, we have $k \equiv 1 \bmod 4$ and, as such, by Lemma~\ref{lem:01s1kl} we have $\grundy(S')=1=\grundy(P_k)$. Both cases are contradictions.
				\item If $u$ is at distance 2 from a leaf, then $S'=\sstar{2,k-3,4}$ and the winning move consists in removing the path of length 2 along with $u$ in $S$, leaving $P_{k-3}$. We claim that removing three vertices from the path of length 4 in $S'$ is an equivalent move. Indeed, the move leaves $\sstar{1,2,k-3}$. If $k-3 \not\equiv 3 \bmod 4$, then by Theorem~\ref{thm:S1lk} and Lemma~\ref{lem:S1lkWithSmalllk} we have $\grundy(\sstar{1,2,k-3}) =\grundy( \sstar{1,2,k-3 \bmod 4})= k+1 \bmod 4$ (the last equality comes from the fact that $2+(k-3 \bmod 4) \geq 2$). Finally, $\grundy(\sstar{1,2,k-3})=\grundy(P_{k-3})$. Otherwise, by Theorem~\ref{thm:S1lk}, we have $\grundy(\sstar{1,2,k-3})=\grundy(\sstar{1,2,3})$.
				By computation we have $\grundy(\sstar{1,2,3})=3=\grundy(P_{k-3})$. Both cases are contradictions.
			\end{enumerate}
			\item $S=\sstar{1,1,k}$. In this case, the winning move can only be removing the two leaves and $u$, leaving $P_k$. We claim that removing three vertices from the path of length 4 in $S'$ is an equivalent move. Indeed, this leaves $\sstar{1^3,k}$, which is equivalent to $\sstar{1^3,k \bmod 4}$ by Proposition~\ref{prop:Stk_general}. There are four possibles values for $k \bmod 4$:
			\begin{enumerate}
				\item If $k \bmod 4 = 0$ then we have $\grundy(\sstar{1^3,k})=\grundy(\sstar{1^3})=0$ by Lemma~\ref{lem:csg1nstarsareP}.
				\item If $k \bmod 4 = 1$ then we have $\grundy(\sstar{1^3,k})=\grundy(\sstar{1^4})=1$ by Lemma~\ref{lem:csg1nstarsareP}.
				\item If $k \bmod 4 = 2$ then there are three options for $\sstar{1^3,2}$ which are $\sstar{1^3}$ (which has Grundy value~0 by Lemma~\ref{lem:csg1nstarsareP}), $\sstar{1^4}$ (which has Grundy value~1 by Lemma~\ref{lem:csg1nstarsareP}) and $\sstar{1^2,2}$ (which has Grundy value~1 by Lemma~\ref{lem:S1lkWithSmalllk}). Thus $\grundy(\sstar{1^3,2})=2$.
				\item If $k \bmod 4 = 3$ then there are four options for $\sstar{1^3,3}$ which are $\sstar{1^3}$ (which has Grundy value 0 by Lemma~\ref{lem:csg1nstarsareP}), $\sstar{1^4}$ (which has Grundy value 1 by Lemma~\ref{lem:csg1nstarsareP}), $\sstar{1^3,2}$ (which has Grundy value 2 by the above subcase) and $\sstar{1^2,3}$ (which has Grundy value 4 by computation, and using Lemma \ref{lem:S1lkWithSmalllk}). Thus $\grundy(\sstar{1^3,2})=3$.
			\end{enumerate}
			
			In the four cases, we have $\grundy(\sstar{1^3,k \bmod 4}) = k \bmod 4 = \grundy(P_k)$. Thus the only possible winning move in $S+S'$ actually leads to a an $\outcomeN$-position. This is a contradiction.
		\end{enumerate}
		
		Assume now that $u$ is not the central vertex, which means that $u$ is a leaf. If the winning move of the first player does not take the central vertex, then the second player can take the same number of vertices in $S'$ in the branch of size $m+4$.
		So, necessarily, the winning move takes the central vertex, $u$, and leaves a path. This means that $S=\sstar{1,k,1}$ and that the first player plays to $P_k$.
		We claim that removing $3$ vertices from the path of length $4$ is equivalent to $P_k$. Indeed, the second component becomes $S'_1=\sstar{1,k,2}$ and by Theorem \ref{thm:S1lk} and Lemma~\ref{lem:S1lkWithSmalllk}, it has Grundy value $k\bmod {4}$.

		All the cases lead to a contradiction, which proves that there is no $S$ such that $S+S'$ is an $\outcomeN$-position. Hence, we have $\grundy(S)=\grundy(S')$.
	\end{proof}

	\section{The $CSG(1,2,4)$ game on subdivided stars}\label{sec:124}
	
	First, note that, if $M \not\equiv 0 \bmod (N+1)$, then the game \csg{I_N \cup \{ M \}} is equivalent to the game \csg{I_N} when those games are played on paths. However, this is not the case when played on graphs, and even on subdivided stars.
	
	\begin{obs}\label{obs:plus_M}
		For all $N \geq 2$, there exists $M \not\equiv 0 \bmod (N+1)$, such that there exists a graph $G$, and a distinguished vertex $u$ in $G$, such that $\grundy_L(G) \neq \grundy_L(\app{G}{u}{N+1})$ where 
		$L=I_N \cup \{ M \}$.
	\end{obs}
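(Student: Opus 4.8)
The plan is to find, for each $N\ge 2$, a single pair $(G,u)$ together with a value $M\not\equiv 0\bmod(N+1)$ that witnesses the claimed inequality, exploiting the fact that a large enough $M$ behaves completely differently on $G$ and on $\app{G}{u}{N+1}$. The device is to pick $M$ strictly larger than $|G|$ but equal to $|\app{G}{u}{N+1}|$: such an $M$ can never be used inside $G$ (every reachable position has fewer than $M$ vertices), so the game \csg{L} on $G$ coincides with \csg{I_N}; yet in $\app{G}{u}{N+1}$ it allows the whole graph to be removed at once, reaching the empty position. Concretely I would take $G=\sstar{1^{N+2}}$, the star on $N+3$ vertices, $u$ its centre, and $M=|G|+(N+1)=2N+4$, so that $L=I_N\cup\{2N+4\}$. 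Since $2N+4=2(N+1)+2$, we get $M\equiv 2\bmod(N+1)$, which is nonzero for every $N\ge 2$, as required.

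The one genuine computation is $\grundy_{I_N}(\sstar{1^{N+2}})=0$. The key point is that in a star the only connected subgraph whose deletion leaves a connected graph is either a single leaf, or a set consisting of the centre together with all but at most one leaf; the latter has at least $N+2$ vertices and is thus not a legal \csg{I_N} move. Hence from $\sstar{1^{N+2}}$ and from $\sstar{1^{N+1}}$ the only legal move is to delete one leaf, leading to $\sstar{1^{N+1}}$ and $\sstar{1^{N}}$ respectively. Since $\sstar{1^{N}}$ has $N+1$ vertices, Lemma~\ref{lem:sizeOfGraph} gives $\grundy_{I_N}(\sstar{1^N})=0$, whence $\grundy_{I_N}(\sstar{1^{N+1}})=\mex\{0\}=1$ and $\grundy_{I_N}(\sstar{1^{N+2}})=\mex\{1\}=0$.

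It then remains to assemble the pieces. Because $M=2N+4>N+3=|G|$, the move of size $M$ is unavailable in every position reachable from $G$, so the games \csg{L} and \csg{I_N} on $G$ have identical game trees and $\grundy_L(G)=\grundy_{I_N}(G)=0$. On the other hand $\app{G}{u}{N+1}$ has exactly $|G|+(N+1)=2N+4=M$ vertices and is connected, so removing the entire graph is a legal move of \csg{L} to the empty graph, which has Grundy value $0$. Thus $0$ appears among the options of $\app{G}{u}{N+1}$, forcing $\grundy_L(\app{G}{u}{N+1})\neq 0=\grundy_L(G)$, the desired inequality.

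There is essentially no hard step here: the short star computation is the only place where anything is actually evaluated, and everything else is the size bookkeeping that makes $M$ simultaneously too big to use inside $G$ and exactly a ``remove everything'' move in $\app{G}{u}{N+1}$. The only thing to keep straight is this numerology — one must check both $M>|G|$ (so that $\grundy_L(G)=\grundy_{I_N}(G)$) and $M\not\equiv 0\bmod(N+1)$, and both hold for the choice $G=\sstar{1^{N+2}}$, $M=2N+4$.
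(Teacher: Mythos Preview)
Your proof is correct and uses exactly the same witness as the paper: $G=\sstar{1^{N+2}}$, $u$ the centre, and $M=2N+4$. The paper phrases the argument as a winning strategy in the sum $G+\app{G}{u}{N+1}$ (empty $\app{G}{u}{N+1}$ in one move of size $M$, then the forced play on the remaining star loses for the opponent), whereas you compute the Grundy values directly ($\grundy_L(G)=\grundy_{I_N}(G)=0$ since $M>|G|$, while $\grundy_L(\app{G}{u}{N+1})>0$ because the empty graph is an option); these are the same argument in two equivalent languages.
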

	
	\begin{proof}
		Let $S$ be a star with $N+2$ leaves and $u$ as a central vertex, and $M=2N+4$. $S + \app{S}{u}{N+1}$ is an $\outcomeN$-position.
		The strategy for the first player is to empty $\app{S}{u}{N+1}$ as a first move. The second player is left with a forced move on $S$. After two more forced moves, the first player will be able to empty $S$.
	\end{proof}
	
	However, some particular games may retain this equivalence.
	In this section we study the game \csg{1,2,4} on subdivided stars and prove the following, which is analogous to Theorem~\ref{thm:123}:
	
	\begin{theorem}\label{thm:124}
		Let $L=\{1,2,4\}$. For all $m\geq 0$ and $\ell_1,\ldots,\ell_t \geq0$, $t\geq0$, we have:
		$$\grundy_L(\sstar{\ell_1,\ldots,\ell_t,m+3}) = \grundy_L(\sstar{\ell_1,\ldots,\ell_t,m}).$$
	\end{theorem}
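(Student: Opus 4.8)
The plan is to prove the equivalent statement that $S+S'$ is a $\outcomeP$-position, where $S=\sstar{\ell_1,\ldots,\ell_t,m}$ and $S'=\sstar{\ell_1,\ldots,\ell_t,m+3}=\app{S}{u}{3}$, with $u$ the leaf of the branch of length $m$ (or the centre if $m=0$); by the Sprague--Grundy theorem this is equivalent to $\grundy_L(S)=\grundy_L(S')$. A direct computation gives the period-$3$ sequence $\grundy_L(P_k)=k\bmod 3$ (i.e. $\overline{012}$) on paths, which settles the case where $S$ is a path and provides the base cases. I would then argue by induction on $|S|$, exhibiting a pairing strategy for the second player that answers every first-player move by reaching either $S+S$, a position $T+T$, or a position $T+\app{T}{v}{3}$ with $|T|<|S|$, to which the induction hypothesis applies. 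I would not try to invoke Proposition~\ref{prop:winning_move} or Lemma~\ref{lem:sizeOfGraph} directly, since $\{1,2,4\}$ is not of the form $I_N$ and, crucially, one cannot remove arbitrary numbers of vertices from a path, so those statements do not transfer.

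The routine part is the classification of moves that do not remove a centre. A move played inside a branch other than the extended one, or inside the copy of $S$ contained in $S'$ while keeping $u$, is mirrored in the other component, producing $T+\app{T}{v}{3}$ with $|T|<|S|$. A move removing $c\in\{1,2\}$ vertices from the free end of the appended $P_3$ is answered by removing the remaining $3-c\in\{1,2\}$ vertices, leaving $S+S$. The genuinely new feature of $\{1,2,4\}$, absent from Theorem~\ref{thm:123}, is the size-$4$ move that consumes the whole appended $P_3$ and overshoots by one vertex into $S$, removing $u$ and turning $S'$ into $\sstar{\ell_1,\ldots,\ell_t,m-1}$; this is answered by deleting the single vertex $u$ from $S$, leaving $\sstar{\ell_1,\ldots,\ell_t,m-1}+\sstar{\ell_1,\ldots,\ell_t,m-1}$, a position of the form $T+T$. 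Connectivity of the $4$-move forces $m\geq1$ here, the case $m=0$ being subsumed by the centre analysis.

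The hard part, exactly as in Theorem~\ref{thm:123}, is a first-player move that removes a centre, which forces emptying all but one branch and leaves a path. Since a legal move now has size at most $4$, the acting star can have up to four (necessarily short) nontrivial branches, one more than in the $\{1,2,3\}$ case; this reduces the remaining work to a finite case analysis on the number of nontrivial branches (essentially $b(S)\in\{2,3,4\}$) and on the few possible multisets of emptied short branches, in which the second player's reply must be chosen by hand. The main obstacle is that correctness of these replies is checked against the Grundy values of small and structured subdivided stars for the $\{1,2,4\}$ game, and, because $\{1,2,4\}\neq I_N$, these values are \emph{not} supplied by the earlier lemmas and must be established beforehand by their own inductions, for families such as $\sstar{1^t,k}$, $\sstar{1,1,k}$ and $\sstar{1,2,k}$ (here Lemma~\ref{lem:general} applies, since the move-sizes $\{1,2,4\}$ cover all nonzero residues $\{1,2\}$ modulo $3$). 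The delicate point to monitor throughout is again the size-$4$ move, which can interact with the centre and the appended path simultaneously; one must verify that in no centre case does it create an asymmetry the second player cannot repair.
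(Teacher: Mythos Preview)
Your proposal is correct and follows essentially the same route as the paper: prove $S+S'$ is a $\outcomeP$-position by induction on $|S|$, mirror all moves that avoid the centre, handle moves on the appended $P_3$ (including the size-$4$ overshoot) by reducing to $S+S$ or $T+T$, and treat centre-removing moves in $S$ by a finite case analysis on $b(S)\in\{2,3,4\}$ backed by precomputed Grundy sequences of small subdivided-star families. The only point to flag is that the stock of auxiliary families needed is larger than your examples suggest: besides $\sstar{1,1,k}$ and $\sstar{1,2,k}$ (where Lemma~\ref{lem:general} indeed applies), the case analysis also requires $\sstar{1,1,1,k}$, $\sstar{1,1,2,k}$, $\sstar{2,2,k}$, $\sstar{1,2,2,k}$, $\sstar{1,1,1,1,k}$ and the isolated value $\grundy(\sstar{3,3,3})$, and several of these do \emph{not} satisfy $\grundy(G)=|G|\bmod 3$, so Lemma~\ref{lem:general} is unavailable and the periodic sequences must be obtained by direct $\mex$ recursions.
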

	
	In order to prove this theorem, we first derive results for some specific subclasses of subdivided stars, and we then proceed by induction. For the rest of this section, all Grundy values will be computed for the game \csg{1,2,4}.
	
	First, it is well-known and easy to see that this game is 3-periodic when played on paths.
	
	\begin{lemma}\label{lem:124-paths}
		We have $\grundy(P_n) = |P_n| \bmod 3$, i.e. $\grundy(P_n) = \overline{012}$, where $P_n$ denotes the path on $n$ vertices.
	\end{lemma}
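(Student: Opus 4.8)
The plan is to observe first that on a path the connectivity requirement of the $CSG$ rule reduces the game to an ordinary subtraction game. Indeed, a connected subgraph of $P_n$ is a subpath, and removing an interior subpath would split $P_n$ into two nonempty components; hence the only legal moves are to delete a block of consecutive vertices containing one of the two endpoints. Consequently, from $P_n$ the legal moves lead exactly to $P_{n-k}$ for $k\in\{1,2,4\}$ with $k\le n$, which is precisely the classical subtraction game with set $\{1,2,4\}$. This reduces the computation of $\grundy(P_n)$ to the recurrence $\grundy(P_n)=\mex\{\grundy(P_{n-k})\mid k\in\{1,2,4\},\ k\le n\}$, and I would then argue by induction on $n$ that $\grundy(P_n)=n\bmod 3$.

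For the base cases I compute directly: $\grundy(P_0)=0$, $\grundy(P_1)=\mex\{0\}=1$, and $\grundy(P_2)=\mex\{1,0\}=2$, matching $0,1,2$. For the inductive step with $n\ge 4$, the key observation is that $n-4\equiv n-1\pmod 3$, so by the induction hypothesis the option $P_{n-4}$ carries the same Grundy value as $P_{n-1}$ and is therefore redundant modulo $3$. Thus the set of option values collapses to $\{(n-1)\bmod 3,\ (n-2)\bmod 3\}$; since $n,n-1,n-2$ represent the three distinct residues modulo $3$, these are exactly the two residues different from $n\bmod 3$, and so the mex equals $n\bmod 3$, as desired.

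The only case needing a separate glance is $n=3$, where the move removing $4$ vertices is unavailable: there the options are $P_2$ and $P_1$ with values $2$ and $1$, so $\mex\{1,2\}=0=3\bmod 3$, again in agreement. Beyond this minor bookkeeping I expect no real obstacle; the single point worth stating carefully is the connectivity reduction of the first step, since it is precisely what guarantees that the $CSG(\{1,2,4\})$ game on a path coincides with the classical subtraction game and hence that the standard $\overline{012}$ pattern applies.
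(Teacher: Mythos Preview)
Your argument is correct. The paper does not actually prove this lemma; it simply asserts it as ``well-known and easy to see,'' so your write-up is in fact more detailed than what appears there, and the reduction to the ordinary subtraction game followed by the $\bmod\ 3$ induction is exactly the standard justification the authors are taking for granted.
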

	
	Let us begin by some specific small subdivided stars that will be useful in the sequel.
	
	\begin{lemma}\label{lem:124-small-ones}
		We have $\grundy(G)=|G| \bmod 3$ for all $G\in S$, with
		\begin{eqnarray*}
			S&=&\{\sstar{1,1,1}, \sstar{1,1,2}, \sstar{1,1,3}, \sstar{1,1,1,3},\\
			&& \sstar{1,2,2}, \sstar{1,2,3}, \sstar{1,1,2,2}, \sstar{1,1,2,3}\}.
		\end{eqnarray*}
		In addition, we have $\grundy(\sstar{1,1,1,1})=0$ and $\grundy(\sstar{1,1,1,2})=3$.
	\end{lemma}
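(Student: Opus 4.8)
The plan is to establish these ten equalities (the eight values listed in $S$ together with the two exceptional ones) by a single induction on the number of vertices, computing each $\grundy$ as the $\mex$ of the Grundy values of its options. The graphs are small enough that every option of each of them is either a path, whose value is given by Lemma~\ref{lem:124-paths}, or a subdivided star with strictly fewer vertices that has already been handled. Concretely I would treat the graphs in nondecreasing order of size: $\sstar{1,1,1}$ (size~$4$); $\sstar{1,1,1,1}$ and $\sstar{1,1,2}$ (size~$5$); $\sstar{1,1,3}$, $\sstar{1,2,2}$, $\sstar{1,1,1,2}$ (size~$6$); $\sstar{1,2,3}$, $\sstar{1,1,1,3}$, $\sstar{1,1,2,2}$ (size~$7$); and finally $\sstar{1,1,2,3}$ (size~$8$). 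Since every legal move removes at least one vertex, the options always have strictly fewer vertices, so this ordering is consistent and each $\mex$ refers only to previously determined values.

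For each graph I would enumerate the legal moves grouped by the number $c\in\{1,2,4\}$ of removed vertices. The feature that keeps this enumeration short is that the central vertex of a subdivided star is a cut vertex, which heavily constrains a legal move. If the move keeps the center, then the removed connected set avoids the center and lies inside a single branch; since here every branch has at most three vertices, this forces any size-$1$ or size-$2$ removal to peel vertices off the end of one branch, and it rules out any size-$4$ move in the four-branch cases $\sstar{1,1,2,2}$ and $\sstar{1,1,2,3}$ (no branch is long enough). If the move removes the center, then the surviving graph must lie inside a single branch and is therefore a path (a suffix of that branch), so a size-$4$ move of this kind exists only when the vertex count works out, and when it does it leaves a short path.

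Given this, each $\mex$ is read off a short explicit list. For instance, $\sstar{1,1,1}$ has only the options $P_3$ (removing a leaf) and $\emptyset$ (removing the whole graph), both of value $0$, giving value $1$; and $\sstar{1,1,1,2}$ reaches value $2$ (removing a leaf, to $\sstar{1,1,2}$, or removing four vertices to leave $P_2$), value $0$ (removing the far endpoint of the length-$2$ path, to $\sstar{1,1,1,1}$), and value $1$ (removing the length-$2$ path, to $\sstar{1,1,1}$), so its value is $\mex\{0,1,2\}=3$. The two exceptional answers arise precisely because the cut-vertex constraint truncates the set of reachable values: $\sstar{1,1,1,1}$ reaches only $\{1\}$, forcing value $0$, and $\sstar{1,1,1,2}$ reaches exactly $\{0,1,2\}$, forcing value $3$, in both cases departing from $|G|\bmod 3$.

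I expect the only genuine difficulty to be bookkeeping: making sure the move list is exhaustive, and in particular verifying for every size-$4$ move both that the removed four vertices induce a connected subgraph and that the remaining graph is connected. The cut-vertex observation of the second paragraph is exactly what makes this verification finite and mechanical, after which all ten values follow by inspection.
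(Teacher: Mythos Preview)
Your proposal is correct and follows essentially the same approach as the paper: a direct case analysis computing each Grundy value as the $\mex$ over its options, processing the graphs in order of increasing size so that every option is already known (either a path via Lemma~\ref{lem:124-paths} or a previously treated subdivided star). Your explicit cut-vertex observation and the size ordering make the bookkeeping cleaner than the paper's presentation, but the underlying argument is identical.
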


	\begin{proof}
		The proof is straightforward and a consequence of simple case analysis. It is easy to check that $\grundy(\sstar{1,1,1})$ is indeed 1, since there are only 2 possible moves, that lead to $\emptyset$ and to the path $P_3$, that both have Grundy number 0. As for $\sstar{1,1,2}$, there are 4 possible moves, leading to paths $P_1$ and $P_3$, and to $\sstar{1,1,1}$, and the result follows from the application of the mex rule. Similarly, 4 moves are available from $\sstar{1,1,3}$, leading to paths and to $\sstar{1,1,1}$ and $\sstar{1,1,2}$. In the case $\sstar{1,1,1,1}$, we have only 2 possible moves, leading to $\sstar{1,1,1}$ and $P_1$, hence $\grundy(\sstar{1,1,1,1})=0$. From $\sstar{1,1,1,2}$, 4 moves are available, leading to a path $P_2$ and to $\sstar{1,1,1,1}$, $\sstar{1,1,1}$, and $\sstar{1,1,2}$. We use this result to compute $\grundy(\sstar{1,1,1,3})$, since one of the 4 possible moves leads to $\sstar{1,1,1,2}$, the other leading also to a path and to already known subdivided stars, namely $\sstar{1,1,1,1}$ and $\sstar{1,1,3}$. Similarly, there are 4 possible moves from $\sstar{1,2,2}$, all leading to already known graphs. There are 6 moves from $\sstar{1,2,3}$, again all leading to paths or previously studied subdivided stars. The cases $\sstar{1,1,2,2}$ and $\sstar{1,1,2,3}$ are similar: there are, respectively, 3 and 5 moves to study, that thankfully all lead to graphs whose Grundy value have been already computed above.
	\end{proof}
	
	\begin{lemma}\label{lem:124-S11k}
		For all $k\geq0$, we have $\grundy(\sstar{1,1,k})=|\sstar{1,1,k}| \bmod 3$, i.e. $\grundy(\sstar{1,1,k}) = \overline{012}$.
	\end{lemma}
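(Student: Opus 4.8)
The plan is to prove this by induction on $k$, feeding the inductive step through Lemma~\ref{lem:general} with period $T=3$ and $\alpha_i=i$. Since $|\sstar{1,1,k}|=k+3$, the statement $\grundy(\sstar{1,1,k})=|\sstar{1,1,k}|\bmod 3$ is the same as $\grundy(\sstar{1,1,k})=k\bmod 3$, i.e. the sequence $\overline{012}$ starting at $k=0$. For the base cases I would observe that $\sstar{1,1,0}$ is the path $P_3$, so $\grundy(\sstar{1,1,0})=0$ by Lemma~\ref{lem:124-paths}, while the values for $k\in\{1,2,3\}$ are precisely the graphs $\sstar{1,1,1}$, $\sstar{1,1,2}$, $\sstar{1,1,3}$ already evaluated in Lemma~\ref{lem:124-small-ones}. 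Anchoring the induction up to $k=3$ is exactly what is needed, because the longest single move (removing four vertices) only appears once $k\geq 4$.

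For the inductive step with $k\geq 4$, the heart of the argument is to list the legal moves and check they all stay inside a family whose Grundy value is already known to be the number of vertices modulo $3$. Writing the star with center $c$, leaves $a,b$, and path $p_1,\dots,p_k$, the connectivity-preserving moves are: remove a single leaf, leaving the path $P_{k+2}$; remove $p_k$, leaving $\sstar{1,1,k-1}$; remove $\{p_{k-1},p_k\}$, leaving $\sstar{1,1,k-2}$; remove the last four path vertices, leaving $\sstar{1,1,k-4}$; and remove the connected set $\{a,b,c,p_1\}$, leaving $P_{k-1}$. Any move that deletes the center without emptying both leaves, or that splits the interior of the long path, disconnects the graph and is illegal, so this list is exhaustive. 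Each option is either a path (Lemma~\ref{lem:124-paths}) or a smaller star $\sstar{1,1,j}$ with $j<k$ (induction hypothesis), hence has Grundy value equal to its size modulo $3$.

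I would then invoke Lemma~\ref{lem:general} with $\mathcal{F}$ the paths together with the stars $\sstar{1,1,j}$ for $j<k$, and $\mathcal{F}'=\{\sstar{1,1,k}\}$: the move enumeration above verifies the first hypothesis, and removing one vertex and removing two vertices are always legal (for $k\geq 2$), so the removable sizes realize residues $1$ and $2$ modulo $3$, giving the second hypothesis $\{1,2\}=\{1,\dots,T-1\}$. The conclusion is exactly $\grundy(\sstar{1,1,k})=|\sstar{1,1,k}|\bmod 3$. The only delicate part is the move enumeration and the accompanying connectivity check; everything else is bookkeeping. If one prefers to avoid Lemma~\ref{lem:general}, the same conclusion follows from a direct $\mex$ computation: the option Grundy values are $(k-1)\bmod 3$ and $(k-2)\bmod 3$ (the $P_{k+2}$, $\sstar{1,1,k-4}$, and $P_{k-1}$ options all collapse to $(k-1)\bmod 3$), and since $\{k-2,k-1,k\}$ covers every residue modulo $3$, the missing value is $k\bmod 3$, so $\mex=k\bmod 3$.
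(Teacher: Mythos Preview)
Your proof is correct and follows essentially the same route as the paper: induction on $k$, base cases $k\le 3$ via Lemmas~\ref{lem:124-paths} and~\ref{lem:124-small-ones}, and for $k\ge 4$ the exact same five-option enumeration $P_{k+2}$, $P_{k-1}$, $\sstar{1,1,k-1}$, $\sstar{1,1,k-2}$, $\sstar{1,1,k-4}$ fed into Lemma~\ref{lem:general} with $T=3$, $\alpha_i=i$, and $\mathcal{F}$ the paths together with the smaller stars. Your only additions are the explicit connectivity justification for why these five are the complete list (which the paper omits) and the optional direct $\mex$ computation at the end; in that final remark you might note that $\sstar{1,1,k-1}$ also has Grundy value $(k+2)\bmod 3=(k-1)\bmod 3$, so four of the five options collapse, not just three, though your conclusion is unaffected.
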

	
	\begin{proof}
		We proceed by induction on $k$. The cases $k\leq3$ are all consequences of Lemma~\ref{lem:124-paths} and~\ref{lem:124-small-ones}. When $k\geq4$, we can apply Lemma~\ref{lem:general} since there are always exactly 5 moves available, leading to $P_{k+2}$, $P_{k-1}$, $\sstar{1,1,k-1}$, $\sstar{1,1,k-2}$, and $\sstar{1,1,k-4}$. Indeed, the lemma can be applied with $\mathcal{F}$ set to the set of all paths together with subdivided stars $\sstar{1,1,k'}$ such as $k'<k$, so as to get the desired result by induction (with $T$ in the statement of Lemma~\ref{lem:general} set to $3$ and $\alpha_i=i$ for $i\in\{0,1,2\}$).
	\end{proof}
	
	\begin{lemma}\label{lem:124-S111k}
		For all $k\geq0$, we have $\grundy(\sstar{1,1,1,k})=
		\left\{\begin{array}{c}
		1 \mbox{ if } k \equiv 0 \bmod 3\\
		0 \mbox{ if } k \equiv 1 \bmod 3\\
		3 \mbox{ if } k \equiv 2 \bmod 3
		\end{array}\right.$,
		i.e. $\grundy(\sstar{1,1,1,k}) = \overline{103}$.
	\end{lemma}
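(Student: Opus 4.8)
The plan is to argue by induction on $k$, mirroring the proof of Lemma~\ref{lem:124-S11k}. The base cases $k\in\{0,1,2,3\}$ are already available: since $\sstar{1,1,1,0}=\sstar{1,1,1}$, Lemma~\ref{lem:124-small-ones} directly gives $\grundy(\sstar{1,1,1})=1$, $\grundy(\sstar{1,1,1,1})=0$, $\grundy(\sstar{1,1,1,2})=3$, and $\grundy(\sstar{1,1,1,3})=|\sstar{1,1,1,3}|\bmod 3=7\bmod 3=1$, which matches $\overline{103}$ in positions $0,1,2,3$.

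For the inductive step, fix $k\geq 4$ and let $c$ be the central vertex, with three leaves attached and a path $v_1\cdots v_k$ hanging off $c$ (so $v_1$ is adjacent to $c$). First I would enumerate all legal moves, using the connectivity constraint to discard candidates. Removing any vertex other than a leaf or the endpoint $v_k$ disconnects the graph, and every connected subgraph that contains a leaf must also contain $c$; removing $c$ forces us to leave exactly one branch nonempty, and for a removed set of size $1$, $2$ or $4$ the only such possibility is deleting $c$ together with its three leaves (a leftover leaf would require removing $3+k>4$ vertices). This yields exactly five options: removing a leaf (giving $\sstar{1,1,k}$), removing $v_k$ (giving $\sstar{1,1,1,k-1}$), removing $\{v_{k-1},v_k\}$ (giving $\sstar{1,1,1,k-2}$), removing $\{v_{k-3},\dots,v_k\}$ (giving $\sstar{1,1,1,k-4}$), and removing $c$ with its three leaves (giving $P_k$). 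Verifying that this list is exhaustive is the main, if routine, obstacle, since one must check that no size-$4$ connected subgraph other than these two survives the connectivity requirement.

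Next I would read off the Grundy values of the options. By Lemma~\ref{lem:124-S11k} we have $\grundy(\sstar{1,1,k})=|\sstar{1,1,k}|\bmod 3=k\bmod 3$, and by Lemma~\ref{lem:124-paths} we have $\grundy(P_k)=k\bmod 3$, so these two options together contribute only the value $k\bmod 3$. By the induction hypothesis the remaining three options have values $\overline{103}$ evaluated at $k-1$, $k-2$ and $k-4$; since $k-4\equiv k-1\pmod 3$, the option $\sstar{1,1,1,k-4}$ merely duplicates $\sstar{1,1,1,k-1}$. Hence $\grundy(\sstar{1,1,1,k})$ is the $\mex$ of $\{\,k\bmod 3,\ g(k-1),\ g(k-2)\,\}$, where $g$ denotes the claimed sequence $\overline{103}$.

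Finally I would close with a three-line case check on $k\bmod 3$. If $k\equiv 0$ the option values are $\{0,3,0\}$, with $\mex=1$; if $k\equiv 1$ they are $\{1,1,3\}$, with $\mex=0$; and if $k\equiv 2$ they are $\{2,0,1\}$, with $\mex=3$. In each case the result agrees with $\overline{103}$, completing the induction. Note that, unlike in Lemma~\ref{lem:124-S11k}, one cannot invoke Lemma~\ref{lem:general} here: the value $3$ occurs in the sequence, so $\grundy$ is not a bijection onto $\{0,1,2\}$ and the hypotheses of that lemma fail, which is precisely why the direct $\mex$ computation is needed.
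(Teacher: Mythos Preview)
Your proof is correct and follows essentially the same approach as the paper: induction on $k$ with base cases from Lemma~\ref{lem:124-small-ones}, the same enumeration of five options for $k\geq4$, and a direct $\mex$ computation in each residue class modulo~$3$. Your write-up is somewhat more detailed in justifying the option enumeration and adds the useful remark that Lemma~\ref{lem:general} is inapplicable here, but the argument is otherwise identical to the paper's.
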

	
	\begin{proof}
		The small cases $k\leq3$ are all consequences of Lemma~\ref{lem:124-small-ones}. When $k\geq4$, there are always exactly 5 possible moves, leading to $\sstar{1,1,k}$, $P_{k}$, $\sstar{1,1,1,k-1}$, $\sstar{1,1,1,k-2}$, and $\sstar{1,1,1,k-4}$. By induction, and thanks to Lemma~\ref{lem:124-paths} and~\ref{lem:124-S11k}, we then have:
		\begin{itemize}
			\item If $k \equiv 0 \bmod 3$, then $\grundy(\sstar{1,1,1,k})$ is computed as $\mex(0,0,3,0,3)=1$, and we are done.
			\item If $k \equiv 1 \bmod 3$, then $\grundy(\sstar{1,1,1,k})$ is computed as $\mex(1,1,1,3,1)=0$, and we are done.
			\item If $k \equiv 2 \bmod 3$, then $\grundy(\sstar{1,1,1,k})$ is computed as $\mex(2,2,0,1,0)=3$, and we are done.
		\end{itemize}
	\end{proof}
	
	\begin{lemma}\label{lem:124-S12k}
		For all $k\geq0$, we have $\grundy(\sstar{1,2,k})=|\sstar{1,2,k}| \bmod 3$, i.e. $\grundy(\sstar{1,2,k}) = \overline{120}$.
	\end{lemma}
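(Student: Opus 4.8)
The plan is to prove this by induction on $k$, in exact parallel with the proof of Lemma~\ref{lem:124-S11k}. Since $|\sstar{1,2,k}| = k+4$, the claimed value $|\sstar{1,2,k}|\bmod 3$ cycles as $\overline{120}$ starting from $k=0$, and the whole argument will reduce to an application of Lemma~\ref{lem:general} with $T=3$ and $\alpha_i = i$. First I would record the base cases $k \leq 3$: the graph $\sstar{1,2,0}=P_4$ has Grundy value $1$ by Lemma~\ref{lem:124-paths}, while $\sstar{1,2,1}$ (which is $\sstar{1,1,2}$), $\sstar{1,2,2}$ and $\sstar{1,2,3}$ all appear in Lemma~\ref{lem:124-small-ones} with Grundy value equal to their size modulo $3$.

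For the inductive step $k \geq 4$, the heart of the matter is a complete enumeration of the legal moves. Writing $c$ for the centre, $a$ for the pendant leaf, $b_1 b_2$ for the branch of length $2$ and $p_1 \cdots p_k$ for the long branch, I would exploit the fact that $\sstar{1,2,k}$ is a tree: removing a connected subgraph leaves a connected graph exactly when the removed set is a peripheral subtree, i.e.\ one joined to the rest by a single edge. This makes the enumeration exhaustive and yields precisely seven moves. For size $1$: removing $a$ (to $P_{k+3}$), $b_2$ (to $\sstar{1,1,k}$), or $p_k$ (to $\sstar{1,2,k-1}$). For size $2$: removing $\{b_1,b_2\}$ (to $P_{k+2}$) or $\{p_{k-1},p_k\}$ (to $\sstar{1,2,k-2}$). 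For size $4$: removing the core $\{a,c,b_1,b_2\}$ (to $P_k$) or the last four vertices $\{p_{k-3},\dots,p_k\}$ (to $\sstar{1,2,k-4}$).

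Every target is then a path, a star $\sstar{1,1,\cdot}$, or a star $\sstar{1,2,k'}$ with $k'<k$, so all of them lie in the family $\mathcal{F}$ consisting of all paths together with all $\sstar{1,1,\cdot}$ and all $\sstar{1,2,k'}$ with $k'<k$; by Lemmas~\ref{lem:124-paths} and~\ref{lem:124-S11k} and the induction hypothesis, these satisfy $\grundy(G)=|G|\bmod 3$. Since the available move sizes are $1,2,4$ and $\{1,2,4\}\bmod 3 = \{1,2\}$, the two hypotheses of Lemma~\ref{lem:general} are met, and the lemma delivers $\grundy(\sstar{1,2,k})=|\sstar{1,2,k}|\bmod 3$, closing the induction.

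The only delicate point — and the step I would double-check most carefully — is the completeness of the move enumeration, in particular that the two size-$4$ removals (the core $\{a,c,b_1,b_2\}$, which leaves the bare path $P_k$, and the end of the long branch) are genuinely the only connected $4$-vertex removals that preserve connectivity. The peripheral-subtree characterization for trees is what turns this into a routine check. I would also flag the mild degeneracy at $k=4$, where removing the whole long branch and removing the core both land on $P_4$; this is harmless, since both targets already lie in $\mathcal{F}$, and does not affect the residue condition or the applicability of Lemma~\ref{lem:general}.
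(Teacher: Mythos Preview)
Your proof is correct and follows essentially the same approach as the paper: induction on $k$ with base cases $k\le 3$ handled by Lemmas~\ref{lem:124-paths} and~\ref{lem:124-small-ones}, and the inductive step $k\ge 4$ reduced to Lemma~\ref{lem:general} via the same list of seven options $P_k$, $P_{k+2}$, $P_{k+3}$, $\sstar{1,1,k}$, $\sstar{1,2,k-1}$, $\sstar{1,2,k-2}$, $\sstar{1,2,k-4}$. Your explicit peripheral-subtree justification for the completeness of the move list is a welcome bit of extra rigor but does not change the strategy.
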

	
	\begin{proof}
		We proceed by induction on $k$. The cases $k\leq3$ are all consequences of Lemma~\ref{lem:124-paths} and~\ref{lem:124-small-ones}. When $k\geq4$, we can apply Lemma~\ref{lem:general} since there are always exactly 7 moves available, leading to $P_{k}$, $P_{k+2}$, $P_{k+3}$, $\sstar{1,1,k}$, $\sstar{1,2,k-1}$, $\sstar{1,2,k-2}$, and $\sstar{1,2,k-4}$. Indeed, the lemma can be applied with $\mathcal{F}$ set to the set of all paths together together with subdivided stars of Lemma~\ref{lem:124-S11k} and subdivided stars $\sstar{1,2,k'}$ such as $k'<k$, so as to get the desired result by induction (with $T$ in the statement of Lemma~\ref{lem:general} set to $3$ and $\alpha_i=i$ for $i\in\{0,1,2\}$).
	\end{proof}
	
	\begin{lemma}\label{lem:124-S112k}
		For all $k\geq0$, we have $\grundy(\sstar{1,1,2,k})=
		\left\{\begin{array}{c}
		2 \mbox{ if } k \equiv 0 \bmod 3\\
		3 \mbox{ if } k \equiv 1 \bmod 3\\
		1 \mbox{ if } k \equiv 2 \bmod 3
		\end{array}\right.$,
		i.e. $\grundy(\sstar{1,1,2,k}) = \overline{231}$.
	\end{lemma}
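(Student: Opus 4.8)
The plan is to proceed by induction on $k$, following exactly the template of the proof of Lemma~\ref{lem:124-S111k}. The base cases $k\leq 3$ are handled directly from Lemma~\ref{lem:124-small-ones}: since $\sstar{1,1,2,0}=\sstar{1,1,2}$, $\sstar{1,1,2,1}=\sstar{1,1,1,2}$, and $\sstar{1,1,2,2}$, $\sstar{1,1,2,3}$ all have their Grundy values recorded there, one checks these agree with $\overline{231}$ (namely $2,3,1,2$ for $k=0,1,2,3$).

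For the inductive step $k\geq 4$, the first task is to enumerate the available moves. Since a subdivided star is a tree, any legal move removes a connected subtree meeting the rest of the graph in a single vertex; in particular, any connected subgraph avoiding the central vertex lies entirely inside one branch. I would list the six resulting options: the size-$1$ moves give $\sstar{1,2,k}$ (removing a leaf), $\sstar{1,1,1,k}$ (shortening the branch of length $2$) and $\sstar{1,1,2,k-1}$; the size-$2$ moves give $\sstar{1,1,k}$ (removing the whole branch of length $2$) and $\sstar{1,1,2,k-2}$; and the unique size-$4$ move gives $\sstar{1,1,2,k-4}$.

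The crucial structural point — and the step I expect to be the main obstacle — is verifying that for $k\geq 4$ there is no legal size-$4$ move removing the central vertex, so that the move set above is complete and uniform in $k$. Such a move would have to delete the centre together with all but one branch (the kept branch possibly shortened); the cheapest choice keeps the branch of length $k$, and already deleting the centre plus the two leaves plus the branch of length $2$ uses $1+1+1+2=5$ vertices. Hence every centre-removing move has size at least $5>4$ once $k\geq 4$, which is precisely what lets the induction run cleanly from the four base cases.

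Finally, I would compute $\grundy(\sstar{1,1,2,k})$ as the $\mex$ of the six option values in each residue class of $k$ modulo $3$, reading off $\grundy(\sstar{1,2,k})$, $\grundy(\sstar{1,1,1,k})$ and $\grundy(\sstar{1,1,k})$ from Lemmas~\ref{lem:124-S12k}, \ref{lem:124-S111k} and~\ref{lem:124-S11k}, and using the induction hypothesis for the three options of the form $\sstar{1,1,2,\cdot}$. For instance, when $k\equiv 0$ the six values are $1,1,1,0,3,1$, whose $\mex$ is $2$; the cases $k\equiv 1$ and $k\equiv 2$ yield option sets $\{0,1,2\}$ and $\{0,2,3\}$, giving $\mex$ equal to $3$ and $1$ respectively, matching $\overline{231}$. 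Note that, unlike in Lemmas~\ref{lem:124-S11k} and~\ref{lem:124-S12k}, the meta-lemma (Lemma~\ref{lem:general}) cannot be invoked here, since the periodic values $\{1,2,3\}$ are not a permutation of $\{0,1,2\}$; hence the explicit $\mex$ computation is unavoidable, exactly as in Lemma~\ref{lem:124-S111k}.
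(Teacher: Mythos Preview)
Your proposal is correct and follows essentially the same approach as the paper: induction on $k$ with base cases $k\le 3$ from Lemma~\ref{lem:124-small-ones}, the same six options for $k\ge 4$, and an explicit $\mex$ computation in each residue class using Lemmas~\ref{lem:124-S11k}, \ref{lem:124-S111k}, and~\ref{lem:124-S12k}. You even supply two points the paper leaves implicit or defers, namely the counting argument ruling out a size-$4$ centre-removing move and the remark that Lemma~\ref{lem:general} is inapplicable here.
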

	
	\begin{proof}
		The small cases $k\leq3$ are all consequences of Lemma~\ref{lem:124-small-ones}. When $k\geq4$, there are always exactly 6 possible moves, leading to $\sstar{1,1,2,k-1}$, $\sstar{1,1,2,k-2}$, $\sstar{1,1,2,k-4}$, $\sstar{1,1,1,k}$, $\sstar{1,1,k}$, and $\sstar{1,2,k}$. By induction, and thanks to Lemma~\ref{lem:124-S11k},~\ref{lem:124-S111k} and~\ref{lem:124-S12k}, we then have:
		\begin{itemize}
			\item If $k \equiv 0 \bmod 3$, then $\grundy(\sstar{1,1,2,k})$ is computed as $\mex(1,3,1,1,0,1)=2$, and we are done.
			\item If $k \equiv 1 \bmod 3$, then $\grundy(\sstar{1,1,2,k})$ is computed as $\mex(2,1,2,0,1,2)=3$, and we are done.
			\item If $k \equiv 2 \bmod 3$, then $\grundy(\sstar{1,1,2,k})$ is computed as $\mex(3,2,3,3,2,0)=1$, and we are done.
		\end{itemize}
	\end{proof}
	
	Note that none of the subdivided stars $\sstar{1,1,2,k}$ is a $\outcomeP$-position.
	
	We also need the following detailed analysis of the subdivided star $\sstar{3,3,3}$.
	
	\begin{lemma}\label{lem:124-S333}
		We have $\grundy(\sstar{3,3,3})=\grundy(\sstar{2,2,2})=1$, $\grundy(\sstar{2,2,3})=2$, with the Grundy numbers of subdivided stars that are accessible from $\sstar{3,3,3}$ as in Figure~\ref{fig:124-S333}.
	\end{lemma}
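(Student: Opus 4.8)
The plan is to compute $\grundy$ directly by a finite recursion over the positions reachable from $\sstar{3,3,3}$, since this is a single fixed starting graph with only finitely many descendants. The Grundy value of each position is the $\mex$ of the values of its options, so I would order all reachable subdivided stars and paths by their number of vertices and evaluate them from the smallest upward; since every legal move strictly decreases the number of vertices, this order is well-founded and introduces no circular dependency. Figure~\ref{fig:124-S333} is precisely the record of this computation, so proving the lemma amounts to justifying each $\mex$ displayed there.

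The first step is to pin down the legal moves from a three-ray star $\sstar{a,b,c}$. A connected subgraph that avoids the centre lies entirely inside a single ray, hence can only shorten one ray from its free end; as every ray reachable from $\sstar{3,3,3}$ has length at most $3$, such a move removes $1$ or $2$ vertices, and removing the whole of a ray of length at most $2$ turns the star into a path. A connected subgraph containing the centre must, to keep the remainder connected, fully delete all but at most one of the rays; for each of the five stars $\sstar{3,3,3}$, $\sstar{2,3,3}$, $\sstar{1,3,3}$, $\sstar{2,2,3}$, $\sstar{2,2,2}$ this forces removing at least $5$ vertices, which is never a legal size for $L=\{1,2,4\}$. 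Consequently, for each of these five stars the options are obtained solely by shortening one ray by $1$ or by $2$, and in particular no move of size $4$ is available.

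With the move structure settled, I would invoke the already-established base values: the pattern $\overline{012}$ for paths (Lemma~\ref{lem:124-paths}) and the values of the small stars $\sstar{1,1,1}$, $\sstar{1,1,2}$, $\sstar{1,1,3}$, $\sstar{1,2,2}$, $\sstar{1,2,3}$ (Lemma~\ref{lem:124-small-ones}). I then evaluate, in increasing order of size: $\sstar{2,2,2}$, with options $\sstar{1,2,2}$ and $P_5$, giving $\mex\{0,2\}=1$; $\sstar{1,3,3}$, with options $P_7$, $\sstar{1,2,3}$ and $\sstar{1,1,3}$, giving $\mex\{1,1,0\}=2$; $\sstar{2,2,3}$, with options $\sstar{2,2,2}$, $\sstar{1,2,3}$, $\sstar{1,2,2}$ and $P_6$, giving $\mex\{1,1,0,0\}=2$; $\sstar{2,3,3}$, with options $\sstar{1,3,3}$, $\sstar{2,2,3}$, $P_7$ and $\sstar{1,2,3}$, giving $\mex\{2,2,1,1\}=0$; and finally $\sstar{3,3,3}$, whose only options are $\sstar{2,3,3}$ and $\sstar{1,3,3}$, giving $\mex\{0,2\}=1$. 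This yields the three asserted values and fills in every node of Figure~\ref{fig:124-S333}.

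The only genuine obstacle is bookkeeping: ensuring that the move enumeration is both complete and sound, i.e.\ that no ray-shortening or centre-removing move is overlooked and that no disconnecting move is mistakenly counted. The connectivity constraint is exactly what rules out the size-$4$ moves here, and it is the one place where an error would propagate through every subsequent $\mex$; once it is verified for the five stars above, the remainder is a mechanical bottom-up evaluation whose outcomes can be read off Figure~\ref{fig:124-S333}.
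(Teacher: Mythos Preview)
Your proposal is correct and follows exactly the same approach as the paper: a direct bottom-up $\mex$ computation over the finitely many positions reachable from $\sstar{3,3,3}$, using Lemmas~\ref{lem:124-paths} and~\ref{lem:124-small-ones} for the leaves of the recursion. The paper's proof is simply the terse statement that ``Figure~\ref{fig:124-S333} speaks for itself'', and your write-up supplies precisely the move-enumeration and connectivity argument (in particular, that no size-$4$ move exists from any of the five three-ray stars) that justifies each $\mex$ in that figure.
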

	
	\begin{proof}
		The proof is straightforward, and relies on the previous Lemmas~\ref{lem:124-paths} and~\ref{lem:124-small-ones}. Figure~\ref{fig:124-S333} speaks for itself.
	\end{proof}
	
	
	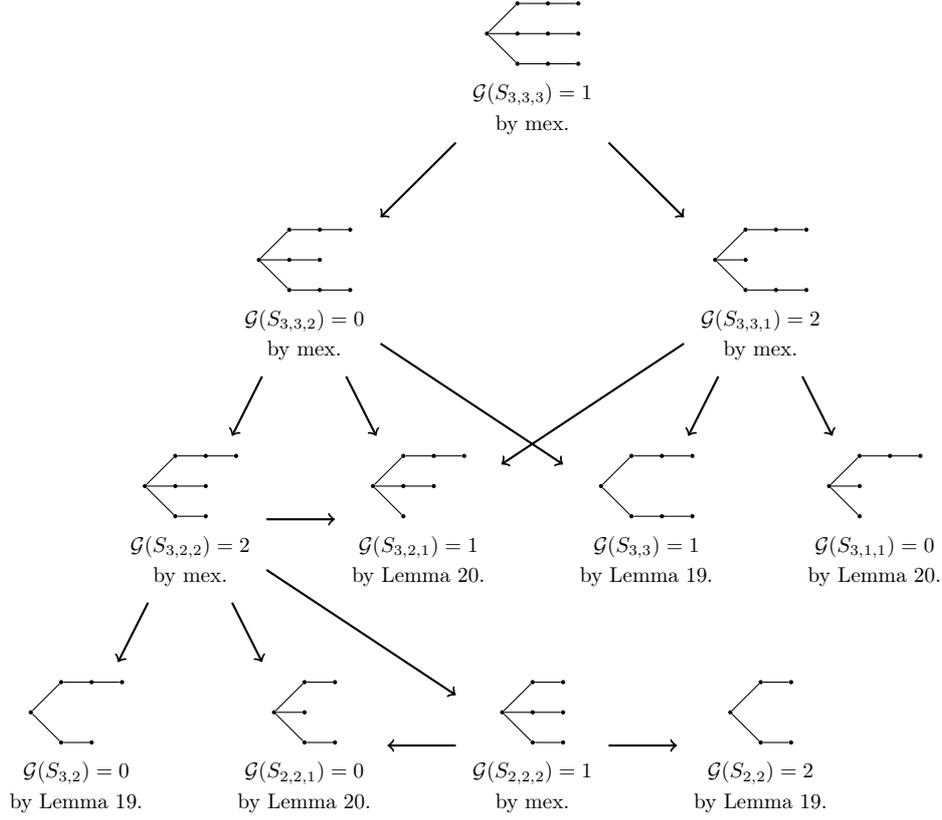
\begin{figure}[!h]
		\centering
		\begin{tikzpicture}
		\node (s333) at (0,0) {
			\begin{tikzpicture}[scale=0.8]
			\node (dessin) at (0,1) {
				\begin{tikzpicture}[scale=0.4]
				\node[noeud,scale=0.4] (c) at (0,0) {};
				\node[noeud,scale=0.4] (1) at (1,1) {};
				\node[noeud,scale=0.4] (2) at (2,1) {};
				\node[noeud,scale=0.4] (3) at (3,1) {};
				\node[noeud,scale=0.4] (4) at (1,0) {};
				\node[noeud,scale=0.4] (5) at (2,0) {};
				\node[noeud,scale=0.4] (6) at (3,0) {};
				\node[noeud,scale=0.4] (7) at (1,-1) {};
				\node[noeud,scale=0.4] (8) at (2,-1) {};
				\node[noeud,scale=0.4] (9) at (3,-1) {};
				\draw (c)--(1)--(3);
				\draw (c)--(7)--(9);
				\draw (c)--(6);
				\end{tikzpicture}
			};
			\node[scale=0.8] (a) at (0,0) {$\grundy(\sstar{3,3,3})=1$};
			\node[scale=0.8] (b) at (0,-0.5) {by $\mex$.};
		\end{tikzpicture}
	};
	\node (s332) at (-3,-3) {
		\begin{tikzpicture}[scale=0.8]
		\node (dessin) at (0,1) {
			\begin{tikzpicture}[scale=0.4]
			\node[noeud,scale=0.4] (c) at (0,0) {};
			\node[noeud,scale=0.4] (1) at (1,1) {};
			\node[noeud,scale=0.4] (2) at (2,1) {};
			\node[noeud,scale=0.4] (3) at (3,1) {};
			\node[noeud,scale=0.4] (4) at (1,0) {};
			\node[noeud,scale=0.4] (5) at (2,0) {};
			\node[noeud,scale=0.4] (7) at (1,-1) {};
			\node[noeud,scale=0.4] (8) at (2,-1) {};
			\node[noeud,scale=0.4] (9) at (3,-1) {};
			\draw (c)--(1)--(3);
			\draw (c)--(7)--(9);
			\draw (c)--(5);
			\end{tikzpicture}
		};
		\node[scale=0.8] (a) at (0,0) {$\grundy(\sstar{3,3,2})=0$};
		\node[scale=0.8] (b) at (0,-0.5) {by $\mex$.};
	\end{tikzpicture}
};
\node (s331) at (3,-3) {
	\begin{tikzpicture}[scale=0.8]
	\node (dessin) at (0,1) {
		\begin{tikzpicture}[scale=0.4]
		\node[noeud,scale=0.4] (c) at (0,0) {};
		\node[noeud,scale=0.4] (1) at (1,1) {};
		\node[noeud,scale=0.4] (2) at (2,1) {};
		\node[noeud,scale=0.4] (3) at (3,1) {};
		\node[noeud,scale=0.4] (4) at (1,0) {};
		\node[noeud,scale=0.4] (7) at (1,-1) {};
		\node[noeud,scale=0.4] (8) at (2,-1) {};
		\node[noeud,scale=0.4] (9) at (3,-1) {};
		\draw (c)--(1)--(3);
		\draw (c)--(7)--(9);
		\draw (c)--(4);
		\end{tikzpicture}
	};
	\node[scale=0.8] (a) at (0,0) {$\grundy(\sstar{3,3,1})=2$};
	\node[scale=0.8] (b) at (0,-0.5) {by $\mex$.};
\end{tikzpicture}
};
\node (s322) at (-4.5,-6) {
\begin{tikzpicture}[scale=0.8]
\node (dessin) at (0,1) {
	\begin{tikzpicture}[scale=0.4]
	\node[noeud,scale=0.4] (c) at (0,0) {};
	\node[noeud,scale=0.4] (1) at (1,1) {};
	\node[noeud,scale=0.4] (2) at (2,1) {};
	\node[noeud,scale=0.4] (3) at (3,1) {};
	\node[noeud,scale=0.4] (4) at (1,0) {};
	\node[noeud,scale=0.4] (5) at (2,0) {};
	\node[noeud,scale=0.4] (7) at (1,-1) {};
	\node[noeud,scale=0.4] (8) at (2,-1) {};
	\draw (c)--(1)--(3);
	\draw (c)--(7)--(8);
	\draw (c)--(5);
	\end{tikzpicture}
};
\node[scale=0.8] (a) at (0,0) {$\grundy(\sstar{3,2,2})=2$};
\node[scale=0.8] (b) at (0,-0.5) {by $\mex$.};
\end{tikzpicture}
};
\node (s321) at (-1.5,-6) {
\begin{tikzpicture}[scale=0.8]
\node (dessin) at (0,1) {
\begin{tikzpicture}[scale=0.4]
\node[noeud,scale=0.4] (c) at (0,0) {};
\node[noeud,scale=0.4] (1) at (1,1) {};
\node[noeud,scale=0.4] (2) at (2,1) {};
\node[noeud,scale=0.4] (3) at (3,1) {};
\node[noeud,scale=0.4] (4) at (1,0) {};
\node[noeud,scale=0.4] (5) at (2,0) {};
\node[noeud,scale=0.4] (7) at (1,-1) {};
\draw (c)--(1)--(3);
\draw (c)--(7);
\draw (c)--(5);
\end{tikzpicture}
};
\node[scale=0.8] (a) at (0,0) {$\grundy(\sstar{3,2,1})=1$};
\node[scale=0.8] (b) at (0,-0.5) {by Lemma~\ref{lem:124-small-ones}.};
\end{tikzpicture}
};
\node (s33) at (1.5,-6) {
\begin{tikzpicture}[scale=0.8]
\node (dessin) at (0,1) {
\begin{tikzpicture}[scale=0.4]
\node[noeud,scale=0.4] (c) at (0,0) {};
\node[noeud,scale=0.4] (1) at (1,1) {};
\node[noeud,scale=0.4] (2) at (2,1) {};
\node[noeud,scale=0.4] (3) at (3,1) {};
\node[noeud,scale=0.4] (7) at (1,-1) {};
\node[noeud,scale=0.4] (8) at (2,-1) {};
\node[noeud,scale=0.4] (9) at (3,-1) {};
\draw (c)--(1)--(3);
\draw (c)--(7)--(9);
\end{tikzpicture}
};
\node[scale=0.8] (a) at (0,0) {$\grundy(\sstar{3,3})=1$};
\node[scale=0.8] (b) at (0,-0.5) {by Lemma~\ref{lem:124-paths}.};
\end{tikzpicture}
};
\node (s311) at (4.5,-6) {
\begin{tikzpicture}[scale=0.8]
\node (dessin) at (0,1) {
\begin{tikzpicture}[scale=0.4]
\node[noeud,scale=0.4] (c) at (0,0) {};
\node[noeud,scale=0.4] (1) at (1,1) {};
\node[noeud,scale=0.4] (2) at (2,1) {};
\node[noeud,scale=0.4] (3) at (3,1) {};
\node[noeud,scale=0.4] (4) at (1,0) {};
\node[noeud,scale=0.4] (7) at (1,-1) {};
\draw (c)--(1)--(3);
\draw (c)--(7);
\draw (c)--(4);
\end{tikzpicture}
};
\node[scale=0.8] (a) at (0,0) {$\grundy(\sstar{3,1,1})=0$};
\node[scale=0.8] (b) at (0,-0.5) {by Lemma~\ref{lem:124-small-ones}.};
\end{tikzpicture}
};
\node (s222) at (0,-9) {
\begin{tikzpicture}[scale=0.8]
\node (dessin) at (0,1) {
\begin{tikzpicture}[scale=0.4]
\node[noeud,scale=0.4] (c) at (0,0) {};
\node[noeud,scale=0.4] (1) at (1,1) {};
\node[noeud,scale=0.4] (2) at (2,1) {};
\node[noeud,scale=0.4] (4) at (1,0) {};
\node[noeud,scale=0.4] (5) at (2,0) {};
\node[noeud,scale=0.4] (7) at (1,-1) {};
\node[noeud,scale=0.4] (8) at (2,-1) {};
\draw (c)--(1)--(2);
\draw (c)--(7)--(8);
\draw (c)--(5);
\end{tikzpicture}
};
\node[scale=0.8] (a) at (0,0) {$\grundy(\sstar{2,2,2})=1$};
\node[scale=0.8] (b) at (0,-0.5) {by $\mex$.};
\end{tikzpicture}
};
\node (s221) at (-3,-9) {
\begin{tikzpicture}[scale=0.8]
\node (dessin) at (0,1) {
\begin{tikzpicture}[scale=0.4]
\node[noeud,scale=0.4] (c) at (0,0) {};
\node[noeud,scale=0.4] (1) at (1,1) {};
\node[noeud,scale=0.4] (2) at (2,1) {};
\node[noeud,scale=0.4] (4) at (1,0) {};
\node[noeud,scale=0.4] (7) at (1,-1) {};
\node[noeud,scale=0.4] (8) at (2,-1) {};
\draw (c)--(1)--(2);
\draw (c)--(7)--(8);
\draw (c)--(4);
\end{tikzpicture}
};
\node[scale=0.8] (a) at (0,0) {$\grundy(\sstar{2,2,1})=0$};
\node[scale=0.8] (b) at (0,-0.5) {by Lemma~\ref{lem:124-small-ones}.};
\end{tikzpicture}
};
\node (s32) at (-6,-9) {
\begin{tikzpicture}[scale=0.8]
\node (dessin) at (0,1) {
\begin{tikzpicture}[scale=0.4]
\node[noeud,scale=0.4] (c) at (0,0) {};
\node[noeud,scale=0.4] (1) at (1,1) {};
\node[noeud,scale=0.4] (2) at (2,1) {};
\node[noeud,scale=0.4] (3) at (3,1) {};
\node[noeud,scale=0.4] (7) at (1,-1) {};
\node[noeud,scale=0.4] (8) at (2,-1) {};
\draw (c)--(1)--(3);
\draw (c)--(7)--(8);
\end{tikzpicture}
};
\node[scale=0.8] (a) at (0,0) {$\grundy(\sstar{3,2})=0$};
\node[scale=0.8] (b) at (0,-0.5) {by Lemma~\ref{lem:124-paths}.};
\end{tikzpicture}
};
\node (s22) at (3,-9) {
\begin{tikzpicture}[scale=0.8]
\node (dessin) at (0,1) {
\begin{tikzpicture}[scale=0.4]
\node[noeud,scale=0.4] (c) at (0,0) {};
\node[noeud,scale=0.4] (1) at (1,1) {};
\node[noeud,scale=0.4] (2) at (2,1) {};
\node[noeud,scale=0.4] (7) at (1,-1) {};
\node[noeud,scale=0.4] (8) at (2,-1) {};
\draw (c)--(1)--(2);
\draw (c)--(7)--(8);
\end{tikzpicture}
};
\node[scale=0.8] (a) at (0,0) {$\grundy(\sstar{2,2})=2$};
\node[scale=0.8] (b) at (0,-0.5) {by Lemma~\ref{lem:124-paths}.};
\end{tikzpicture}
};

\draw[->,thick] (s333) to (s332);
\draw[->,thick] (s333) to (s331);
\draw[->,thick] (s332) to (s322);
\draw[->,thick] (s332) to (s321);
\draw[->,thick] (s332) to (s33);
\draw[->,thick] (s331) to (s321);
\draw[->,thick] (s331) to (s33);
\draw[->,thick] (s331) to (s311);
\draw[->,thick] (s322) to (s321);
\draw[->,thick] (s322) to (s222);
\draw[->,thick] (s322) to (s221);
\draw[->,thick] (s322) to (s32);
\draw[->,thick] (s222) to (s221);
\draw[->,thick] (s222) to (s22);
\end{tikzpicture}
\caption{Analysis of $\sstar{3,3,3}$. The Grundy values are obtained using the mex rule and Lemmas~\ref{lem:124-paths} and~\ref{lem:124-small-ones}.}
\label{fig:124-S333}
\end{figure}

\begin{lemma}\label{lem:124-S22k}
For all $k\geq0$, we have $\grundy(\sstar{2,2,k})= |\sstar{2,2,k}|\bmod 3$, i.e. $\grundy(\sstar{2,2,k}) = \overline{201}$.
\end{lemma}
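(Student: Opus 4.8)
The plan is to prove the statement by induction on $k$, exactly in the spirit of Lemmas~\ref{lem:124-S11k} and~\ref{lem:124-S12k}: once $k$ is large enough that every move-size residue modulo $3$ is available and every option falls into a family whose Grundy value is already known to equal its size modulo~$3$, the conclusion follows immediately from Lemma~\ref{lem:general}. Note first that $|\sstar{2,2,k}|=k+5$, so the claimed value $|\sstar{2,2,k}|\bmod 3$ is precisely the sequence $\overline{201}$ announced in the statement.

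For the base cases $k\leq 3$ I would invoke the earlier results: $\sstar{2,2,0}=P_5$ is handled by Lemma~\ref{lem:124-paths}, $\sstar{2,2,1}=\sstar{1,2,2}$ appears in Lemma~\ref{lem:124-small-ones}, and $\sstar{2,2,2}$, $\sstar{2,2,3}$ are given explicitly by Lemma~\ref{lem:124-S333} (with values $1$ and $2$, matching $7\bmod 3$ and $8\bmod 3$). For the inductive step $k\geq 4$, I would enumerate the legal moves from $\sstar{2,2,k}$, writing the centre as $c$, the two short branches as $a_1a_2$ and $b_1b_2$, and the long branch as $p_1\cdots p_k$. Removing a single leaf yields either $\sstar{1,2,k}$ (deleting $a_2$ or $b_2$) or $\sstar{2,2,k-1}$ (deleting $p_k$); the only connected size-$2$ deletions that keep the graph connected are a whole short branch, giving $P_{k+3}$, or the last two path vertices, giving $\sstar{2,2,k-2}$; and size-$4$ deletions give $\sstar{2,2,k-4}$.

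The only delicate point, and the step I expect to be the main obstacle, is the complete justification that no size-$4$ move goes through the centre. The key observation is that any connected set $H$ containing $c$ meets each branch in a prefix starting at $c$, so $G-H$ is connected only if $H$ fully contains at least two of the three branches; since the two short branches alone account for $4$ vertices, together with $c$ this already forces $|H|\geq 5>4$. Hence every size-$4$ move only shortens the long path, landing on $\sstar{2,2,k-4}$ (which exists precisely because $k\geq 4$), and internal path deletions are excluded because they disconnect the graph.

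It then remains to verify the two hypotheses of Lemma~\ref{lem:general} with $T=3$ and $\alpha_i=i$. The move sizes $1,2,4$ have residues $\{1,2,1\}=\{1,2\}=\{1,\ldots,T-1\}$ modulo~$3$, so condition~(\ref{eq:legal_moves}) holds. Taking $\mathcal{F}$ to be all paths together with the stars $\sstar{1,2,\cdot}$ and the stars $\sstar{2,2,k'}$ with $k'<k$, every option listed above lies in $\mathcal{F}$, and on $\mathcal{F}$ the Grundy value equals the number of vertices modulo $3$ by Lemmas~\ref{lem:124-paths} and~\ref{lem:124-S12k} and the induction hypothesis. Lemma~\ref{lem:general} then yields $\grundy(\sstar{2,2,k})=|\sstar{2,2,k}|\bmod 3$, completing the induction.
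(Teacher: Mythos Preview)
Your proof is correct and follows exactly the same approach as the paper's: induction on $k$, with base cases $k\leq 3$ drawn from Lemmas~\ref{lem:124-paths}, \ref{lem:124-small-ones} and~\ref{lem:124-S333}, and the inductive step for $k\geq 4$ handled by listing the five options $\sstar{2,2,k-1}$, $\sstar{2,2,k-2}$, $\sstar{2,2,k-4}$, $\sstar{1,2,k}$, $P_{k+3}$ and invoking Lemma~\ref{lem:general}. The only difference is that you spell out why no size-$4$ move can include the centre, a point the paper leaves implicit.
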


\begin{proof}
We proceed by induction on $k$. The cases $k\leq3$ are consequences of Lemma~\ref{lem:124-paths},~\ref{lem:124-small-ones} and~\ref{lem:124-S333}. When $k\geq4$, there are always exactly 5 moves available, leading to $\sstar{2,2,k-1}$, $\sstar{2,2,k-2}$, $\sstar{2,2,k-4}$, $\sstar{1,2,k}$, and $P_{k+3}$. We then conclude by induction and thanks to Lemma~\ref{lem:general}, since all these graphs (see Lemma~\ref{lem:124-paths} and~\ref{lem:124-S12k}) are such that $\grundy(G) = |G| \bmod 3$.
\end{proof}

\begin{lemma}\label{lem:124-S122k}
For all $k\geq0$, we have $\grundy(\sstar{1,2,2,k})= |\sstar{1,2,2,k}| \bmod 3$, i.e. $\grundy(\sstar{1,2,2,k}) = \overline{012}$.
\end{lemma}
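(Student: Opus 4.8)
The plan is to prove the statement by induction on $k$, mirroring the structure of Lemmas~\ref{lem:124-S11k}, \ref{lem:124-S12k} and~\ref{lem:124-S22k}, but with one essential twist. Since $|\sstar{1,2,2,k}| = k+6 \equiv k \bmod 3$, the target value is $\grundy(\sstar{1,2,2,k}) = k \bmod 3$. For the base cases $k \leq 3$: the case $k=0$ is $\sstar{1,2,2}$ and the case $k=1$ is (after reordering rays) $\sstar{1,1,2,2}$, both covered by Lemma~\ref{lem:124-small-ones}; the cases $k=2$ and $k=3$ I would settle by a direct $\mex$ computation, reading off the Grundy values of their options from Lemmas~\ref{lem:124-paths}, \ref{lem:124-small-ones} and~\ref{lem:124-S333}. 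For instance, for $k=2$ the options are $\sstar{2,2,2}$, $\sstar{1,1,2,2}$ and $\sstar{1,2,2}$, giving $\mex\{1,1,0\}=2$, and a similar count yields $0$ for $k=3$.

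For the inductive step $k \geq 4$, the first task is a careful enumeration of the legal moves. The key structural observation is that no legal move of size at most $4$ can remove the central vertex: deleting the centre splits the star into its four ray-suffixes, so connectivity of the remaining graph forces at least three of the four rays to be removed entirely, which already costs $1 + (1+2+2) = 6$ vertices. Consequently every move is either confined to a single ray (removing a suffix of size $1$, $2$ or $4$) or removes a whole branch of length $2$, which yields exactly the option set $\{\sstar{2,2,k},\, \sstar{1,1,2,k},\, \sstar{1,2,k},\, \sstar{1,2,2,k-1},\, \sstar{1,2,2,k-2},\, \sstar{1,2,2,k-4}\}$. Their Grundy values are furnished by Lemma~\ref{lem:124-S22k}, Lemma~\ref{lem:124-S112k}, Lemma~\ref{lem:124-S12k} and the induction hypothesis respectively.

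The main obstacle --- and the reason this lemma cannot simply invoke Lemma~\ref{lem:general} as the preceding ones do --- is the option $\sstar{1,1,2,k}$. By Lemma~\ref{lem:124-S112k} its Grundy value follows the pattern $\overline{231}$, so it does \emph{not} satisfy $\grundy = |\cdot| \bmod 3$; in particular it equals $3$ when $k \equiv 1 \bmod 3$, a value outside $\{0,1,2\}$, which violates the hypothesis of Lemma~\ref{lem:general}. I would therefore replace the black-box application of that lemma by an explicit $\mex$ computation split into the three residue classes of $k \bmod 3$. Writing out the six option values in each class gives the option sets $\{1,2\}$, $\{0,2,3\}$ and $\{0,1\}$ for $k \equiv 0,1,2$ respectively, whose $\mex$ values are $0$, $1$ and $2$ --- exactly $k \bmod 3$. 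The point to stress is that the spurious value $3$ arising when $k \equiv 1$ is harmless: the $\mex$ required there is $1 < 3$, so the extra option lies above the relevant threshold and does not disturb the computation. This residue-by-residue analysis closes the induction.
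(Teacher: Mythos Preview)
Your proposal is correct and follows essentially the same approach as the paper's proof: induction on $k$, base cases $k\le 3$ handled via Lemmas~\ref{lem:124-small-ones} and~\ref{lem:124-S333}/\ref{lem:124-S22k}, the same six-option enumeration for $k\ge 4$, and an explicit residue-by-residue $\mex$ computation in place of Lemma~\ref{lem:general} precisely because the option $\sstar{1,1,2,k}$ has Grundy pattern $\overline{231}$. Your additional remark that removing the centre would cost at least $1+(1+2+2)=6$ vertices is a clean justification of the option list that the paper leaves implicit.
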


\begin{proof}
We proceed by induction on $k$. The cases $k\leq1$ are consequences of Lemma~\ref{lem:124-small-ones}. For $k=2$, 3 moves are available from $\sstar{1,2,2,2}$, leading to $\sstar{1,1,2,2}$, $\sstar{1,2,2}$, and $\sstar{2,2,2}$, and we conclude thanks to Lemma~\ref{lem:124-small-ones} and~\ref{lem:124-S22k}. For $k=3$, 5 moves are available from $\sstar{1,2,2,3}$, leading to $\sstar{1,2,2,2}$, $\sstar{1,1,2,2}$, and $\sstar{1,1,2,3}$,  $\sstar{1,2,3}$, and  $\sstar{2,2,3}$, and we conclude thanks to Lemma~\ref{lem:124-small-ones} and~\ref{lem:124-S22k}. 
When $k\geq4$, there are always exactly 6 moves available, leading to $\sstar{1,2,2,k-1}$, $\sstar{1,2,2,k-2}$, $\sstar{1,2,2,k-4}$, $\sstar{1,1,2,k}$, $\sstar{1,2,k}$, and $\sstar{2,2,k}$. We conclude by induction, and thanks to Lemma~\ref{lem:124-S12k},~\ref{lem:124-S112k}, and~\ref{lem:124-S22k}, since we have:
\begin{itemize}
\item If $k \equiv 0 \bmod 3$, then $\grundy(\sstar{1,2,2,k})$ is computed as $\mex(2,1,2,2,1,2)=0$, and we are done.
\item If $k \equiv 1 \bmod 3$, then $\grundy(\sstar{1,2,2,k})$ is computed as $\mex(0,2,0,3,2,0)=1$, and we are done.
\item If $k \equiv 2 \bmod 3$, then $\grundy(\sstar{1,2,2,k})$ is computed as $\mex(1,0,1,1,0,1)=2$, and we are done.
\end{itemize}
\end{proof}

\begin{lemma}\label{lem:124-S1111k}
For all $k\geq0$, we have $\grundy(\sstar{1,1,1,1,k})= |\sstar{1,1,1,1,k}|+1 \bmod 3$, i.e. $\grundy(\sstar{1,1,1,1,k}) = \overline{012}$.
\end{lemma}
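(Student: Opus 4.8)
The plan is to mirror the inductive scheme of Lemma~\ref{lem:124-S111k}, proceeding by induction on $k$. First I would rewrite the target in a usable form: since $|\sstar{1,1,1,1,k}| = k+5$, the claimed value $|\sstar{1,1,1,1,k}|+1 \bmod 3$ is simply $k \bmod 3$, so it suffices to show $\grundy(\sstar{1,1,1,1,k}) = k \bmod 3$ for all $k$. The base cases $k \in \{0,1,2,3\}$ would be handled separately: $\grundy(\sstar{1,1,1,1})=0$ is given by Lemma~\ref{lem:124-small-ones}, while for $k \in \{1,2,3\}$ the long path is too short to admit the generic moves, so I would list the (few) available moves by hand and apply the $\mex$ rule, using the already-established values of $\sstar{1,1,1,k}$ (Lemma~\ref{lem:124-S111k}) and of the smaller stars in this family, checking that one indeed gets $1,2,0$ respectively.

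The heart of the argument is the move analysis for $k \geq 4$. Here the key structural observation --- and the reason this family behaves differently from $\sstar{1,1,1,k}$ --- is that the center of the star cannot be removed by any legal move: deleting the center disconnects the four leaves from the path, so to keep the remainder connected one would have to delete the center together with all four leaves, i.e.\ five vertices, which is not a legal move size since $5 \notin \{1,2,4\}$. Consequently there is \emph{no} move collapsing the star to a bare path (contrast this with $\sstar{1,1,1,k}$, where removing the center and its three leaves is a legal $4$-vertex move to $P_k$). I would therefore argue that the only legal moves are: removing a single leaf, reaching $\sstar{1,1,1,k}$; and removing $1$, $2$, or $4$ vertices from the free end of the long path, reaching $\sstar{1,1,1,1,k-1}$, $\sstar{1,1,1,1,k-2}$, and $\sstar{1,1,1,1,k-4}$ respectively. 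No interior path removal, nor any removal at the path-end adjacent to the center, nor any size-$2$ or size-$4$ removal meeting the center, preserves connectivity.

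With this move set in hand, the induction closes by a routine $\mex$ computation in each residue class. Feeding in $\grundy(\sstar{1,1,1,k}) = \overline{103}$ from Lemma~\ref{lem:124-S111k} and $\grundy(\sstar{1,1,1,1,j}) = j \bmod 3$ for $j<k$ from the induction hypothesis, one obtains
$$\grundy(\sstar{1,1,1,1,k}) = \begin{cases} \mex(1,2,1,2)=0 & \mbox{if } k \equiv 0 \bmod 3,\\ \mex(0,0,2,0)=1 & \mbox{if } k \equiv 1 \bmod 3,\\ \mex(3,1,0,1)=2 & \mbox{if } k \equiv 2 \bmod 3,\end{cases}$$
which is exactly $k \bmod 3$, completing the induction.

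The main obstacle is precisely the move enumeration of the second paragraph: one must carefully verify that every connected subgraph of size $2$ or $4$ that meets the center leaves a disconnected remainder, so that the center is effectively frozen. Note also that, although several surrounding lemmas conclude via Lemma~\ref{lem:general}, that shortcut is \emph{not} available here, because one of the options, $\sstar{1,1,1,k}$, takes Grundy value $3$ (outside $\{0,1,2\}$) when $k \equiv 2 \bmod 3$, violating the hypothesis $\grundy(G)=|G|\bmod 3$ of that lemma. The explicit case-by-case $\mex$ computation, exactly as in Lemmas~\ref{lem:124-S111k} and~\ref{lem:124-S112k}, is the right tool.
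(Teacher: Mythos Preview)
Your proposal is correct and follows essentially the same route as the paper: induction on $k$, base cases $k\le 3$ checked by hand, and for $k\ge 4$ the four options $\sstar{1,1,1,k}$, $\sstar{1,1,1,1,k-1}$, $\sstar{1,1,1,1,k-2}$, $\sstar{1,1,1,1,k-4}$ fed into a case-by-case $\mex$ (your tuples are the paper's tuples in a different order). Your extra remarks---that the center is frozen because removing it with at most four neighbours always disconnects the graph, and that Lemma~\ref{lem:general} is unavailable because the option $\sstar{1,1,1,k}$ can have Grundy value $3$---are exactly the structural points the paper relies on, the latter being stated explicitly just after the proof.
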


\begin{proof}
We proceed by induction on $k$. The case $k=0$ is a consequence of Lemma~\ref{lem:124-small-ones}. For $k=1$, 1 move is available from $\sstar{1,1,1,1,1}$, leading to $\sstar{1,1,1,1}$, and we conclude again thanks to Lemma~\ref{lem:124-small-ones}. For $k=2$, 3 moves are available from $\sstar{1,1,1,1,2}$, leading to $\sstar{1,1,1,1,1}$, $\sstar{1,1,1,1}$, and $\sstar{1,1,1,2}$, and we conclude thanks to Lemma~\ref{lem:124-small-ones} (and thanks to the case $k=0$ above). The case $k=3$ is not harder: 3 moves lead to $\sstar{1,1,1,1,2}$, $\sstar{1,1,1,1,1}$, and $\sstar{1,1,1,3}$, and we can conclude thanks to Lemma~\ref{lem:124-small-ones} and thanks to the cases $k\leq2$ above. When $k\geq4$, there are always exactly 4 moves available, leading to $\sstar{1,1,1,1,k-1}$, $\sstar{1,1,1,1,k-2}$, $\sstar{1,1,1,1,k-4}$, and $\sstar{1,1,1,k}$. We conclude by induction, and thanks to Lemma~\ref{lem:124-S111k}, since we have:
\begin{itemize}
\item If $k \equiv 0 \bmod 3$, then $\grundy(\sstar{1,1,1,1,k})$ is computed as $\mex(2,1,2,1)=0$, and we are done.
\item If $k \equiv 1 \bmod 3$, then $\grundy(\sstar{1,1,1,1,k})$ is computed as $\mex(0,2,0,0)=1$, and we are done.
\item If $k \equiv 2 \bmod 3$, then $\grundy(\sstar{1,1,1,1,k})$ is computed as $\mex(1,0,1,3)=2$, and we are done.
\end{itemize}
\end{proof}

Note that Lemma~\ref{lem:general} is not invoked in the two previous proofs. This is due to the cases $\sstar{1,1,2,k}$ and $\sstar{1,1,1,k}$, which appear in the general case $k\geq4$, but do not satisfy the properties $\grundy(G) = |G| \bmod 3$ and $\grundy(G) = |G|+1 \bmod 3$, respectively.

We are now ready to prove Theorem~\ref{thm:124}.

\begin{proof}[Proof of Theorem~\ref{thm:124}]
The proof is similar to that of Theorem~\ref{thm:123}. We denote A (for Alice) the first player, and B (for Bob) the second player, and we will show by induction that for any subdivided star $S=\sstar{\ell_1,\ldots,\ell_t,n}$ with $\ell_1,\ldots,\ell_t,n \geq 0$, then $S + S'$ is always a $\outcomeP$-position, with $S'$ defined as $\sstar{\ell_1,\ldots,\ell_t,n+3}$.

Let $b(S)$ be the number of nontrivial branches of $S$, that is to say $b(S) = |\{ i \mid \ell_i >0\}|$ if $n=0$ and $b(S) = |\{ i \mid \ell_i >0\}| + 1$ otherwise.

If $b(S)=0$ then we are trivially done thanks to Lemma~\ref{lem:124-paths} since $S=P_1$ and $S'=P_4$.

If $b(S)=1$, then we conclude again by Lemma~\ref{lem:124-paths} since $S$ and $S'$ are both paths.

If $b(S)\geq 2$, then several cases follow.

If A plays in $S'$ so as to leave $S + \sstar{\ell_1,\ldots,\ell_t,n+3-k}$, with $k\in\{1,2,4\}$, then B replies by taking $3-k$ vertices in $S'$ if $k\in\{1,2\}$, so as to leave $S + \sstar{\ell_1,\ldots,\ell_t,n} = S + S$, which is a $\outcomeP$-position, and if $k=4$ then B replies by taking 1 vertex in $S$, so as to leave $\sstar{\ell_1,\ldots,\ell_t,n-1} + \sstar{\ell_1,\ldots,\ell_t,n-1}$, which is a $\outcomeP$-position.

If A plays elsewhere in $S'$, then B can always copy A's move on $S$. For instance, if A removes $k\geq \ell_1$ vertices in $S'$, so as to leave $S \cup \sstar{\ell_1-k,\ldots,\ell_t,n+3}$, then B removes $k$ vertices in $S$ so as to leave $\sstar{\ell_1-k,\ldots,\ell_t,n} \cup \sstar{\ell_1-k,\ldots,\ell_t,n+3}$. When B replies by mimicking A's move on $S$, B manages to leave $T + T'$ to A, with $|T| < |S|$, and we thus can conclude by induction that this is a $\outcomeP$-position.

If A plays in $S$, then we have to be careful. If A's move does not involve taking the center of $S$, then we are done, since B is again always able to copy A's move in $S'$, so as to leave $T + T'$ to A, with $|T| < |S|$, and we again conclude by induction that this is a $\outcomeP$-position.

The tricky cases are the ones where A takes the center of $S$. Indeed, in this case, B can not always simply copy A's move in $S'$, and we have to carry a more detailed analysis. For instance, if $S=\star{1,1,1,2}$ and $S'=\sstar{1,1,1,2,3}$, and if A removes 4 vertices so as to leave $P_2 + \sstar{1,1,1,2,3}$, then B can not mimic A's move in $S'$ since it would disconnect $\sstar{1,1,1,2,3}$, so as to leave $P_2 + P_2 + P_3$. In other words, A's move copied in $S'$ is not always a legal move.

Let us assume now that A's move in $S$ can not be copied by player B in $S'$. In this case, A was able to remove the centre of $S$, and this implies that $S$ has at most 4 nontrivial branches, and that $S'$ has at most one more nontrivial branch than $S$. Note also that A removed strictly more than 1 vertex to $S$, and exactly $b(S)-1$ branches of $S$ (if A empties $S$ then B can trivially copy A's move on $S'$ so as to leave $P_3$). Three cases and several subcases follow. We will denote by $k$ the number of vertices removed by A. 

\textbf{Case (i): $b(S)=2$.}
Without loss of generality let us assume $S=\sstar{\ell_1,\ell_2}$ with $1\leq\ell_1 \leq \ell_2$. As in the case of Theorem~\ref{thm:123}, if there is a long enough branch in $S$ (w.r.t the number of vertices removed by A), then we will proceed by a mirror argument, since $S$ is actually a path $P_{\ell_1 + \ell_2 +1}$. More precisely, if $k\leq\ell_2$, then we may consider A actually played so as to leave $\sstar{\ell_1,\ell_2-k} + S'$, and in this case player B can copy A's move on $S'$, and we are done.

Let us assume now that $k>\ell_2$. If $k=2$, then $S = \sstar{1,1}$, and we may have either $S'=\sstar{1,4}=P_6$ or $S'=\sstar{1,1,3}$. If $S'=\sstar{1,4}=P_6$, then A may remove 2 vertices from $P_6$ so as to leave $P_1 + P_4$, which is a $\outcomeP$-position. If $S'=\sstar{1,1,3}$, then A may remove 2 vertices from $\sstar{1,1,3}$ so as to leave $P_1 + \sstar{1,1,1}$, which is a $\outcomeP$-position since we know from Lemma~\ref{lem:124-small-ones} that $\grundy(\sstar{1,1,1}) = 1 = \grundy(P_1)$.

If $k=4$, then there are several subcases.

\begin{itemize}
\item If $S=\sstar{1,2}$, then A leaves $\emptyset +S'=S'$ and we may have either $S'=P_7$ or $S'=\sstar{1,2,3}$. If $S'=P_7$, then A may remove 1 vertex from $P_7$ so as to leave $P_6$, which is a $\outcomeP$-position. If $S'=\sstar{1,2,3}$, then A may remove 1 vertex from $\sstar{1,2,3}$ so as to leave $P_6$, which is a $\outcomeP$-position.
\item If $S=\sstar{1,3}$, then A leaves $P_1 +S'$ and we may have either $S'=P_8$ or $S'=\sstar{1,3,3}$. If $S'=P_8$, then A may remove 1 vertex from $P_8$ so as to leave $P_1 + P_7$, which is a $\outcomeP$-position. If $S'=\sstar{1,3,3}$, then A may remove 1 vertex from $\sstar{1,3,3}$ so as to leave $P_1 + \sstar{1,2,3}$, which is a $\outcomeP$-position thanks to Lemma~\ref{lem:124-small-ones}.
\item If $S=\sstar{2,2}$, then A leaves $P_1 +S'$ and we may have either $S'=P_8$ or $S'=\sstar{2,2,3}$. If $S'=P_8$, then A may remove 1 vertex from $P_8$ so as to leave $P_1 + P_7$, which is a $\outcomeP$-position. If $S'=\sstar{2,2,3}$, then A may remove 1 vertex from $\sstar{2,2,3}$ so as to leave $P_1 + \sstar{2,2,2}$, which is a $\outcomeP$-position thanks to Lemma~\ref{lem:124-S333}.
\item If $S=\sstar{2,3}$, then A leaves $P_2 +S'$ and we may have either $S'=P_9$ or $S'=\sstar{2,3,3}$. If $S'=P_9$, then A may remove 1 vertex from $P_9$ so as to leave $P_2 + P_8$, which is a $\outcomeP$-position. If $S'=\sstar{2,3,3}$, then A may remove 1 vertex from $\sstar{2,3,3}$ so as to leave $P_2 + \sstar{2,2,3}$, which is a $\outcomeP$-position thanks to Lemma~\ref{lem:124-S333}.
\item If $S=\sstar{3,3}$, then A leaves $P_3 +S'$ and we may have either $S'=P_{10}$ or $S'=\sstar{3,3,3}$. If $S'=P_{10}$, then A may remove 1 vertex from $P_{10}$ so as to leave $P_3 + P_9$, which is a $\outcomeP$-position. If $S'=\sstar{3,3,3}$, then A may remove 1 vertex from $\sstar{3,3,3}$ so as to leave $P_3 + \sstar{2,3,3}$, which is a $\outcomeP$-position thanks to Lemma~\ref{lem:124-S333}.
\end{itemize}

\textbf{Case (ii): $b(S)=3$.}
Let us denote $S=\sstar{\ell_1,\ell_2,\ell_3}$. In this case, we must have $k=4$. Two subcases follow.

\begin{itemize}
\item If $S = \sstar{1,1,\ell_3}$, then A leaves $P_{\ell_3-1} + S'$, and we may have $S'=\sstar{1,1,3,\ell_3}$, $S'=\sstar{1,1,\ell_3+3}$, or $S'=\sstar{1,4,\ell_3}$.
\begin{itemize}
\item If $S'=\sstar{1,1,3,\ell_3}$, then B's answer depends on $\ell_3 \bmod 3$. If $\ell_3 \equiv 0\bmod 3$, B may reply by taking 1 vertex in $S'$, so as to leave $P_{\ell_3-1} + \sstar{1,1,2,\ell_3}$. Indeed, from Lemma~\ref{lem:124-S112k}, we know that $\grundy(\sstar{1,1,2,\ell_3})=2=\grundy(P_{\ell_3-1})$, thus $P_{\ell_3-1} + \sstar{1,1,2,\ell_3}$ is a $\outcomeP$-position. If $\ell_3 \equiv 1\bmod 3$, B may reply by taking 2 vertices in $S'$, so as to leave $P_{\ell_3-1} + \sstar{1,1,1,\ell_3}$. Indeed, from Lemma~\ref{lem:124-S111k}, we know that $\grundy(\sstar{1,1,1,\ell_3})=0=\grundy(P_{\ell_3-1})$, thus $P_{\ell_3-1} + \sstar{1,1,1,\ell_3}$ is a $\outcomeP$-position. If $\ell_3 \equiv 2\bmod 3$, B may reply by taking 1 vertex in $S'$, so as to leave $P_{\ell_3-1} + \sstar{1,1,2,\ell_3}$. Indeed, from Lemma~\ref{lem:124-S112k}, we know that $\grundy(\sstar{1,1,2,\ell_3})=1=\grundy(P_{\ell_3-1})$, thus $P_{\ell_3-1} + \sstar{1,1,2,\ell_3}$ is a $\outcomeP$-position.
\item If $S'=\sstar{1,1,\ell_3+3}$, then B removes 4 vertices from $S'$, so aas to leave $P_{\ell_3-1}+P_{\ell_3-1}$, which is a $\outcomeP$-position.
\item If $S'=\sstar{1,4,\ell_3}$, then B removes 4 vertices from $S'$, so aas to leave $P_{\ell_3-1}+P_{\ell_3+2}$, which is a $\outcomeP$-position.
\end{itemize} 
\item If $S = \sstar{1,2,\ell_3}$, then A leaves $P_{\ell_3} + S'$, and we may have $S'=\sstar{1,2,3,\ell_3}$, $S'=\sstar{1,2,\ell_3+3}$, $S'=\sstar{1,5,\ell_3}$, or $S'=\sstar{2,4,\ell_3}$. If $S'=\sstar{1,2,3,\ell_3}$, B can reply by taking 1 vertex in $S'$, so as to leave $P_{\ell_3} + \sstar{1,2,2,\ell_3}$, which is a $\outcomeP$-position. Indeed, from Lemma~\ref{lem:124-S122k}, we know that $\grundy(\sstar{1,1,2,\ell_3})=\grundy(P_{\ell_3})= \ell_3 \bmod 3$. If $S'=\sstar{1,2,\ell_3+3}$, then B may reply by removing 1 vertex from $S'$, so as to leave $P_{\ell_3} + P_{\ell_3+3}$, which is a $\outcomeP$-position. If $S'=\sstar{1,5,\ell_3}$, then B may reply by removing 1 vertex from $S'$, so as to leave $P_{\ell_3} + P_{\ell_3+6}$, which is a $\outcomeP$-position. Finally, if $S'=\sstar{2,4,\ell_3}$, then B may reply by removing 4 vertices from $S'$, so as to leave $P_{\ell_3} + P_{\ell_3+3}$, which is a $\outcomeP$-position. 
\end{itemize}

\textbf{Case (iii): $b(S)=4$.}
In this case, we must have $k=4$ and $S=\sstar{1,1,1,\ell_4}$, and A leaves $P_{\ell_4} + S'$, with $S'=\sstar{1,1,1,3,\ell_4}$, $S'=\sstar{1,1,1,\ell_4+3}$, or $S'=\sstar{1,1,4,\ell_4}$. If $S=\sstar{1,1,1,3,\ell_4}$, then a good answer for B is then to remove 2 vertices from $S'$, so as to leave $P_{\ell_4} + \sstar{1,1,1,1,\ell_4}$, which is a $\outcomeP$-position, since we know from Lemma~\ref{lem:124-S1111k} that $\grundy(\sstar{1,1,1,1,\ell_4})=\grundy(P_{\ell_4})= \ell_4 \bmod 3$. If $S=\sstar{1,1,1,\ell_4+3}$, then B may remove 4 vertices from $S'$, so as to leave $P_{\ell_4} + P_{\ell_4+3}$, which is a $\outcomeP$-position. If $S=\sstar{1,1,4,\ell_4}$, then B may remove 4 vertices from $S'$, so as to leave $P_{\ell_4} + \sstar{1,1,\ell_4}$, which is a $\outcomeP$-position, since we know from Lemma~\ref{lem:124-S11k} that $\grundy(\sstar{1,1,\ell_4})=P_{\ell_4}=\ell_4 \bmod 3$.
\end{proof}

\section{Conclusion}

In this paper, we derived results for connected subtraction games in graphs. Our main results deal with graphs to which we append paths, for which we were able to derive a very general result showing that there is always ultimate periodicity, in the sense that, for any finite set $L$, there exists two integers $k_0$ and $T$ such that
$$\grundy_L(\app{G}{u}{k+T}) = \grundy_L(\app{G}{u}{k}),$$
whenever $k\geq k_0$ (Theorem~\ref{thm:general}). Is it true that the period $T$ is the same as the period of the game played on paths?

We then managed to prove pure periodicity results in some particular cases. We showed that, for some classes of graphs and sets $L$, equation $\grundy_L(\app{G}{u}{k+T}) = \grundy_L(\app{G}{u}{k})$ holds starting from $n_0=0$. This is the case, for instance, for simple stars when playing \csg{\{1,\ldots,N\}} (Proposition~\ref{prop:Stk_general}), for subdivided stars when playing \csg{\{1,2,3\}} or \csg{\{1,2,4\}} (Theorems~\ref{thm:123} and~\ref{thm:124}).

However, even in graphs as structurally simple as subdivided stars, pure periodicity is not the general setting, even if the game is purely periodic when played in paths. We proved, for instance, in Theorem~\ref{thm:S1lk} that we only have ultimate periodicity in subdivided stars of type $\sstar{1,k,l}$ for the game \csg{\{1,\ldots,N\}}. For instance, as a corollary of Theorem~\ref{thm:S1lk}, we have $\grundy(\sstar{1,2,8})=10$ whereas $\grundy(\sstar{1,2+9k,8})=6$ for all $k\geq1$, for the game \csg{\{1,\ldots,8\}}. Note that this game is purely periodic when played in paths, and that subdivided stars of the form $\sstar{1,k,l}$ can be seen as paths to which we attached a vertex. Hence there is a complexity gap in the computation of the Grundy values when considering subdivided stars instead of paths. This complexity gap between paths and stars is also illustrated by Observation~\ref{obs:plus_M}, which states that, for subdivided stars -- and even for stars -- the Grundy sequence of \csg{I_N \cup \{M\}} may be different from the one of \csg{I_N}, for $M \not\equiv 0 \bmod{N+1}$.



\end{document}